\documentclass[11pt]{article}

\usepackage{graphicx}
\usepackage{amsfonts}
\usepackage{amscd}
\usepackage{indentfirst}
\usepackage{amssymb, amsmath,amsthm, amsgen, amstext, amsbsy, amsopn,color}

\newtheorem{thm}{Theorem}[section]
\newtheorem{lemma}[thm]{Lemma}
\newtheorem{prop}[thm]{Proposition}
\newtheorem{cor}[thm]{Corollary}

\newtheorem{rem}[thm]{Remark}

\newtheorem{question}[thm]{Question}

\newcommand{\R}{{\mathbb{R}}}

\newcommand{\T}{{\mathbb{T}}}
\newcommand{\Z}{{\mathbb{Z}}}
\newcommand{\N}{{\mathbb{N}}}
\newcommand{\C}{{\mathbb{C}}}
\newcommand{\SP}{{\mathbb{S}}}

\newcommand{\D}{{\mathbb{D}}}

\newcommand{\im}{{\it Im\, }}
\newcommand{\gobp}{{\mathfrak{bp}}}

\newcommand{\cD}{{\mathcal{D}}}

\newcommand{\cF}{{\mathcal{F}}}

\newcommand{\cI}{{\mathcal{I}}}
\newcommand{\cJ}{{\mathcal{J}}}
\newcommand{\cK}{{\mathcal{K}}}

\newcommand{\cM}{{\mathcal{M}}}
\newcommand{\cN}{{\mathcal{N}}}

\newcommand{\hF}{{\widehat{F}}}
\newcommand{\hG}{{\widehat{G}}}

\newcommand{\hJ}{{\widehat{J}}}
\newcommand{\hK}{{\widehat{K}}}
\newcommand{\hL}{{\widehat{L}}}
\newcommand{\hM}{{\widehat{M}}}

\newcommand{\hX}{{\widehat{X}}}
\newcommand{\hY}{{\widehat{Y}}}

\newcommand{\hp}{{\hat{p}}}

\newcommand{\bx}{{\bf x}}
\newcommand{\by}{{\bf y}}
\newcommand{\bp}{{\it bp}}

\newcommand{\halpha}{{\hat{\alpha}}}
\newcommand{\homega}{{\hat{\omega}}}

\newcommand{\hgamma}{{\hat{\gamma}}}

\newcommand{\hGamma}{{\widehat{\Gamma}}}

\newcommand{\goS}{{\mathfrak{S}}}

\newcommand{\tcM}{{\widetilde{\cM}}}

\newcommand{\df}{{\hbox{\it def}\,}}
\newcommand{\Flux}{{\hbox{\it Flux}}}
\newcommand{\modd}{{\footnotesize\,\rm mod}\,}

\newcommand{\Int}{\hbox{\rm Int}\, }
\newcommand{\Span}{\hbox{\rm Span}}
\newcommand{\Ker}{\hbox{\rm Ker}\,}
\newcommand{\Image}{\hbox{\rm Im}\,}
\newcommand{\Lie}{\hbox{\it Lie}\, }
\newcommand{\Lied}{\textit{Lie}^*\, }
\newcommand{\Diff}{\hbox{\rm Diff}}


\newcommand{\Qed}{\ \hfill \qedsymbol \bigskip}


\DeclareMathOperator{\sgrad}{sgrad}

\begin{document}

\title{Lagrangian isotopies and symplectic function theory }

\author{\textsc Michael Entov$^{1}$,\ Yaniv Ganor$^{2}$,\ Cedric Membrez$^{3}$}

\footnotetext[1]{Partially supported by the Israel Science Foundation grant
$\#$ 1096/14.}
\footnotetext[2]{Partially supported by European Research Council Advanced grant 338809.}
\footnotetext[3]{Partially supported by the Swiss National Science Foundation
grant 155540 and the European Research Council Advanced grant 338809.}

\date{\today}

\maketitle

\begin{abstract}

We study two related invariants of Lagrangian submanifolds in symplectic
manifolds. For a Lagrangian torus these invariants are functions on the
first cohomology of the torus.

The first invariant is of topological nature and is related to the study of
Lagrangian isotopies with a given Lagrangian flux. More specifically, it
measures the length of straight paths in the first cohomology that can be
realized as the Lagrangian flux of a Lagrangian isotopy.

The second invariant is of analytical nature and comes from symplectic
function theory. It is defined
for Lagrangian submanifolds admitting fibrations over a circle and has a
dynamical interpretation.

We partially compute these invariants for certain Lagrangian tori.

\end{abstract}

\newpage
\tableofcontents
\newpage


\section{Introduction}
\label{sec-intro}

In this paper we study two related invariants of Lagrangian submanifolds
which are invariant under symplectomorphisms of the ambient symplectic
manifolds.

Let $(M,\omega)$ be a symplectic manifold (possibly with boundary)
and $L\subset (M,\omega)$ a
closed Lagrangian submanifold. Let $\iota: L\to M$ denote here and further
on the natural inclusion of $L$ in $M$.

The first invariant of $L$ comes from the consideration of Lagrangian
isotopies of $L$ with a given Lagrangian flux path in $H^1(L; \R)$.
Namely, recall that if $\psi:=\{ \psi_t: L\to M\}_{0\leq t\leq T}$,
$\psi_0=\iota$, is a Lagrangian isotopy, one can associate to it a
{\it Lagrangian flux path}
\[
\{\Flux(\psi)_t\}_{0\leq t\leq T} \subset H^1 (L;\R), \ \ \Flux(\psi)_0 = 0,
\]
as follows: given a closed curve $C \subset L$ and $t\in [0,T]$, consider
the trace of $C$ under the Lagrangian isotopy $\{ \psi_\tau: L\to M\}_{
0 \leq \tau \leq t}$ and integrate $\omega$ over the resulting surface.
The resulting (real) number depends only on the homology class of $C$
in $H_1 (L)$ (see \cite{Solomon}). The numbers obtained for all $C$ in
this way are the periods of a uniquely defined {\it Lagrangian flux}
class $\Flux(\psi)_t \in H^1 (L;\R)$.

The notion of Lagrangian flux immediately raises the following question.

\begin{question}
Which paths in $H^1 (L;\R)$ based at 0 have the form $\{ \Flux(\psi)_t\}$
for some Lagrangian isotopy $\psi$ of $L$?
\end{question}

In general, this seems to be a very difficult question. In this paper
we investigate its weaker version:

\begin{question}
Which {\bf straight} paths in $H^1 (L;\R)$ based at 0 have the
form $\{ \Flux(\psi)_t\}$ for some Lagrangian isotopy $\psi$ of $L$?
\end{question}

The {\it deformation invariant} of $L$ is a function $\df_L: H^1 (L;\R)
\to (0,+\infty]$ defined as follows. Given $\alpha\in H^1 (L;\R)$, set
\[
\df_L (\alpha):= \sup T,
\]
where we take the supremum over all $T \in \R_{\geq 0}$ for which there
exists a Lagrangian isotopy $\psi = \{ \psi_t: L\to M\}_{0 \leq t \leq T}$,
$\psi_0=\iota$, of $L$ such that
\begin{equation}
\label{eqn-flux-path-linear}
\Flux(\psi)_t = -t \alpha\ \textrm{ for all }\ 0\leq t\leq T,
\end{equation}
or, in other words, the path $\{ - t \alpha\}_{0\leq t\leq T}$ in
$H^1 (L;\R)$ is the path $\{ \Flux(\psi)_t\}$ for some Lagrangian isotopy
$\psi$ of $L$. It is easy to show that $\df_L (\alpha)$ is always
non-zero -- thus, $\df_L (\alpha)$ takes values in $(0,+\infty]$.

In case $(M,\omega=d\lambda)$ is an exact symplectic manifold the invariant
$\df_L$ can be related to the notion of a symplectic shape studied in
\cite{Eliashberg, Sikorav2}. The symplectic shape of $(M,d\lambda)$ associated
to $L$ and a homomorphism $h: H^1 (M;\R)\to H^1 (L;\R)$ is the subset of $H^1 (L;\R)$ formed by the Liouville
classes $[e^*\lambda] \in H^1(L; \R)$ for all possible Lagrangian embeddings
$e: L \to (M,d\lambda)$ such that $h=e^*: H^1 (M;\R)\to H^1 (L;\R)$.

Assume $M=\T^n\times U\subset \T^n\times \R^n
= T^* \T^n$, where $U\subset \R^n$ is a connected open set, and $\lambda$ is the
standard Liouville form on $\T^n\times \R^n$.
Let $\bx \in U$ and let $L := \T^n \times \{\bx\}\subset \T^n\times U$
be the corresponding Lagrangian submanifold.
Let $h: H^1 (\T^n\times U;\R)\to H^1 (\T^n\times \{\bx\};\R)$ be induced by the embedding
$\T^n\times \{ \bx\} \hookrightarrow \T^n\times U$. Then the Benci-Sikorav theorem \cite{Sikorav2}
(cf. \cite{Eliashberg}) says that the shape of $(\T^n\times U,d\lambda)$ associated to
$L$ and $h$ is $U\subset \R^n\cong H^1 (\T^n\times \{\bx\};\R)$. Rephrasing this result in terms
of the deformation invariant gives us the following theorem.
\begin{thm}
\label{thm-def-shapes}
With $U$ and $(M=\T^n\times U, d\lambda)$ and $L = \T^n \times \{\bx\}$ as above,
for all $\alpha \in H^1(L;\R)$
\[
\df_L (\alpha) = \sup \{ \ t > 0 \ | \ \bx - t \alpha \in U \ \}.
\]
\end{thm}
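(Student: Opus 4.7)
The plan is to reduce the theorem to the Benci--Sikorav shape result already recalled in the excerpt, via the standard identification of Lagrangian flux with the change of the Liouville class. First I would verify that for any Lagrangian isotopy $\psi = \{\psi_t\}_{0 \le t \le T}$ with $\psi_0 = \iota$, generated by a family of vector fields $X_t$ along $\psi_t$, Cartan's formula yields
\[
\frac{d}{dt}\psi_t^*\lambda = d(\psi_t^*\iota_{X_t}\lambda) + \psi_t^*\iota_{X_t}\omega,
\]
whose second summand represents $\frac{d}{dt}\Flux(\psi)_t$ in cohomology. Integrating in $t$ gives $[\psi_t^*\lambda] = [\iota^*\lambda] + \Flux(\psi)_t$ in $H^1(L;\R)$. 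Under the standard identification $H^1(L;\R) = H^1(\T^n;\R) \cong \R^n$ via the basis $[d\theta_1],\dots,[d\theta_n]$, one has $[\iota^*\lambda] = \bx$, so a linear flux path $\Flux(\psi)_t = -t\alpha$ is equivalent to saying $[\psi_t^*\lambda] = \bx - t\alpha$ for every $t\in[0,T]$.

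For the upper bound I would observe that each $\psi_t$ is smoothly isotopic to $\iota$, hence induces the same map $\psi_t^* = \iota^* = h$ on $H^1(M;\R)\to H^1(L;\R)$. Thus $\psi_t$ is an admissible embedding for the symplectic shape of $(M,d\lambda)$ associated to $L$ and $h$, and the Benci--Sikorav theorem forces its Liouville class $\bx - t\alpha$ to lie in $U$. Since this must hold at every $t\in[0,T]$, the existence of the isotopy on $[0,T]$ implies $\bx - s\alpha \in U$ for all $s\in[0,T]$, which yields $\df_L(\alpha) \le \sup\{t>0 \mid \bx - t\alpha \in U\}$, the right-hand side of the theorem.

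For the lower bound the construction is essentially trivial: whenever $\bx - t\alpha \in U$ for every $t\in[0,T]$, define
\[
\psi_t \colon L \to M, \qquad \psi_t(\theta) = (\theta, \bx - t\alpha),
\]
identifying $\alpha \in H^1(L;\R)$ with its coordinate vector in $\R^n$. Then $\psi_t(L) = \T^n\times\{\bx - t\alpha\}$ is a Lagrangian submanifold of $M$ with $[\psi_t^*\lambda] = \bx - t\alpha$, and by the computation of the first paragraph $\Flux(\psi)_t = -t\alpha$ as desired, giving $\df_L(\alpha) \ge T$.

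There is no real obstacle here: all the analytical substance is packaged in the Benci--Sikorav shape theorem, which is taken as input. The only genuine piece of work is the Cartan-formula identification of flux with the change of the Liouville class, and the lower bound requires nothing beyond the explicit translation isotopy above.
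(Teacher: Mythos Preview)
Your proof is correct and follows exactly the same route as the paper: identify the Lagrangian flux with the change of Liouville class, obtain the lower bound from the explicit translation isotopy $\psi_t(\theta)=(\theta,\bx-t\alpha)$, and obtain the upper bound by applying the Benci--Sikorav shape theorem to each $\psi_t$. You simply supply more detail (the Cartan-formula computation and the check that $\psi_t^*=h$ on $H^1$) than the paper's two-line sketch.
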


To prove Theorem~\ref{thm-def-shapes} note that in this setting
$\Flux(\psi)_t$ can be represented as the difference of the Liouville classes of
$\psi_t (L)$ and $L$:
\[
\Flux(\psi)_t = \psi_t^* \lambda - \psi_0^*\lambda.
\]
Then the inequality $\df_L (\alpha) \geq \sup \{ \ t > 0 \ | \ \bx - t \alpha \in U \ \}$
follows from the existence of an obvious Lagrangian isotopy $\psi_t (\T^n \times \{\bx\})
= \T^n\times \{ \bx-t\alpha\}$, while the opposite inequality follows directly from the
Benci-Sikorav theorem.

The function $1/\df_L$ is obviously non-negatively homogeneous.
In the case when $U$ is star-shaped with respect to $\bx$,  it
is, by Theorem~\ref{thm-def-shapes}, the Min\-kow\-ski functional associated
to $U$ -- which in turn completely determines $U$.

The study of $\df_L$ can also be viewed as a relative analogue of a deformation
problem for symplectic forms on closed symplectic manifolds considered in
\cite{Polterovich}:
how far can one deform a symplectic form $\omega$ within a family of symplectic
forms such that the cohomology class of the deformed form changes
along a straight ray originating at $[\omega]$ and such that its restriction
to a given $\omega$-symplectic submanifold remains symplectic?
To see how a relative version of this question is related to $\df_L$
we denote by $[\omega]_L\in H^2 (M,L;\R)$ and $[\omega]_t^\psi\in H^2 (M,\psi_t (L);\R)$
the relative symplectic area cohomology classes of, respectively,
$L$ and $\psi_t (L)$. The isotopy $\psi$ defines a canonical isomorphism
$H^2 (M,L;\R)\cong H^2 (M,\psi_t (L);\R)$
and thus $[\omega]_t^\psi$ can be viewed as an element of $H^2 (M,L;\R)$.
Let $\partial: H^1(L; \R) \to H^2(M, L; \R)$ be the connecting homomorphism.
Then
\[
[\omega]^{\psi}_t = [\omega]_L + \partial \Flux(\psi)_t
\]
and condition \eqref{eqn-flux-path-linear} becomes
\begin{equation}
\label{eqn-omega-psi-t-linear}
[\omega]^{\psi}_t = [\omega]_L - t \partial \alpha \in H^2(M, L; \R),
\ \ 0 \leq t \leq T.
\end{equation}
Therefore, as long as $\partial \alpha\neq 0$, the number $\df_L (\alpha)$
measures how far one can deform $[\omega]_L$ in a Lagrangian isotopy $\psi$
satisfying \eqref{eqn-omega-psi-t-linear}.

This viewpoint enables us to study $\df_L$ using methods
of ``hard'' symplectic topology. Namely, the existence of
pseudo-holomorphic curves with boundaries on $\psi_t (L)$ for
$0 \leq t \leq T$ may yield constraints on the time-length $T$ of
the deformation, since $[\omega]^{\psi}_t$ evaluates positively on such curves.
All the upper bounds on $\df_L (\alpha)$ known to us and appearing further in this paper
are obtained in this way.

On the other hand, as we explain below, lower bounds on $\df_L$ can be obtained by
``soft" constructions. These bounds come from the study
of the second invariant of $L$, called the {\it Poisson-bracket invariant of}
$L$. It is defined only for $L$ admitting a fibration
over $\SP^1$: it is a function on the set of isotopy classes of smooth
fibrations of $L$ over $\SP^1$.
In the case when $L$ is a Lagrangian torus with a choice of an isotopy class of
its smooth parametrizations $\T^n \to L$, we reduce this invariant to a function
$\bp_L: H^1(L) \to (0,+\infty]$. Postponing the precise definition of
$\bp_L: H^1(L) \to (0,+\infty]$ until Section~\ref{sec-pb-L-intro} we give a
short and informal definition here.

Namely, assume $L$ is a Lagrangian torus equipped with a parametrization
$\T^n \to L$ and $a\in H^1(L)$, $a\neq 0$ (for $a=0$ we set $\bp_L (0):=+\infty$).
Consider a fibration $f: L\to \SP^1$ such that $a$ is the pull-back under $f$ of
the standard generator of $H^1 (\SP^1)$ (the parametrization of $L$ is used
in the construction of $f$ -- see Section~\ref{subsec-defn-of-bpL-for-Lagr-tori}).
Cut $\SP^1$ into four consecutive arcs, denote their preimages under $f$ by
$X_0$, $Y_1$, $X_1$, $Y_0$ (so that $X_0\cap X_1 = Y_0\cap Y_1 =\emptyset$,
$X_0\cup Y_1\cup X_1\cup X_0 = L$) and set $\bp_L (a) := 1/pb_4^+ (X_0,X_1,Y_0,Y_1)$.
Here $pb_4^+$ is the Poisson-bracket invariant of a quadruple of sets defined
in \cite{EP-tetragons} -- it is a refined version of the $pb_4$-invariant
introduced in \cite{BEP} and it admits a dynamical interpretation in terms of the existence of
connecting trajectories of sufficiently small time-length between $X_0$ and $X_1$
for certain Hamiltonian flows -- see Section~\ref{sec-pb-L-intro} for details.
The relation between $\bp_L$ and $\df_L$ is given by the following inequality
(which will be proved in a stronger form in Theorem~\ref{thm-gopbL-vs-dfL}).

\begin{thm}
\label{thm-bpL-leq-dfL-introduction}
$\bp_L\leq \df_L$ on $H^1(L)$.
\end{thm}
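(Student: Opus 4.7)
The plan is to show that for any $a \in H^1(L;\R)$ and any $T < \bp_L(a)$, a Lagrangian isotopy $\psi = \{\psi_t\}_{0 \le t \le T}$ of $L$ with straight flux $\Flux(\psi)_t = -t a$ can be built; this forces $\df_L(a) \ge T$ by definition, and letting $T \nearrow \bp_L(a)$ gives the theorem. The case $a = 0$ is immediate (constant isotopy), so assume $a \neq 0$. Fix a fibration $f: L \to \SP^1$ in the isotopy class determined by the parametrization with $f^*[d\theta] = a$, and the corresponding four-piece decomposition $L = X_0 \cup Y_1 \cup X_1 \cup Y_0$ appearing in the definition of $\bp_L(a)$. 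Since $T \cdot pb_4^+(X_0, X_1, Y_0, Y_1) < 1$, unpacking the definition of $pb_4^+$ from \cite{EP-tetragons} produces a pair of compactly supported smooth Hamiltonians $F, G: M \to \R$ satisfying the prescribed boundary conditions on the four sets, with $\sup\{F, G\}^+ < 1/T$.

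\textbf{Key step: producing the isotopy.} I would then use $F$ and $G$ to manufacture the Lagrangian isotopy. The guiding picture is that $F|_L$ and $G|_L$ are the values on $L$ of two local real-valued lifts of $f$: after the normalization imposed by the boundary conditions, $F|_L$ lifts $f$ over the arc $Y_0 \cup X_0 \cup Y_1$ and $G|_L$ lifts $f$ over the arc $Y_1 \cup X_1 \cup Y_0$. Choosing a partition of unity on $L$ subordinate to this cover yields a globally defined closed 1-form on $L$ of class $-a$ built out of $d(F|_L)$, $d(G|_L)$, and the partition-of-unity differentials. The desired Lagrangian isotopy $\psi_t$ is then obtained by promoting this 1-form to an infinitesimal normal motion of $L$ in $M$, using the Hamiltonian vector fields of $F$ and $G$ as extensions into $M$ of the two local lifts and patching them in a time-dependent way. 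Concretely, I expect $\psi_t$ to arise as the diagonal in an appropriately chosen two-parameter family of Hamiltonian diffeomorphisms built from $F$ and $G$; the defect of this family from being Hamiltonian, measured by $\{F, G\}$, is exactly what turns the motion into a genuine (non-Hamiltonian) Lagrangian isotopy of flux $-ta$.

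\textbf{Flux verification and main obstacle.} Given such a $\psi$, the flux formula
\[
\Flux(\psi)_t([C]) \;=\; \int_{\text{surface traced by } C \text{ under } \psi} \omega
\]
must yield $-t \langle a, [C]\rangle$ for every class $[C] \in H_1(L;\Z)$, by the construction: the contributions from the Hamiltonian flows of $F$ and $G$ individually are exact and therefore do not affect the cohomology class, while the patching contributes precisely the monodromy of $f$ around $\SP^1$ at constant rate $-a$. The Poisson-bracket bound $\sup\{F,G\}^+ < 1/T$ is exactly what ensures that this two-parameter construction remains valid -- and its output remains Lagrangian -- on the entire time interval $[0, T]$. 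The central technical obstacle is the construction in the previous paragraph: fabricating, from two Hamiltonian flows, an isotopy that is (i) Lagrangian at every moment, (ii) has a \emph{straight} flux path rather than a merely asymptotically linear one, and (iii) reaches the full time $T$ sharply when $\sup\{F,G\}^+$ approaches $1/T$. This amounts to a Lagrangian-isotopy analog of the dynamical interpretation of $pb_4^+$ developed in \cite{EP-tetragons, BEP}, and carrying it out rigorously will likely occupy the bulk of the proof.
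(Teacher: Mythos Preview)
Your proposal has a genuine gap: the ``key step'' of constructing the Lagrangian isotopy from $F$ and $G$ is left as an acknowledged ``central technical obstacle'' with no actual construction, and the heuristic you offer for it is not quite right. The functions $F,G$ in $\cF'(X_0,X_1,Y_0,Y_1)$ are only required to take the constant values $0$ or $1$ on neighborhoods of the four sets; they are \emph{not} local lifts of $f$ to $\R$, and there is no reason to expect a ``diagonal in a two-parameter family of Hamiltonian diffeomorphisms'' to produce a Lagrangian isotopy with \emph{straight} flux, let alone one that persists exactly up to time $1/\max\{F,G\}$.

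The paper bypasses this entirely with a Moser argument. The point is not to flow by $F$ and $G$, but to deform the symplectic form. Since $L = X_0\cup X_1\cup Y_0\cup Y_1$ and on a neighborhood of each piece one of $F,G$ is constant, one has $dF\wedge dG \equiv 0$ near $L$; hence $FdG|_L$ is a closed $1$-form, and a direct check on a loop transverse to the fibers of $f$ gives $[FdG|_L] = -\alpha$. Now set
\[
\omega_t := \omega + t\, dF\wedge dG, \qquad t\ge 0.
\]
A computation yields $\omega_t^n = (1 - t\{F,G\})\,\omega^n$, so $\omega_t$ is symplectic for all $t\in\bigl[0,\,1/\max_M\{F,G\}\bigr)$. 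Since $dF\wedge dG$ is exact and compactly supported, Moser's method produces a compactly supported isotopy $\vartheta_t$ with $\vartheta_t^*\omega = \omega_t$. Because $dF\wedge dG$ vanishes near $L$, the submanifold $L$ is $\omega_t$-Lagrangian for every such $t$, so $L_t := \vartheta_t(L)$ is $\omega$-Lagrangian, and the identification $[FdG|_L]=-\alpha$ gives $[\omega]_{L_t} = [\omega]_L - t\,\partial\alpha$ exactly. Thus $1/\max_M\{F,G\} \le \df_L(\alpha)$; taking the infimum over $(F,G)\in\cF'$ gives $\bp_L(\alpha)\le \df_L(\alpha)$. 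The straightness of the flux and the sharp time bound, which you flagged as the main difficulties, come for free from this construction.
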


This relation, albeit in a different language, was already exploited in \cite{EP-tetragons},
where upper bounds on $\df_L$ were obtained by symplectic rigidity methods in a setting
where the Lagrangian isotopy class of a Lagrangian torus $L$ does not contain (weakly)
exact Lagrangian tori. This was then used to prove the existence of connecting trajectories
of Hamiltonian flows. In this paper we get upper bounds on $\df_L$ in new cases by using
several strong symplectic rigidity results, including some recent ones.
Namely, Theorem~\ref{thm-lagrangian-tori-in-R4} (the original idea of whose proof belongs to E. Opshtein) and
Theorem~\ref{thm-lagr-tori-in-Cn} rely on Gromov's famous work \cite{Gromov},
while Theorem~\ref{thm-tori-in-CPn} and Theorem~\ref{thm-upp-bound-bpL-S2timesS2}
rely on the recent powerful rigidity results of, respectively, K. Cieliebak - K. Mohnke \cite{Ciel-Mohnke}
and G. Dimitroglou Rizell - E. Goodman - A. Ivrii \cite{DRizell-Goodman-Ivrii}.
At the same
time we use new soft dynamical constructions to get lower bounds
on $\bp_L$, and hence on $\df_L$, in many new settings.

In Section~\ref{sec-results-computations} we partially compute the functions
$\bp_L$ and $\df_L$ for several classes of Lagrangian tori. Section~\ref{subsec-discussion}
is then devoted to a discussion of the results and further directions.
In Section~\ref{sec-pb-L-intro} we discuss in detail the definition of $\bp_L$.
The sections following Section~\ref{sec-pb-L-intro} contain the proofs of the results
from Section~\ref{sec-results-computations}.


\section{The main results}
\label{sec-results-computations}

We now present results about $\df_L$ and $\bp_L$ for several examples
of Lagrangian tori in symplectic manifolds.
(For general properties of $\df_L$ and $\bp_L$ see Section~\ref{subsec-pb4-generalities}).

\subsection{Symplectic manifolds without weakly exact Lag\-ran\-gian submanifolds}

Recall that a Lagrangian submanifold $L\subset (M,\omega)$ is called
{\it weakly exact}, if $[\omega]_L \equiv 0$.

\begin{thm}
\label{thm-partial-alpha-proportional-to-omega-upp-bound-on-bpL-alpha}
Let
$L$ be a closed Lagrangian submanifold of a symplectic manifold $(M,\omega)$ (possibly with boundary). Assume that
$(M,\omega)$ does not admit weakly exact Lagrangian submanifolds in the
Lagrangian isotopy class of $L$. Assume that $\alpha \in H^1(L;\R)$ and
\[
\partial \alpha= \frac{1}{C} [\omega]_L
\]
for some $C>0$.

Then $\df_L
(\alpha)\leq C$.
\end{thm}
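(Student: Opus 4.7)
The plan is to translate the hypothesis $\partial \alpha = \frac{1}{C}[\omega]_L$ into a statement about the relative symplectic area classes of the Lagrangians along the isotopy, and then contradict the assumption on the Lagrangian isotopy class of $L$ at the critical time $t = C$.

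Concretely, suppose $\psi = \{\psi_t: L \to M\}_{0 \leq t \leq T}$ is any Lagrangian isotopy with $\psi_0 = \iota$ satisfying the linear flux condition $\Flux(\psi)_t = -t\alpha$ for all $t \in [0,T]$. Using the relation
\[
[\omega]^\psi_t = [\omega]_L + \partial \Flux(\psi)_t \in H^2(M,L;\R)
\]
recalled in the introduction (via the canonical isomorphism $H^2(M,\psi_t(L);\R) \cong H^2(M,L;\R)$ induced by $\psi$), together with the hypothesis $\partial \alpha = \tfrac{1}{C}[\omega]_L$, I obtain
\[
[\omega]^\psi_t \;=\; [\omega]_L - t\,\partial \alpha \;=\; \Bigl(1 - \tfrac{t}{C}\Bigr) [\omega]_L.
\]

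Now I argue by contradiction. Suppose $\df_L(\alpha) > C$, so that there exists some $T > C$ and a Lagrangian isotopy $\psi$ as above defined on $[0,T]$. Setting $t = C \in [0,T]$ in the displayed formula yields $[\omega]^\psi_C = 0$, i.e.\ the relative symplectic area class of $\psi_C(L)$ vanishes. Equivalently, $\psi_C(L)$ is a weakly exact Lagrangian submanifold of $(M,\omega)$. But $\psi_C(L)$ is by construction Lagrangian isotopic to $L$ (via the isotopy $\{\psi_t\}_{0 \leq t \leq C}$), contradicting the standing assumption that no weakly exact Lagrangian lies in the Lagrangian isotopy class of $L$.

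Therefore every admissible $T$ satisfies $T \leq C$, and taking the supremum gives $\df_L(\alpha) \leq C$. There is essentially no technical obstacle here: once the relation between Lagrangian flux and the relative symplectic area class is granted (already stated in the introduction), the proof is a direct linear-algebra substitution, and the only place where the hypothesis on the isotopy class of $L$ is used is to rule out the critical moment $t = C$ when the deformed Lagrangian becomes weakly exact.
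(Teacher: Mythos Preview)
Your proof is correct and follows essentially the same approach as the paper's: both substitute $\partial\alpha = \tfrac{1}{C}[\omega]_L$ into the formula $[\omega]^\psi_t = [\omega]_L - t\,\partial\alpha$ to obtain $[\omega]^\psi_t = (1-t/C)[\omega]_L$, and then observe that $t=C$ would force $\psi_C(L)$ to be weakly exact, contradicting the hypothesis. The only cosmetic difference is that the paper argues directly that $t=C$ cannot lie in $[0,T]$ (hence $T<C$ for every admissible isotopy), whereas you phrase it as a contradiction from $\df_L(\alpha)>C$; these are logically equivalent.
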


\begin{proof}
Let $\psi= \{ \psi_t: L\to M\}$, $0\leq t\leq T$, be a Lagrangian isotopy
of $L$ such that
\begin{equation}
\label{eqn-omega-t-omega-L-t-partial-alpha}
[\omega]^\psi_t = [\omega]_L - t\partial \alpha.
\end{equation}
Since, by the hypothesis of the theorem, $\partial \alpha= \frac{1}{C}
[\omega]_L$, we get
\[
[\omega]^\psi_t = [\omega]_L (1-t/C).
\]
Then $t=C$ cannot lie in the interval $[0,T]$, because if it did, we
would have $[\omega]^\psi_C=0$, implying that the Lagrangian submanifold $\psi_C (L)$,
which is Lagrangian isotopic to $L$, is weakly exact,
in contradiction with the hypothesis of the theorem. Hence $T < C$. Since
this is true for any Lagrangian isotopy of $L$ satisfying
\eqref{eqn-omega-t-omega-L-t-partial-alpha}, we get $\df_L (\alpha)
\leq C$.
\end{proof}

A similar result for a particular class of Lagrangian submanifolds and a
particular $\alpha$ was proved in the same way in \cite{EP-tetragons}.

Let us note that symplectic manifolds that do not admit weakly
exact Lagrangian submanifolds at all or in a particular Lagrangian
isotopy class are plentiful and include, in particular, symplectic
vector spaces \cite{Gromov} and complex projective spaces.

\subsection{Lagrangian tori in symplectic surfaces}
\label{subsec-lagr-tori-in-sympl-surfaces-intro}

Suppose $(M^2,\omega)$ is a connected symplectic surface (possibly with
boundary) and $L\subset (M,\omega)$ is a simple closed oriented curve (that is,
a 1-dimensional Lagrangian torus) lying in the interior of $M$. The orientation of $L$
defines an isomorphism $H^1(L) \cong \Z$. This allows to define
$\bp_L$
on $H^1(L)$
(see Section~\ref{subsec-defn-of-bpL-for-Lagr-tori}).
Denote the positive generator of $H^1 (L)$ by $e$.

We distinguish between two possibilities:
when $M \setminus L$ is disconnected and when $M \setminus L$ is
connected. We present precise statements in both cases.

In the first case $L$ divides $M$ into two connected components: $M_+$
and $M_-$ of areas $A_+$, $A_-$, where $0 < A_\pm \leq +\infty$.
The signs $+$ and $-$ here are determined by the usual orientation
convention.

\begin{thm}
\label{thm-pbL-surfaces} For $k \in \N$ we have
\[
\bp_L (k e) = \df_L (k e) = \frac{A_+}{k},
\]
\[
\bp_L (-k e) = \df_L (-k e) = \frac{A_-}{k},
\]
\end{thm}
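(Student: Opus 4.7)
By Theorem~\ref{thm-bpL-leq-dfL-introduction} we have $\bp_L \leq \df_L$, and reversing the orientation of $L$ swaps $ke \leftrightarrow -ke$ and $M_+ \leftrightarrow M_-$. Hence it suffices to establish, for each $k \in \N$,
\[
\df_L(ke) \leq \frac{A_+}{k} \qquad \text{and} \qquad \bp_L(ke) \geq \frac{A_+}{k};
\]
the statement for $-ke$ then follows by symmetry.

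For the upper bound I would adapt the proof of Theorem~\ref{thm-partial-alpha-proportional-to-omega-upp-bound-on-bpL-alpha}. Given a Lagrangian isotopy $\psi=\{\psi_t\}_{0\leq t\leq T}$ with $\Flux(\psi)_t = -tke$, each curve $\psi_t(L)$ is isotopic to $L$ and so still separates $M$; label the components $M_\pm^t$ continuously with $M_\pm^0 = M_\pm$. The canonical isomorphism $H_2(M,L;\R) \cong H_2(M,\psi_t(L);\R)$ sends $[M_+]\mapsto [M_+^t]$, so pairing the identity $[\omega]_t^\psi = [\omega]_L - tk\,\partial e$ with this class yields
\[
\mathrm{Area}(M_+^t) = A_+ - tk,
\]
using $\langle \partial e,[M_+]\rangle = \langle e,[L]\rangle = 1$ (with the orientation convention of Section~\ref{subsec-lagr-tori-in-sympl-surfaces-intro}). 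Positivity of the area forces $T < A_+/k$, proving the bound.

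For the lower bound I would produce, for every $\varepsilon > 0$, an admissible pair $(G,H)$ for $pb_4^+(X_0,X_1,Y_0,Y_1)$ with $\|\{G,H\}\|_\infty \leq k/A_+ + O(\varepsilon)$. Fix a symplectic embedding $\Phi\colon \SP^1\times[0,A_+/(2\pi))\hookrightarrow M_+$ with $\Phi^*\omega = d\theta\wedge ds$ and $\Phi|_{\SP^1\times\{0\}}$ parametrizing $L$ coherently with $e$; such $\Phi$ exists since $M_+$ has symplectic area $A_+$. Choose a degree-$k$ fibration $f\colon L\to \SP^1$ representing $ke$ and cut $\SP^1$ into four cyclic arcs $A_0,B_1,A_1,B_0$ with $X_i = f^{-1}(A_i)$, $Y_i = f^{-1}(B_i)$. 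Take smooth cut-offs $\chi_X,\chi_Y\colon \SP^1\to[0,1]$ equal respectively to $0,1$ on $A_0,A_1$ and $0,1$ on $B_0,B_1$, chosen so that the curve $\phi\mapsto(\chi_X(\phi),\chi_Y(\phi))$ traces the boundary of the unit square at essentially constant speed, giving $|\chi_X'\chi_Y - \chi_X\chi_Y'| \leq 1/\pi + \varepsilon$. Take a smooth cut-off $\rho\colon[0,A_+/(2\pi))\to[0,1]$ with $\rho\equiv 1$ near $0$, compact support, and $|\rho\rho'|\leq \pi/A_+ + \varepsilon$ (both possible by the sharp area/length relation $\int 2\rho\rho'\,ds = -1$). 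Set
\[
G\circ\Phi(\theta,s) = \rho(s)\chi_X(f(\theta)), \qquad H\circ\Phi(\theta,s) = \rho(s)\chi_Y(f(\theta)),
\]
extended by $0$ to the complement. A direct computation in $(\theta,s)$-coordinates gives
\[
\{G,H\} = \rho(s)\rho'(s)\cdot f'(\theta)\cdot\bigl(\chi_X'\chi_Y - \chi_X\chi_Y'\bigr)(f(\theta)),
\]
and since $|f'|=k$ we get $\|\{G,H\}\|_\infty \leq k/A_+ + O(\varepsilon)$, yielding $pb_4^+\leq k/A_+$ and hence $\bp_L(ke)\geq A_+/k$.

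The main obstacle is the simultaneous sharpness of all three cut-offs: $\chi_X,\chi_Y$ must traverse the unit square at essentially constant speed (an isoperimetric-type optimization forcing the factor $1/\pi$), while $\rho$ must spread across essentially the entire symplectic collar of area $A_+/(2\pi)$ (yielding $\pi/A_+$); their product conspires to give exactly $k/A_+$. A subsidiary check is that $(G,H)$ meets the precise definition of an admissible pair for $pb_4^+$ given in Section~\ref{sec-pb-L-intro}, including the sign refinement encoded by the ``$+$'' superscript; should the sign of $\{G,H\}$ disagree with this convention, it can be corrected by interchanging $\chi_X$ with $1-\chi_X$ or relabelling the arcs.
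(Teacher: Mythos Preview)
Your lower-bound construction contains a concrete error that costs a factor of $2$. Tracing $\partial([0,1]^2)$ at constant speed over a circle of length $2\pi$ gives $|\chi_X'\chi_Y-\chi_X\chi_Y'|=2/\pi$ on the top and right edges (where it equals the speed $4/(2\pi)$) and $0$ on the left and bottom edges, so your product yields only $pb_4^+\le 2k/A_+$, i.e.\ $\bp_L(ke)\ge A_+/(2k)$. The value $1/\pi$ you quote is the isoperimetric optimum for a \emph{circle} of area $1$, but your constraints pin the curve to the boundary of the unit square. The sharp bound \emph{is} reachable within your ansatz, though not at constant speed: since $\chi_X'\chi_Y-\chi_X\chi_Y'\ge 0$ has integral $2$ over $\SP^1$ and is supported on $B_1\cup A_1$, you must let $|A_0|,|B_0|\to0$ so that $|B_1\cup A_1|\to 2\pi$ and the maximum approaches $1/\pi$; Proposition~\ref{prop-defn-of-pb+L-is-correct} makes this degenerate choice of arcs legitimate. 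The paper takes an entirely different route here, using the dynamical characterisation of $pb_4^+$: it models a one-sided neighbourhood of $L$ on a rectangle $[0,A_+]\times[0,1]$ (minus small disks) with $L$ sent close to the boundary, and exhibits a single autonomous Hamiltonian depending only on the horizontal coordinate whose chords from $X_0$ to $X_1$ all have time-length at least $A_+/k-\varepsilon$.

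For the upper bound your area-tracking idea is correct and more direct than the paper's, but the pairing with $[M_+]$ is not well-posed when $M$ is open or has boundary, since then $M_+$ need not represent a class in $H_2(M,L)$ even when $A_+<\infty$. The paper handles this by restricting to a compact subsurface containing the whole isotopy and capping its $M_+$-side boundary components by disks of total area $<\varepsilon$, then invoking Theorem~\ref{thm-partial-alpha-proportional-to-omega-upp-bound-on-bpL-alpha}. Alternatively you could dispense with homology and show directly from the flux condition that the area of the $+$-component of $M\setminus\psi_t(L)$ decreases at rate $k$.
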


In the second case let $L \subset (M,\omega)$ be a simple closed
curve such that $M \backslash L$ is connected.

\begin{thm}
\label{thm-surface-separ-curve}
For $k \in \Z$ we have
\[
\bp_L (k e) = \df_L (ke) = +\infty.
\]
\end{thm}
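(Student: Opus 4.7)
The plan is to prove $\bp_L(ke) = +\infty$ directly; the equality $\df_L(ke) = +\infty$ then follows from the inequality $\bp_L \leq \df_L$ of Theorem~\ref{thm-bpL-leq-dfL-introduction}. The case $k=0$ is immediate: $\bp_L(0) = +\infty$ by convention, and the constant Lagrangian isotopy gives $\df_L(0) = +\infty$. So assume $k \neq 0$, and let $f\colon L\to\SP^1$ be a smooth fibration of degree $k$ representing $ke$, with $X_0, Y_1, X_1, Y_0$ the preimages of four cyclic arcs $A_0, B_1, A_1, B_0\subset\SP^1$.

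The central step is to extend $f$ to a smooth map $\tilde f\colon M\to\SP^1$. Because $L$ is a simple, non-separating closed curve on the orientable surface $M$, its class $[L]\in H_1(M;\Z)$ is non-zero and primitive; by Poincar\'e--Lefschetz duality this gives surjectivity of the restriction map $H^1(M;\Z)\to H^1(L;\Z)\cong\Z$, so the class $ke$ admits a lift. (If $\partial M\neq\emptyset$, apply the argument to the closed double $DM$ and restrict.) Realize the lift by a smooth map, perturb so that its restriction to $L$ is a submersion (hence a degree-$k$ covering), and then, using that any two degree-$k$ self-coverings of $\SP^1$ are isotopic, adjust the map further so that its restriction to $L$ equals $f$ exactly.

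Now pick smooth $\phi,\psi\colon\SP^1\to\R$ with $\phi\le 0$ on an open arc containing $A_0$ and $\phi\ge 1$ on an open arc containing $A_1$, and analogously $\psi\le 0$ near $B_0$ and $\psi\ge 1$ near $B_1$ (possible because the pairs $A_0,A_1$ and $B_0,B_1$ are disjoint), and set $F := \phi\circ\tilde f$, $G := \psi\circ\tilde f$ on $M$. Then $F\le 0$ on the open neighborhood $\tilde f^{-1}(\{\phi\le 0\})\supset X_0$ of $X_0$ in $M$ and $F\ge 1$ on the analogous open neighborhood of $X_1$, and similarly for $G$; so $(F,G)$ is admissible in the definition of $pb_4^+(X_0, X_1, Y_0, Y_1)$. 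The decisive cancellation is then automatic: since $F$ and $G$ are pull-backs along the same function $\tilde f$, one has $dF = \phi'(\tilde f)\,d\tilde f$ and $dG = \psi'(\tilde f)\,d\tilde f$, so $dF\wedge dG\equiv 0$ on the $2$-manifold $M$, i.e.\ $\{F,G\}\equiv 0$ pointwise. Thus $pb_4^+(X_0, X_1, Y_0, Y_1) = 0$ and $\bp_L(ke) = +\infty$. The only real obstacle is the topological construction of $\tilde f$, which is exactly where the non-separating hypothesis is used (and which would break down in the separating setting of Theorem~\ref{thm-pbL-surfaces}); the rest is a short soft pull-back calculation.
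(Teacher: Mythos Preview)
Your approach is sound and conceptually clean, but it has a gap when $M$ is non-compact without boundary, which the hypotheses of Section~\ref{subsec-lagr-tori-in-sympl-surfaces-intro} do allow. The pair $(F,G)$ must lie in $C^\infty_c(M)$ to be admissible for $pb_4^+$, yet your $F=\phi\circ\tilde f$ and $G=\psi\circ\tilde f$ are only bounded, not compactly supported. (Your doubling remark addresses the lift of $ke$ when $\partial M\neq\emptyset$, not this issue.) The repair is not hard: take a simple closed curve $\gamma$ meeting $L$ once, represent the lift of $ke$ by a closed $1$-form supported in a tubular neighbourhood $N$ of $\gamma$, and integrate to obtain $\tilde f$. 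Since $\gamma$ itself is non-separating (it has nonzero intersection with $L$), $M\setminus N$ is connected and $\tilde f$ equals a single constant $p_0\in\SP^1$ there. After a compactly supported perturbation near $L$ making $\tilde f|_L$ a covering, choose the four arcs so that $p_0$ lies in the interior of $A_0$ and take $\phi(p_0)=\psi(p_0)=0$; then $F,G\in C^\infty_c(M)$ as required.

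The paper's proof sidesteps all of this by working locally from the outset: it chooses the admissible quadruple so that $X_1,Y_0,Y_1$ lie inside a small annular neighbourhood $N$ of the dual curve $\gamma$ (with $L\setminus N\subset X_0$), and takes $F,G$ to depend only on the coordinate transverse to $\gamma$ in $N$, extended by zero to $M$. Both arguments rest on the same mechanism---two functions factoring through a common map to a $1$-manifold Poisson-commute---but the paper's local construction gets compact support for free, while your global one has the virtue of making the role of the non-separating hypothesis (surjectivity of $H^1(M;\Z)\to H^1(L;\Z)$) completely transparent.
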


The case $k=1$ was proved in \cite{Samvelyan}. For the proofs of
Theorems~\ref{thm-pbL-surfaces} and \ref{thm-surface-separ-curve}
see Section~\ref{sec-pf-thm-pbL-surfaces}.

\subsection{Toric orbits in symplectic toric manifolds}
\label{subsec-toric-fibers-in-sympl-toric-mfds-intro}

Let $\T^n :=\R^n/\Z^n$. Denote by $\Lie \T^n = \R^n$ the Lie algebra
of $\T^n=\R^n/\Z^n$ and by $\Lied \T^n = (\R^n)^*$ its dual space.
Denote by $(\Z^n)^*$ the integral lattice in $(\R^n)^*$.

Let $(M^{2n},\omega)$ be a connected (not necessarily closed) symplectic
manifold equipped with an effective Hamiltonian action of $\T^n$. Denote
the moment map of the action by $\Phi: M \to (\R^n)^*$. Assume that $\Phi$
is proper, the fibers of $\Phi$ are exactly the orbits of the action and
the image of $\Phi$ is a convex set $\Delta\subset (\R^n)^*$
with non-empty interior so that its interior points are
exactly the regular values of $\Phi$ (by the Atiyah-Guillemin-Sternberg
theorem \cite{Atiyah,Guill-Sternb}, these conditions are automatically
satisfied if $M$ is closed).

Given $\bx\in \Delta$, denote by $L_\bx := \Phi^{-1} (\bx)$ the corresponding
fiber of $\Phi$. If $\bx\in \Int\Delta$, then $L_\bx$ is a Lagrangian torus
and the Hamiltonian $\T^n$-action on $L_\bx$ gives us a preferred isotopy
class of diffeomorphisms $\T^n \to L_\bx$. Thus the pair $H^1 (L_\bx) \subset
H^1 (L_\bx;\R)$ is naturally identified with the pair $(\Z^n)^*\subset(\R^n)^*$.
We denote by
\[
\bp_\bx := \bp_{L_\bx}: (\Z^n)^* \longrightarrow \R
\]
and by
\[
\df_\bx := \df_{L_\bx}: (\R^n)^* \longrightarrow \R
\]
the Poisson bracket and the deformation invariants of $L_\bx$.

For $\bx\in \Int \Delta$ and $\alpha\in H^1 (L;\R)$ define $l_\bx (\alpha)$
as the largest $t > 0$ for which $\bx - t\alpha\in \Delta$ and let $\cI
(\bx,\alpha)$ be the open segment of the open ray $\bx-t\alpha$,
$t\in (0,+\infty)$, connecting $\bx$ and $\bx - l_\bx (\alpha) \alpha$:
\[
\cI (\bx,\alpha) := \{ \bx - t \alpha,\ 0<t< l_\bx (\alpha)\}.
\]
If no such $t$ exists, set $l_\bx (\alpha):= +\infty$ and let $\cI(\bx,\alpha)$
be the whole open ray. In other words, $\cI(\bx,\alpha)$ is the interior of the
closed segment obtained by the intersection of the ray with $\Delta$ and $l_\bx
(\alpha)$ is the ratio of the rational length of $\alpha$ and the rational length
of this segment\footnote{Recall that the rational length of a vector $c v$,
$v\in \Z^n$, is defined as $|c|$.}.

\begin{thm}
\label{thm-toric-case-low-bound}
Let $\alpha \in (\Z^n)^*$. Then
\[
l_\bx (\alpha) \leq \bp_\bx (\alpha).
\]
\end{thm}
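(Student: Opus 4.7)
The plan is to show $pb_4^+(X_0,X_1,Y_0,Y_1)\leq 1/l_\bx(\alpha)$, which is equivalent, by the definition of $\bp_\bx$, to the desired inequality $\bp_\bx(\alpha)\geq l_\bx(\alpha)$. This will be done by constructing, for every $\epsilon>0$, an admissible pair of smooth compactly supported functions $(F_0,F_1)$ on $M$ for the tetragon with $\sup_M\max(\{F_0,F_1\},0)<1/l_\bx(\alpha)+\epsilon$, exploiting the toric structure in action--angle coordinates.

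Since $\bx\in\Int\Delta$, the moment map gives action--angle coordinates $(p,q)\in V\times\T^n$ on a tubular neighborhood of $L_\bx$, with $V\subset(\R^n)^*$ an open neighborhood of $0$, $\omega=\sum_i dp^i\wedge dq^i$, $\Phi=\bx+p$, and the fibration defining $\alpha$ corresponding to $q\mapsto\langle\alpha,q\rangle\!\mod 1$. I would take $F_i$ of the product form
\[
F_i(p,q)=\chi(p)\,G_i(\langle\alpha,q\rangle),\qquad i=0,1,
\]
where $\chi:(\R^n)^*\to[0,1]$ is a smooth cutoff compactly supported in $(\Delta-\bx)^\circ$ with $\chi\equiv 1$ near $0$, and $G_0,G_1:\SP^1\to[0,1]$ are smooth angular functions realizing the tetragon admissibility conditions ($G_0\leq 0$ on the arc of $X_0$, $G_0\geq 1$ on that of $X_1$, and analogously $G_1$ for $Y_0,Y_1$). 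A direct computation using the product structure yields
\[
\{F_0,F_1\}=\chi\,(\partial_\alpha\chi)\cdot W=\tfrac{1}{2}\partial_\alpha(\chi^2)\cdot W,\qquad W:=G_0G_1'-G_0'G_1,
\]
where $\partial_\alpha$ denotes the directional derivative on $(\R^n)^*$ in the direction $\alpha$.

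I would then choose $\chi$ so that $\chi^2$ is, up to smoothing, affine along the segment $\{-t\alpha:0\leq t\leq l_\bx(\alpha)-\epsilon\}$, giving $|\partial_\alpha(\chi^2)|\leq 1/(l_\bx(\alpha)-\epsilon)$ along this ray. I would choose $G_0,G_1$ so that the transitions of $G_0$ occur only on the arcs $Y_0,Y_1$ and those of $G_1$ only on $X_0,X_1$; this forces $W$ to have definite sign (reflecting the winding $\pm 1$ of the curve $(G_0,G_1):\SP^1\to[0,1]^2$ about the center of the square), and with the arcs $X_0,Y_0$ taken sufficiently small, $\|W\|_\infty\leq 2+O(\epsilon)$. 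After possibly reversing the cyclic order of the tetragon (a symmetry of $pb_4^+$) to align signs, the positive part of $\{F_0,F_1\}$ is supported on the $-\alpha$-side of $L_\bx$, yielding
\[
\sup_M\max(\{F_0,F_1\},0)\leq\tfrac{1}{2}\cdot\tfrac{1}{l_\bx(\alpha)-\epsilon}\cdot(2+O(\epsilon))\xrightarrow{\epsilon\to 0}\frac{1}{l_\bx(\alpha)}.
\]

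The main obstacle is obtaining the sharp constant $1$ in this bound: it depends on an exact multiplicative cancellation between the factor $1/2$ coming from the identity $\chi\partial_\alpha\chi=\tfrac{1}{2}\partial_\alpha(\chi^2)$ and the structural lower bound $\|W\|_\infty\gtrsim 2$ forced by the winding constraint on $(G_0,G_1)$. Two further details that must be handled are: smoothness of $F_i$ across the preimage of $\partial\Delta$, ensured by taking $\chi$ compactly supported in $(\Delta-\bx)^\circ$ and vanishing near $\partial(\Delta-\bx)$; and smoothness of $G_i(\langle\alpha,q\rangle)$ on $L_\bx$, which uses the integrality hypothesis $\alpha\in(\Z^n)^*$ so that $\langle\alpha,\cdot\rangle$ descends modulo $1$ to the torus.
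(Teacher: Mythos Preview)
Your approach is valid but genuinely different from the paper's. The paper does not construct test functions directly: instead it performs a symplectic reduction along the $(n-1)$-subtorus $H\subset\T^n$ with $\Lie H=\Ker\alpha$, so that $L_\bx$ sits inside a coisotropic $\Sigma_\bx$ fibering over a $2$-dimensional symplectic orbifold $N_\bx$, with $L_\bx$ projecting to a simple closed curve $\Gamma$ bounding a domain of area $l_\bx(\alpha)$. The reduction inequality \eqref{eqn-pb-sympl-reduction} for $pb_4^+$ together with the $2$-dimensional lower bound (the proof of Theorem~\ref{thm-pbL-surfaces}, carried out dynamically via chord estimates rather than test functions) then gives $\bp_\bx(k\alpha)\geq l_\bx(\alpha)/k$. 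Your direct construction trades this modular reduction-to-surfaces argument for an explicit product-form pair in action--angle coordinates; it is more self-contained but requires the delicate constant-matching you flag.

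One genuine error: ``reversing the cyclic order of the tetragon'' is \emph{not} a symmetry of $pb_4^+$ --- swapping $Y_0\leftrightarrow Y_1$ (or equivalently $X_0\leftrightarrow X_1$) replaces $\bp_L(\alpha)$ by $\bp_L(-\alpha)$, which is in general different (see Remark~\ref{rem-bp-of-multiple-classes-defn}). Fortunately you do not need this step. With the counterclockwise order $X_0,Y_1,X_1,Y_0$ and your choice that $G_0'$ is supported on $Y_0\cup Y_1$ and $G_1'$ on $X_0\cup X_1$, one checks directly that $W=G_0G_1'-G_0'G_1\leq 0$ everywhere; on the other hand $\partial_\alpha(\chi^2)>0$ precisely on the $-\alpha$ side of $L_\bx$ (where $\chi^2$ decreases along $-t\alpha$). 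Hence in the paper's sign convention $\{F_0,F_1\}=-\tfrac12\partial_\alpha(\chi^2)\,W$ is non-negative exactly on the $-\alpha$ side, where your bound on $\partial_\alpha(\chi^2)$ applies, and non-positive on the $+\alpha$ side, so it does not contribute to $\max_M\{F_0,F_1\}$. The sharp constant then follows exactly as you outline: taking $|X_0|,|Y_0|\to 0$ forces $|Y_1|,|X_1|\to 1/2$, so $\|W\|_\infty\to 2$, and the factor $1/2$ cancels it. Finally, to make $\chi$ compactly supported in $\Int\Delta-\bx$ with $\partial_\alpha(\chi^2)$ controlled, take $\chi=\chi_1(t)\chi_\perp(p_\perp)$ with $p=-t\alpha+p_\perp$ for $p_\perp$ in a fixed complement of $\Span\{\alpha\}$; then $\partial_\alpha\chi_\perp=0$ and $\chi_\perp$ can be supported in an arbitrarily thin tube around the segment $\{-t\alpha:0\le t\le l_\bx(\alpha)-\epsilon\}$, which lies in $\Int\Delta-\bx$ by compactness.
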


For the proof see Section~\ref{sec-pf-thm-toric-case-low-bound}.

\begin{rem}
\label{rem-interior-of-symplectic-toric-mfd}
{\rm
Note that there exists a symplectomorphism $\Phi^{-1} (\Int \Delta)\to \T^n\times \Int\Delta$ that
identifies $\Phi^{-1} (\bx)$ with $\T^n\times  \{ \bx \}$ for each  $\bx\in \Int\Delta$.
Then Theorem~\ref{thm-def-shapes}, applied to the symplectic toric manifold $\Phi^{-1} (\Int \Delta)$,
implies that {\it for $L_\bx$ viewed as a Lagrangian submanifold
of $\Phi^{-1} (\Int \Delta)$}
for each $\alpha$
the deformation invariant
$\df_\bx (\alpha)$  equals $l_\bx (\alpha)$
and thus, by Theorem~\ref{thm-toric-case-low-bound},
\[
\bp_\bx (\alpha) = \df_\bx (\alpha) = l_\bx (\alpha)
\]
for all $\alpha \in H^1(L)$.

For $L_\bx$ viewed as a Lagrangian submanifold of the whole $M$ the problem of
finding $\bp_\bx (\alpha)$ and $\df_\bx (\alpha)$ is more difficult and the results below
that we have been able to obtain are weaker.
}
\end{rem}

\subsection{Lagrangian tori in symplectic vector spaces}
\label{subsec-tori-in-symplectic-vector-sp-intro}

Let $M=\C^n$ be equipped with the standard symplectic structure $\omega$ and
let $z_1,\ldots,z_n$ be the complex coordinates on $\C^n$.
Given $x_1,\ldots,x_n>0$, set $\bx:=(x_1,\ldots,x_n)$. Define a
{\it split Lagrangian torus} $T^n (\bx)\subset \C^n$ by
\[
T^n (\bx) := \{ \pi |z_i|^2 = x_i,\ i=1,\ldots,n\}.
\]
The standard Hamiltonian $\T^n$-action gives us a preferred isotopy
class of diffeomorphisms $\T^n \to T^n (\bx)$ and we naturally identify
$H^1 (L_\bx)\subset H^1 (L_\bx;\R)$ with $(\Z^n)^*\subset(\R^n)^*$.

We first consider the case of Lagrangian tori in $\C^2$.

\bigskip


\noindent {\sc Lagrangian tori in $\C^2$}
\medskip
\medskip

We first present computations of $\df_L$ for general Lagrangian tori in $\C^2$.
We then restrict to the cases of split and Chekanov tori.

\medskip

Let $L \subset (\C^2, \omega)$ be a Lagrangian torus.

We say that an almost complex structure $J$ (on $\C^2$) compatible with
$\omega$ is {\it regular (for $L$) with respect to a point $p\in L$},
if for any $C\in H_2 (\C^2, L)$ the moduli
space of (non-parameterized) somewhere injective $J$-holomorphic disks
in $\C^2$ with boundary in $L$ and with one marked boundary point that
represent the class $C$ {and pass through $p$ (that is, the marked point
coincides with $p$)} is a (transversally cut out) smooth manifold of the expected
dimension. For any $p\in L$ a generic almost complex structure $J$ compatible
with $\omega$ has this property (see Section~\ref{subsec-lagr-tori-in-R4-pf}).

The original idea of the proof of the following theorem belongs to E.Op\-shtein.

\begin{thm}\label{thm-lagrangian-tori-in-R4}
Assume that $H_2(\C^2, L) \simeq \Z \langle A, B \rangle$, where $\omega (A) >0$.
Let $\alpha \in H^1(L; \R)$ so that $\partial \alpha(A) =: \sigma > 0$ and $\partial
\alpha(B) =: \rho$. Assume that for some $k \geq 0$
\[
\mu(A) = 2,\ \mu(B) = 2k,
\]
and
\[
\rho/\sigma\leq k+1\leq \omega (B)/\omega (A).
\]
Then

\medskip
\noindent
(A)
For any $p\in L$ and any almost complex structure $J$ compatible
with $\omega$ and regular with respect to $p$ the mod-2 number
$n_A (p,J)$ of (non-parameterized) somewhere injective $J$-holomorphic disks with boundary
in $L$ in the class $A$ passing through $p$
is well-defined and independent of the choice of $p$ and $J$.

\smallskip
\noindent
(B) If $n_A (p,J)\neq 0$ for some $p$ and $J$ as in (A), then
\[
\df_L(\alpha) \leq \frac{\omega(A)}{\sigma}.
\]
\end{thm}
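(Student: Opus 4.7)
The plan is to treat (A) and (B) jointly by the standard $J$-holomorphic disk method, with the numerical hypotheses $\mu(A)=2$, $\mu(B)=2k$, $\omega(B)/\omega(A)\geq k+1$, and $\rho/\sigma\leq k+1$ playing the role that monotonicity usually plays. For part (A), the moduli space of somewhere injective $J$-holomorphic disks in class $A$ with one boundary marked point pinned at $p$ has expected dimension $n+\mu(A)-2-n=0$, so for $J$ regular with respect to $p$ it is a discrete set. Its mod-2 count $n_A(p,J)$ is finite and independent of $(p,J)$ provided the moduli space is compact and the parametric family over a generic path $(p_s,J_s)$ is a compact one-manifold with only the two endpoint boundaries. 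Sphere bubbles never arise because $\pi_2(\C^2)=0$, so the analysis reduces to controlling potential disk bubbling $A=C_1+C_2$ in $H_2(\C^2,L)$.

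The key auxiliary observation is an \emph{index--area inequality}: for a nonzero class $C=mA+nB$ with $\omega(C)>0$ one has $m\geq 1$ if $n=0$, and $\mu(C)\geq 2|n|+2\geq 4$ if $n\leq -1$. Indeed, $\omega(C)>0$ gives $m+n\,\omega(B)/\omega(A)>0$; combined with $m\in\Z$ and $\omega(B)/\omega(A)\geq k+1$, this forces $m\geq |n|(k+1)+1$ when $n\leq -1$, and substituting into $\mu(C)=2m+2kn$ yields the claimed bound. Applied to a splitting $A=C_1+C_2$ with $\omega(C_i)>0$ and $n_1+n_2=0$: the case $n_1=n_2=0$ is ruled out since $m_1,m_2\geq 1$ cannot sum to $1$; otherwise, after relabelling, $n_2\leq -1$ yields $\mu(C_2)\geq 4$, so $\mu(C_1)=2-\mu(C_2)\leq -2$, and a somewhere injective $J$-holomorphic disk in class $C_1$ does not exist for generic $J$ because its expected dimension $n+\mu(C_1)-3<0$. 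Primitivity of $A$ in the basis $\{A,B\}$ eliminates multiply-covered bubbles whose underlying simple map would have the same homology class.

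For part (B), suppose for contradiction that there is a Lagrangian isotopy $\psi=\{\psi_t\}_{0\leq t\leq T}$ with $\Flux(\psi)_t=-t\alpha$ and $T>\omega(A)/\sigma$. Set $L_t:=\psi_t(L)$. Via the canonical isomorphism $H_2(\C^2,L)\simeq H_2(\C^2,L_t)$ induced by the isotopy, transport $A,B$; their Maslov indices are preserved while the relative symplectic areas become $\omega_t(A)=\omega(A)-t\sigma$ and $\omega_t(B)=\omega(B)-t\rho$. The hypothesis $\rho/\sigma\leq k+1$ yields
\[
\omega_t(B)-(k+1)\omega_t(A)=[\omega(B)-(k+1)\omega(A)]+t\,[(k+1)\sigma-\rho]\geq 0
\]
for all $t\in[0,\omega(A)/\sigma]$, so the index--area inequality remains valid on $L_t$ throughout this interval. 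Choosing a generic one-parameter family $(p_t,J_t)$ of regular data with $p_t=\psi_t(p)$, the parametric cobordism of (A) forces $n_A(p_t,J_t)$ to be constant and hence equal to the nonzero value $n_A(p,J)$ for every $t\in[0,\omega(A)/\sigma)$. Taking $t_i\nearrow t^*:=\omega(A)/\sigma$ produces $J_{t_i}$-holomorphic disks $u_i$ on $L_{t_i}$ in class $A$ whose areas $\omega(A)-t_i\sigma$ tend to $0$; by Gromov compactness they subconverge to a stable map with boundary on $L_{t^*}$, in class $A$, and total $\omega$-area zero, which (since $\C^2$ contains no non-constant holomorphic spheres and every holomorphic disk has strictly positive area) must be entirely constant, contradicting $A\neq 0$. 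This yields $T\leq\omega(A)/\sigma$ and hence $\df_L(\alpha)\leq\omega(A)/\sigma$.

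The main obstacle will be the setup of parametric transversality for the moduli spaces in (A) and along the entire one-parameter family $(L_t,p_t,J_t)$ in (B): one must verify that jointly perturbing $J$ and the marked point $p$ is a large enough perturbation class to achieve regularity, and one must systematically rule out every possible bubble tree in the Gromov compactification --- not just binary splittings but also trees with several disk components and with multiple covers at interior nodes. The index--area inequality reduces this to a finite case check by controlling which classes can carry disks of a given energy and Maslov index, but carrying out the full compactness/transversality analysis for a variable Lagrangian target is where the bulk of the technical work resides.
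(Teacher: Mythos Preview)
Your approach coincides with the paper's in outline: both run the cobordism argument for $\cM_1(A,J,p)$, using $\pi_2(\C^2)=0$, generic regularity in a one-parameter family to exclude simple disks of Maslov index $\leq -2$, and an algebraic lemma drawn from the numerical hypotheses to rule out bubbling. The paper phrases the lemma as: any Maslov-$0$ class $C$ with $\omega_t(C)>0$ is a positive multiple of $B-kA$ and hence satisfies $\omega_t(C)\geq\omega_t(B-kA)\geq\omega_t(A)$; since each bubble in a tree for $A$ has area strictly less than $\omega_t(A)$, no Maslov-$0$ bubbles occur, all bubbles have $\mu\geq 2$, and the tree collapses to a single disk. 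Your index--area inequality ($\omega(C)>0$ and $n\leq -1$ force $\mu(C)\geq 4$) is a different slice of the same arithmetic and, once extended beyond the binary case you treat to an arbitrary tree $A=\sum_i C_i$ with $\omega(C_i)>0$ and $\mu(C_i)\geq 0$, yields the same conclusion directly: if some $n_j\leq -1$ then $\mu(C_j)\geq 4>2=\sum_i\mu(C_i)$, contradicting non-negativity of the other terms; hence all $n_i=0$, so all $m_i\geq 1$ and $\sum m_i=1$ forces $l=1$. You should write this extension out rather than leave it among the acknowledged ``technical work''.

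The one genuine imprecision is your handling of multiple covers. Primitivity of $A$ in $H_2(\C^2,L)$ only rules out the limit disk being a $d$-fold cover of a \emph{single} simple disk in class $A/d$; the Kwon--Oh structure theorem says a non-simple disk is in general a positive integer combination $A=\sum_i k_i[D^{(i)}]$ of several distinct simple classes, and primitivity alone does not exclude this (e.g.\ one could a priori have $A=1\cdot C+1\cdot(A-C)$). The paper handles this by re-running the same Maslov/area argument on the simple pieces $D^{(i)}$ to force $m=1$, $k_1=1$; your index inequality does the job equally well once applied at that level, but the primitivity remark as stated is not sufficient.
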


For the proof see Section~\ref{subsec-lagr-tori-in-R4-pf}.

\begin{rem}
\label{rem-extension-of-thm-lagrangian-tori-in-R4}
{\rm
As it can be seen from the proof, Theorem~\ref{thm-lagrangian-tori-in-R4} remains true
if $\C^2$ is replaced by any 4-dimensional symplectic manifold $(M,\omega)$ which
satisfies $\omega|_{\pi_2 (M)} = c_1|_{\pi_2 (M)} = 0$, is geometrically bounded in the
sense of \cite{ALP-book}, or convex at infinity in the sense of \cite{Eliash-Gromov}, and
$$H_2 (M,L)\cong \Z \langle A,B\rangle \oplus \im (\pi_2 (M) \to \pi_2 (M,L)),$$
where $A,B$ satisfy the same conditions as in Theorem~\ref{thm-lagrangian-tori-in-R4}.
}
\end{rem}

Note that Theorem~\ref{thm-lagrangian-tori-in-R4} applies to certain
split Lagrangian tori $T^2 (\bx)\subset \C^2$. Indeed, by \cite{Cho-Oh},
the standard complex structure $J$ on $\C^2$ (which is, of course, compatible with
$\omega$) is regular (for $T^2 (\bx)$)
with respect to any point $p\in T^2 (\bx)$.
It is also easy to see that for any point $p\in T^2 (\bx)$ there is exactly one regular
(non-parametrized) $J$-holomorphic disk in the class $A$
(with one marked point)
that passes through $p$, if $A$ is any one of the two standard generators
of $H_2(\C^2, T^2 (\bx))$ with positive symplectic area.

Theorem~\ref{thm-lagrangian-tori-in-R4}, together with Theorem~\ref{thm-bpL-leq-dfL-introduction}
and Theorem~\ref{thm-computations-for-split-tori-in-Cn} (this general statement
for split Lagrangian tori in $\C^n$ will appear later),
yields the following corollary for computations of $\df_L$ and $\bp_L$ for split tori in $\C^2$.
We state the result in the case $x_1 \leq x_2$ -- the corresponding result
in the case $x_1 > x_2$ can be deduced from it using the obvious symmetry of $\bp_\bx$ and $\df_\bx$
with respect to permutations of coordinates in $\bx$ (see \eqref{eqn-permutation-of-factors-in-split-torus} below).

\begin{cor}
\label{cor-def-bp-for-split-tori}
Assume $m,n\in\Z$, $\bx=(x_1,x_2)$, $0 < x_1 \leq x_2$. Under these
assumptions the following claims are true:
\begin{enumerate}
\item[(A)] If $m,n \leq 0$, then
\[
\bp_\bx (m,n) = \df_\bx (m,n) = +\infty.
\]
\item[(B)]
If  $x_1 < x_2/ l$ and $1 \leq n \leq l$, then
\[
\bp_\bx (0,n) = \df_\bx (0,n) = +\infty.
\]
\item[(C)] If $n x_1 - m x_2 \leq 0, m>0$, then
\[
x_1/m \leq \bp_\bx (m,n) \leq \df_\bx (m,n).
\]
\item[(D)] If $n x_1 - m x_2\geq 0, n>0$, then
\[
x_2/n \leq \bp_\bx (m,n) \leq \df_\bx (m,n).
\]
\item[(E)] Assume for $\bx = (x_1, x_2)$ that
$2x_1 \leq x_2$. If $m > 0$, $n - 2m \leq 0$, then
\[
x_1/m = \bp_\bx (m,n) = \df_\bx (m,n).
\]
\end{enumerate}
\Qed
\end{cor}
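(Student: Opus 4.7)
The plan is to combine the three theorems cited just before the corollary: the upper bound on $\df_\bx$ from the holomorphic disk count in Theorem~\ref{thm-lagrangian-tori-in-R4}, the soft inequality $\bp_\bx\leq \df_\bx$ from Theorem~\ref{thm-bpL-leq-dfL-introduction}, and the lower bounds on $\bp_\bx$ supplied by the forthcoming Theorem~\ref{thm-computations-for-split-tori-in-Cn}. Parts (A)--(D) will be pure assembly: a lower bound on $\bp_\bx$ transfers automatically to a lower bound on $\df_\bx$ via $\bp_\bx\leq \df_\bx$, and reading off the four regimes from the general split-torus statement yields exactly what is claimed. Part (E) is the only one that also needs a matching upper bound on $\df_\bx$, obtained from Theorem~\ref{thm-lagrangian-tori-in-R4}.

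For parts (A) and (B) I would specialise Theorem~\ref{thm-computations-for-split-tori-in-Cn} to $n=2$, which in these regimes (``free'' directions $m,n\leq 0$, and the ``long-side'' regime $x_1<x_2/l$ with the class $(0,n)$, $1\leq n\leq l$) gives $\bp_\bx=+\infty$. The inequality $\bp_\bx\leq\df_\bx$ then forces $\df_\bx=+\infty$ as well. For parts (C) and (D), the same general statement produces the finite lower bounds $x_1/m$ and $x_2/n$ on $\bp_\bx$, which are again pushed up to $\df_\bx$ via Theorem~\ref{thm-bpL-leq-dfL-introduction}.

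For part (E) I would first check that the hypotheses $2x_1\leq x_2$, $m>0$ and $n-2m\leq 0$ imply the hypothesis of (C), since $nx_1\leq 2mx_1\leq mx_2$, so $nx_1-mx_2\leq 0$; thus (C) already gives $\bp_\bx(m,n)\geq x_1/m$. For the matching upper bound on $\df_\bx$, I would apply Theorem~\ref{thm-lagrangian-tori-in-R4} with $A,B$ the two generators of $H_2(\C^2, T^2(\bx))$ coming from the disks in the two coordinate complex lines: $\omega(A)=x_1$, $\omega(B)=x_2$, $\mu(A)=\mu(B)=2$ (so $k=1$), $\sigma=\partial\alpha(A)=m>0$ and $\rho=\partial\alpha(B)=n$. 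The two numerical conditions $\rho/\sigma=n/m\leq 2=k+1$ and $k+1=2\leq x_2/x_1=\omega(B)/\omega(A)$ are then immediate from the hypotheses of (E). The regularity of the standard complex structure on $\C^2$ and the fact that $n_A(p,J)=1$ through every $p\in T^2(\bx)$ are the Cho--Oh observations already recalled in the paragraph preceding the corollary. Theorem~\ref{thm-lagrangian-tori-in-R4}(B) then delivers $\df_\bx(m,n)\leq x_1/m$, and combining with the lower bound from (C) and $\bp_\bx\leq\df_\bx$ yields the triple equality $x_1/m=\bp_\bx(m,n)=\df_\bx(m,n)$.

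Since the corollary is essentially a bookkeeping assembly of deeper results proved elsewhere, the only delicate point is the numerical verification in (E) that $n/m\leq k+1\leq x_2/x_1$ with $k=1$, which is exactly what the two hypotheses of (E) are tailored to guarantee. All the real work lives in the underlying theorems: the hard symplectic rigidity of Theorem~\ref{thm-lagrangian-tori-in-R4}, resting on Opshtein's idea and Gromov's pseudo-holomorphic curve theory, and the soft dynamical constructions underlying Theorem~\ref{thm-computations-for-split-tori-in-Cn}.
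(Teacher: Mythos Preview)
Your proposal is correct and follows essentially the same route as the paper: parts (A)--(D) are read off from Theorem~\ref{thm-computations-for-split-tori-in-Cn} together with $\bp_\bx\leq\df_\bx$, and part (E) is obtained by sandwiching, using (C) for the lower bound and Theorem~\ref{thm-lagrangian-tori-in-R4} with the standard basis $A,B$ and $k=1$ for the upper bound. Your explicit verification that the hypotheses of (E) imply those of (C), and your check of the numerical conditions $n/m\leq 2\leq x_2/x_1$, spell out exactly what the paper leaves implicit.
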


\begin{figure}[h]
    \centering
    \includegraphics[scale=0.72]{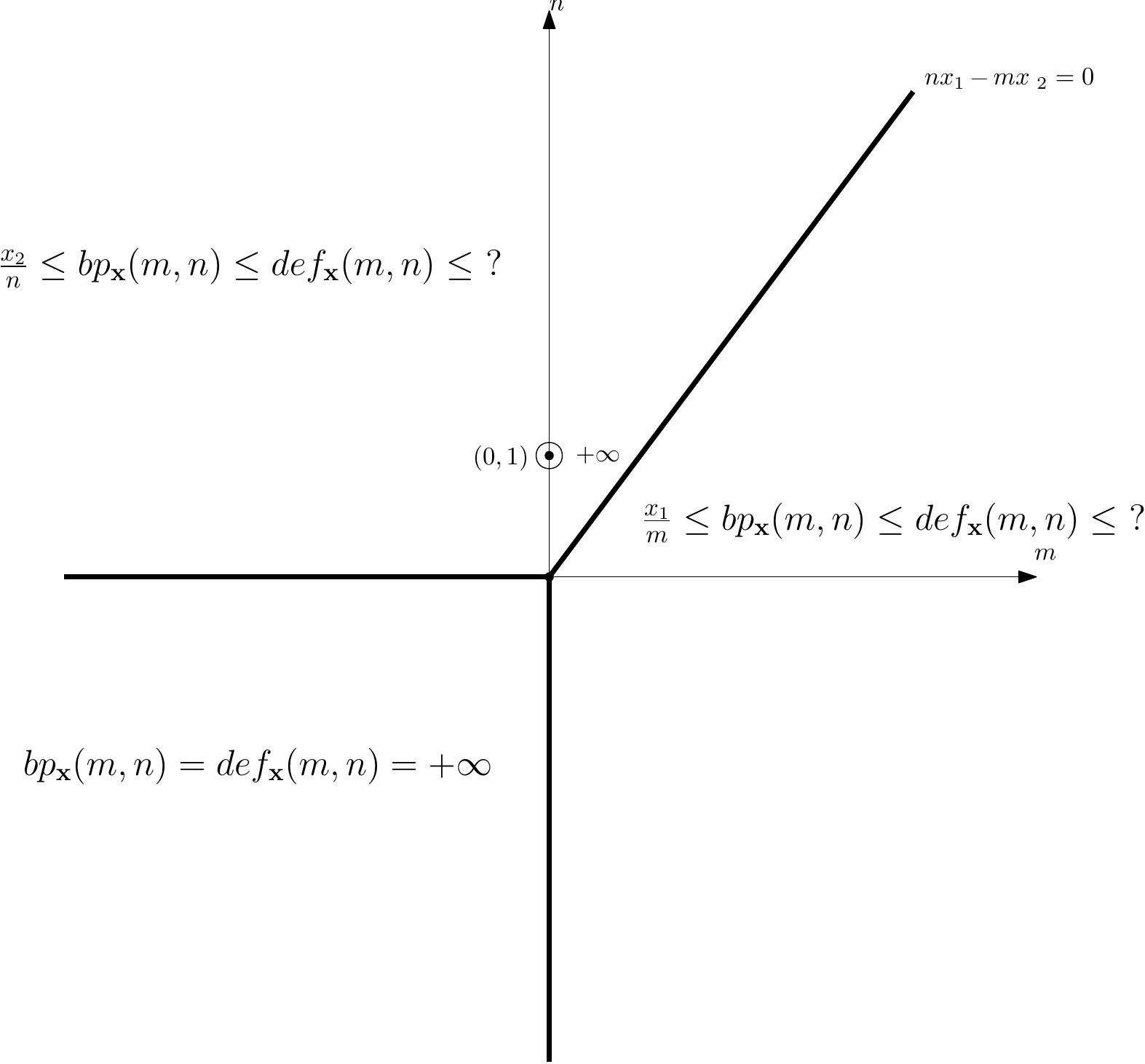}
    \caption{Regions of validity in Corollary~\ref{cor-def-bp-for-split-tori} for $x_1 < x_2$.}
    \label{figure-bp-diagram-2}
\end{figure}

Parts (A) - (D) of Corollary~\ref{cor-def-bp-for-split-tori}
follow directly from Theorems~\ref{thm-bpL-leq-dfL-introduction} and
\ref{thm-computations-for-split-tori-in-Cn}. Part (E) follows from the
inequalities:
\[
x_1/m \leq \bp_\bx (m,n)\leq \df_\bx (m,n)\leq x_1/m.
\]
Here the first inequality follows from part (C) of Corollary~\ref{cor-def-bp-for-split-tori},
the second one from Theorem~\ref{thm-bpL-leq-dfL-introduction}, and the third one from
Theorem~\ref{thm-lagrangian-tori-in-R4} with $A,B$ being the standard basis of
$H_2 (\C^2, T^2 (\bx))$ (so that $\omega(A) = x_1$, $\omega(B) = x_2$),
$\alpha = me_1+ne_2$, $a=m$, $b=n$ and $k = 1$.

\begin{figure}[h]
    \centering
    \includegraphics[scale=0.72]{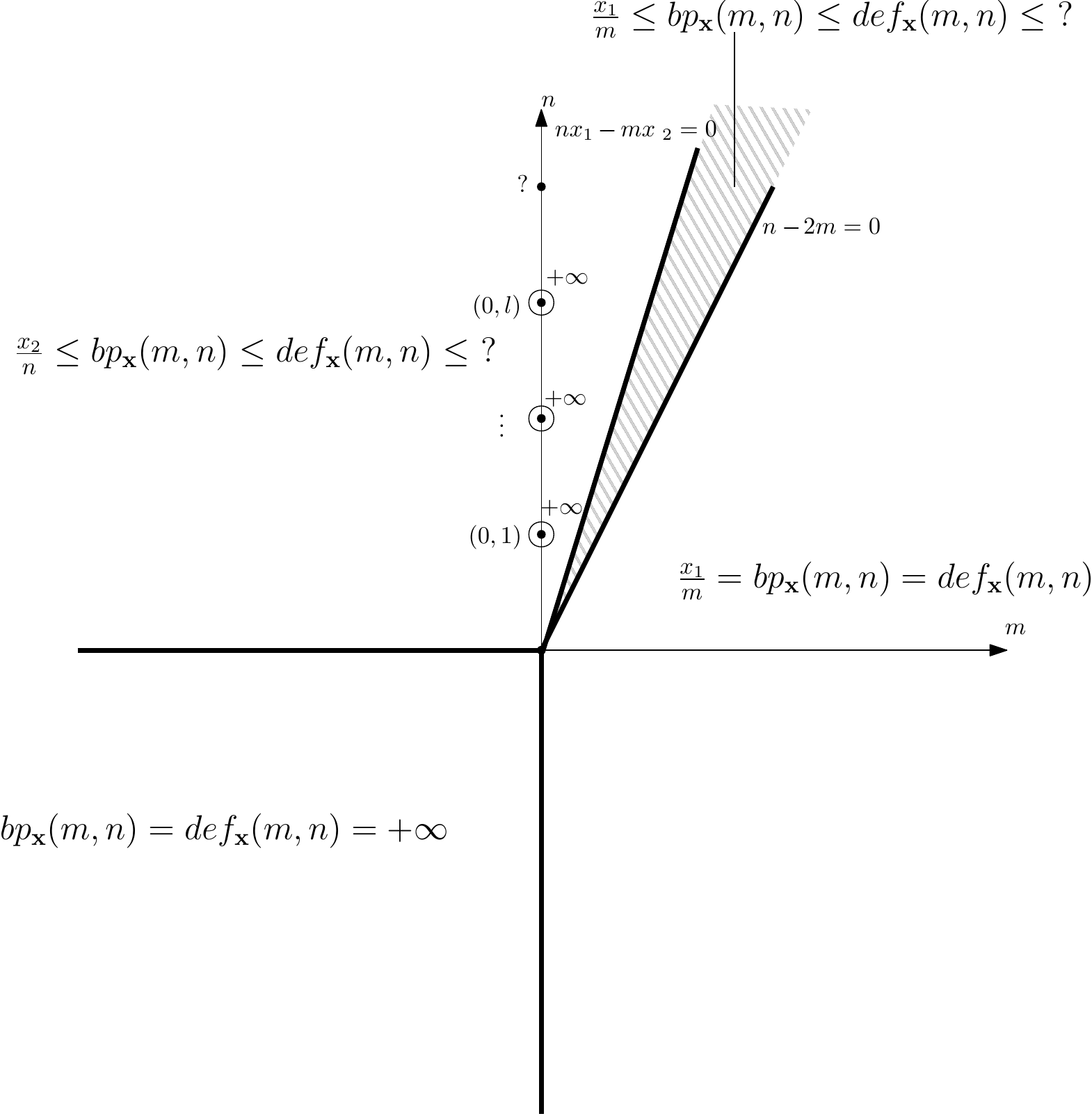}
    \caption{Regions of validity in Corollary~\ref{cor-def-bp-for-split-tori} for $x_1 < x_2/l$ for $l \geq 2$.}
    \label{figure-bp-diagram-22}
\end{figure}

In the case of the split monotone Lagrangian torus in $\C^2$
Corollary~\ref{cor-def-bp-for-split-tori}, together with the obvious
homogeneity property of $\df_\bx$, $\bp_\bx$ with respect to $\bx$ (see part (B) of Proposition~\ref{prop-basic-properties-of-pbL-for-split-tori-in-R2n}
below) and Theorem~\ref{thm-partial-alpha-proportional-to-omega-upp-bound-on-bpL-alpha}
(since $\C^2$ does not admit weakly exact Lagrangian submanifolds by \cite{Gromov}), yields
the following result.

\begin{cor}
\label{cor-def-bp-for-split-monotone-tori-in-C2}
Assume $m,n\in\Z$, $\bx=(x,x)$, $x>0$. Then
\begin{alignat*}{3}
+\infty &= \bp_\bx (m,n) && = \df_\bx (m,n),\ \textit{if}\ m,n \leq 0, \\
\frac{x}{m}  &= \bp_\bx (m,m) &&= \df_\bx (m,m),\ \textit{if}\ m=n > 0, \\
\frac{x}{\max\{ m,n\}} &\leq \bp_\bx (m,n) &&\leq \df_\bx (m,n),\ \textit{if}\ m > 0\ \textit{or}\ n > 0.
\end{alignat*}
\Qed
\end{cor}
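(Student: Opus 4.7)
The plan is to combine the general estimates from Corollary~\ref{cor-def-bp-for-split-tori} (which apply here since $x_1 = x_2 = x$ satisfies $0 < x_1 \leq x_2$) with the rigidity result Theorem~\ref{thm-partial-alpha-proportional-to-omega-upp-bound-on-bpL-alpha}. The three claims will be handled in turn.

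For the first claim ($m,n \leq 0$), I would simply invoke part (A) of Corollary~\ref{cor-def-bp-for-split-tori}, which gives $\bp_\bx(m,n) = \df_\bx(m,n) = +\infty$ directly in this quadrant. For the third claim (the lower-bound inequality when $m > 0$ or $n > 0$), I split into two subcases: if $m \geq n$ and $m > 0$, then $nx_1 - mx_2 = (n-m)x \leq 0$, so part (C) of Corollary~\ref{cor-def-bp-for-split-tori} yields $x/m \leq \bp_\bx(m,n) \leq \df_\bx(m,n)$; symmetrically, if $n \geq m$ and $n > 0$, part (D) yields $x/n \leq \bp_\bx(m,n) \leq \df_\bx(m,n)$. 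Either way the lower bound is $x/\max\{m,n\}$, as required.

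The remaining, and main, point is the \emph{upper} bound $\df_\bx(m,m) \leq x/m$ in the diagonal case $m = n > 0$ (the matching lower bound already follows from the third claim). Here I would use Theorem~\ref{thm-partial-alpha-proportional-to-omega-upp-bound-on-bpL-alpha}: since by Gromov's theorem \cite{Gromov} $\C^2$ admits no weakly exact Lagrangian submanifold whatsoever, it suffices to check that $\partial\alpha$ is a positive real multiple of $[\omega]_L$ for $\alpha = (m,m)$. Concretely, writing $A, B$ for the standard generators of $H_2(\C^2, T^2(\bx))$ (on which $\partial \alpha$ evaluates as $m$ and $n$ in the dual basis and on which $[\omega]_L$ evaluates as $x$ and $x$), one sees that $\partial(m,m) = (m/x)[\omega]_L = \frac{1}{x/m}[\omega]_L$, so $C = x/m$ in the theorem and $\df_\bx(m,m) \leq x/m$ follows. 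Combined with the lower bound from the third claim and Theorem~\ref{thm-bpL-leq-dfL-introduction}, this squeezes $\bp_\bx(m,m) = \df_\bx(m,m) = x/m$.

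I do not expect a real obstacle here: the statement is structured as a corollary and all three pieces follow by bookkeeping. The only place requiring a moment of care is the identification $\partial(m,m) = (m/x)[\omega]_L$, which rests on correctly pairing $\partial\alpha$ against the generators $A,B$ of $H_2(\C^2,T^2(\bx))$ and using that $\omega(A) = \omega(B) = x$ for the monotone split torus. Once this proportionality is recorded, the absence of weakly exact Lagrangians in $\C^2$ does all the work via Theorem~\ref{thm-partial-alpha-proportional-to-omega-upp-bound-on-bpL-alpha}.
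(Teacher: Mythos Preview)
Your proposal is correct and follows essentially the same route as the paper: the first and third claims are read off from parts (A), (C), (D) of Corollary~\ref{cor-def-bp-for-split-tori}, and the diagonal upper bound comes from Theorem~\ref{thm-partial-alpha-proportional-to-omega-upp-bound-on-bpL-alpha} combined with Gromov's non-existence of weakly exact Lagrangians in $\C^2$. The paper additionally mentions the homogeneity property in $\bx$, but your direct argument does not need it, and nothing is missing.
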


\begin{figure}[h]
    \centering
    \includegraphics[scale=0.72]{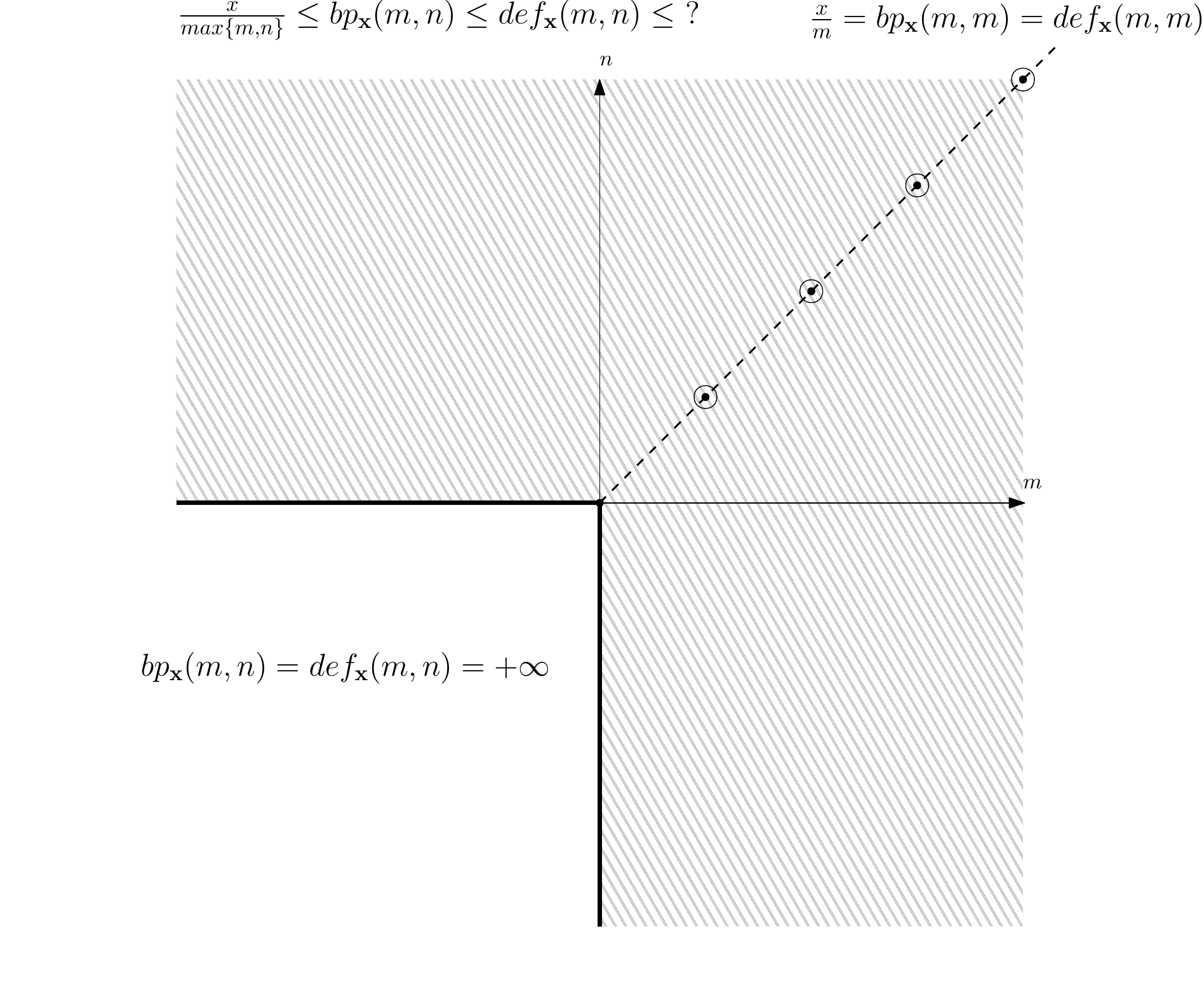}
    \caption{Regions of validity in Corollary~\ref{cor-def-bp-for-split-monotone-tori-in-C2} for monotone Lagrangian tori.}
    \label{figure-bp-diagram-3}
\end{figure}

\vfil
\eject
\bigskip


\noindent{\sc Chekanov tori in $\C^2$}
\medskip
\medskip

Chekanov tori $\Theta_a$, $a>0$, in $\C^2$ were originally introduced
in \cite{Chekanov-tori-in-R2n} (cf. \cite{El-Polt-survey}). The
torus $\Theta_a\subset \C^2=\C\times\C$ is defined as follows:
consider the first open quadrant $Q$ of $\C$ and a point $q\in Q$.
Fix a foliation of $Q\setminus q$
by simple closed curves (each winding once around $q$) so that for each $a>0$
there is exactly one curve in the foliation that bounds a disk of area $a$ in $Q$.

Pick $a>0$ and let $\eta (t)$ be the counterclockwise regular parameterization of the corresponding curve in the foliation.
Then
\[
\Theta_a := \Bigg\{ \frac{1}{\sqrt{2}} \bigg( e^{2\pi i s} \eta
(t), e^{-2\pi i s} \eta (t)\bigg)\Bigg\}.
\]
Consider the basis $\Gamma,\gamma$ of $H_1 (\Theta_a)$, where
$\Gamma,\gamma$ are, respectively, the homology classes of the
curves $t\mapsto \frac{1}{\sqrt{2}} (\eta (t), \eta (t))$ and
$s\mapsto\tfrac{1}{\sqrt{2}} (e^{2\pi i s} \eta (0), e^{-2\pi i s} \eta
(0))$ in $\Theta_a$. Let $\hGamma,\hgamma$ be the integral basis of
$H^1 (\Theta_a)$ dual to $\Gamma,\gamma$. Define the functions
$\df_a: (\Z^2)^* \to (0,+\infty]$,
$\bp_a: (\Z^2)^* \to (0,+\infty]$ by
\[
\df_a (m,n) := \df_{\Theta_a} (m\hGamma+n\hgamma),
\ \ \bp_a (m,n) := \bp_{\Theta_a} (m\hGamma+n\hgamma).
\]

\begin{thm}
\label{thm-chekanov-tori-v2}
The following claims hold for the Chekanov
tori in $\C^2$:

\smallskip
\noindent
(A) $\bp_a (m, n) \geq a/m$, if $m > 0$.

\smallskip
\noindent
(B) $\bp_a (m, n) = \df_a (m,n)=  +\infty$ for $m < 0$.
\end{thm}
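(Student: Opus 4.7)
The plan is to exploit the Hamiltonian $S^1$-symmetry of $(\C^2,\omega)$ generated by $H=\pi(|z_1|^2-|z_2|^2)$. The Chekanov torus lies inside $\mu^{-1}(0)=\{|z_1|=|z_2|\}$ and is $S^1$-invariant, with $S^1$-orbits realizing the class $\gamma$. Marsden--Weinstein reduction at level zero identifies $\mu^{-1}(0)/S^1$ (off the singular point) with $\C^*$ via $(z_1,z_2)\mapsto z_1z_2=:w$, and the image of $\Theta_a$ in the reduction is the simple closed curve $\bar\eta=\eta^2/2\subset\C^*$. A short computation identifying the diagonal disk of area $a$ with its projection shows that $\bar\eta$ bounds reduced area $a$ on the side not containing $0$, while on the other side the area is infinite since the foliation of $Q\setminus q$ extends to curves enclosing arbitrarily large area.

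For part~(A), I would first settle the case $\alpha=m\hGamma$. The fibration $\Theta_a\to\SP^1$ dual to $\alpha$ is then $S^1$-invariant and factors as the reduction $\Theta_a\to\bar\eta$ followed by the degree-$m$ self-cover of $\SP^1$, so the quadruple $(X_0,X_1,Y_0,Y_1)$ is the preimage of a standard four-arc quadruple on $\bar\eta$. One applies Theorem~\ref{thm-pbL-surfaces} to $\bar\eta$ with $A_+=a$, obtaining functions $\bar F,\bar G$ on the reduced surface with $\sup|\{\bar F,\bar G\}|_{\omega_{\text{red}}}\le m/a+\varepsilon$, pulls them back by $(z_1,z_2)\mapsto z_1z_2$, and multiplies by a smooth cutoff in $H$ supported in a thin slab around $\mu^{-1}(0)$. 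Because the lifted $F,G$ are $S^1$-invariant, they Poisson-commute with $H$, so the cutoff contributes nothing to the bracket, and a direct identification of the lifted bracket with the reduced one on $\mu^{-1}(0)$ gives $\bp_a(m,0)\ge a/m$. For general $\alpha=m\hGamma+n\hgamma$ with $\gcd(m,n)=1$ and $m>0$, the fibration $f_{m,n}(t,s)=mt+ns$ is no longer $S^1$-invariant; I would modify $F,G$ inside a Weinstein neighborhood of $\Theta_a$ so that they depend only on the combined coordinate $u=mt+ns$ (hence Poisson-commute there), and extend to $\C^2$ by interpolating with the $S^1$-reduction construction in the $\hGamma$-direction, arranging the cutoffs so that the sharp bound $a/m$ from the $\hGamma$-component is preserved.

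For part~(B), since $\bp_a\le\df_a$ by Theorem~\ref{thm-bpL-leq-dfL-introduction}, it is enough to prove $\bp_a(m,n)=+\infty$ whenever $m<0$. In the reduced picture, the direction $-|m|e\in H^1(\bar\eta)$ selects the side of $\bar\eta$ containing the puncture at $0$, whose reduced area is infinite; thus Theorem~\ref{thm-pbL-surfaces} yields $\bp_{\bar\eta}(-|m|e)=+\infty$, i.e.\ one can find reduced $\bar F,\bar G$ with $\sup|\{\bar F,\bar G\}|_{\omega_{\text{red}}}$ arbitrarily small. The same $S^1$-equivariant lifting as in part~(A), with the $n\hgamma$-component again absorbed into a Weinstein-neighborhood twist along the fiber, then produces $F,G$ on $\C^2$ with $\sup|\{F,G\}|$ arbitrarily small, giving $\bp_a(m,n)=+\infty$ and hence $\df_a(m,n)=+\infty$.

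The main obstacle I expect is the $n\ne 0$ case of part~(A): because the $S^1$-action translates $f_{m,n}$ by $n\tau$ along $\SP^1$, we have $S^1\!\cdot\! X_0=\Theta_a$, so no $S^1$-invariant $F$ can separate $X_0$ from $X_1$. The construction must carefully intertwine the reduced-space Samvelyan-type functions (which carry the rigidity producing the sharp $a/m$ bound) with a Weinstein-neighborhood twist implementing the $\hgamma$-component of the class, while keeping the cut-off errors below the $a/m$ threshold; a subsidiary technical point is verifying that the numerical constants in the reduced form $\omega_{\text{red}}$ match so that the bound is exactly $a/m$ rather than a multiple of it.
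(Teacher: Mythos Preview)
Your reduction approach is natural and, for the case $n=0$, essentially agrees with the paper's setup: the paper also starts from the coisotropic $N=\Xi(\SP^1\times\cD)\subset\mu^{-1}(0)$, and its Proposition~\ref{prop-nbhds-chekanov-torus} packages the reduction picture as an explicit product neighborhood $U(r)\simeq D(r)\times T^*_r\SP^1$ of $\Theta_a$ (the $D(r)$-factor is the reduced disk and $T^*_r\SP^1$ is the normal to the coisotropic). A crucial extra ingredient you are missing is the conformal rescaling trick: pushing $\Theta_{a'}$ for $a'\to 0$ forward by $z\mapsto\sqrt{a/a'}\,z$ yields neighborhoods with $r$ arbitrarily large, which is what gives the unbounded outer area needed for part~(B) and the room needed for the cutoff in part~(A).

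The genuine gap is exactly the one you flag: the case $n\neq 0$. Your proposed ``interpolation between a Weinstein-neighborhood twist and the $S^1$-reduction construction'' is not a construction, and there is no evident way to keep $\max\{F,G\}$ at the sharp value $m/a$ once $F,G$ cease to be $S^1$-invariant. The paper sidesteps this entirely by switching to the \emph{dynamical} description of $\bp_L$ (Theorem~\ref{thm-pb4-dynamical-meaning}) rather than working with $\{F,G\}$. In the coordinates $(x,y,\theta,p)$ on $D(k)\times T^*_k\SP^1$ one takes a single Hamiltonian
\[
H(x,y,\theta,p)=G\bigl(R(n\theta)(x,y)\bigr)\,\beta(p),
\]
where $G$ is a 2D function on a sector of the disk with no chords from $\gamma_1$ to $\gamma_3$ shorter than $a/m-\varepsilon$, $R(n\theta)$ is rotation by $2\pi n\theta$, and $\beta$ is a cutoff with $\beta\equiv 1$ on $\{|p|\le C\}$. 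The rotation $R(n\theta)$ is precisely what aligns the level sets of $H$ with the twisted quadruple \eqref{eqn-admissible-quadruple} associated to $m\hGamma+n\hgamma$. Since $\beta$ is flat on $\{|p|\le C\}$ one has $\dot\theta=\partial_p H=0$ there, so chords stay in fixed-$\theta$ slices and project to chords of $G$; a gradient bound on $G$ together with the choice $C>4a|n|$ (which requires the large neighborhood from the rescaling trick) keeps the chord inside $\{|p|\le C\}$ for time $a/m$. This is the missing idea: rather than trying to lift Poisson-commuting pairs, build one twisted Hamiltonian whose flow is fiberwise-2D on the relevant time interval. For part~(B) the same construction applies with $G$ now using the outer region $D(k)\setminus D(a)$, whose area tends to infinity with $k$.
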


For the proof see Section~\ref{subsec-pf-thm-chekanov-tori-v2}.

\bigskip
\noindent{\sc Lagrangian tori in $\C^n$ for general $n\in\N$}
\medskip
\medskip

We first generalize the statement of Theorem~\ref{thm-lagrangian-tori-in-R4}
to Lagrangian tori in $\C^n$ for a general $n \in \N$.

Let $L \subset (\C^n, \omega)$ be a Lagrangian torus.

As above, we say that an almost complex structure $J$ (on $\C^n$) compatible
with $\omega$ is {\it regular (for $L$) with respect to a point $p\in L$}, if for any $C\in H_2 (\C^n, L)$ the moduli
space of non-parameterized somewhere injective $J$-holomorphic disks in $\C^n$ with boundary on
$L$ and with one marked point that represent the class $C$ and pass through $p$ (that is, the marked
point coincides with $p$) is a (transversally cut out)
 smooth manifold of the expected dimension. A generic $J$ (on $\C^n$) compatible
with $\omega$ has this property (see Section~\ref{sec-lagr-tori-in-Cn-proofs}).

\begin{thm}
\label{thm-lagr-tori-in-Cn} \
Assume
$H_2(\C^n, L) \simeq \Z \langle A_1, \ldots, A_n \rangle$, where
\[
\mu(A_i) = 2, \quad i = 1, \ldots, n,
\]
and
\[
\omega(A_1) =: a > 0, \quad \omega(A_2) = \ldots = \omega(A_n) =: b.
\]
Let $\alpha \in H^1(L; \R)$ so that
\[
\partial\alpha(A_1) =: \sigma > 0,\quad \partial\alpha(A_i) =: \rho,\;\; i = 2, \ldots, n.
\]
Assume
\[
\rho/\sigma \leq \frac{n+2}{2} \leq b/a, \ \textit{if}\ n\ \textit{is even},
\]
\[
\rho/\sigma \leq \frac{n+3}{2} \leq b/a, \ \textit{if}\ n\ \textit{is odd}.
\]
Then

\medskip
\noindent
(A) For an almost complex structure $J$ compatible with $\omega$ and regular
(for $L$) with respect to a point $p\in L$ the mod-2 number $n_{A_1} (p,J)$ of
(non-parameterized somewhere injective) $J$-holomorphic disks with boundary
in $L$ in the class $A_1$ with one marked point passing through $p$ is
well-defined and independent of the choice of $p$ and $J$.

\smallskip
\noindent
(B) If $n_{A_1} (p,J)\neq 0$ for some $p$ and $J$ as in (A), then
\[
\df_L(\alpha) \leq \frac{\omega(A_1)}{\sigma} = \frac{a}{\sigma}.
\]
\end{thm}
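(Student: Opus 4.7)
The plan is to generalize the proof of Theorem~\ref{thm-lagrangian-tori-in-R4} from $n=2$ to arbitrary $n$, with the single bubble candidate class $B$ of the $\C^2$ setting replaced by the enlarged family of admissible classes $B = \sum_i b_i A_i \in H_2(\C^n, L)$. Throughout, the central object is the moduli space of somewhere injective $J$-holomorphic disks in class $A_1$ with one boundary marked point constrained to pass through $p \in L$; its expected dimension is $n + \mu(A_1) - 2 - n = 0$. Since $A_1$ generates a $\Z$-summand of $H_2(\C^n, L)$, every $J$-disk in $A_1$ is automatically simple, so the somewhere injective hypothesis comes for free.

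For Part (A), standard transversality makes this $0$-dimensional moduli space smooth for generic $\omega$-compatible $J$, yielding the mod-$2$ count $n_{A_1}(p, J)$. Invariance under $(p, J)$ would follow from a cobordism through a generic $1$-parameter family $(p_s, J_s)$, provided all disk bubbling is excluded. A bubble corresponds to a splitting $A_1 = B + C$ into two nonconstant $J$-disk classes with $\mu(B) + \mu(C) = 2$. Writing $B = \sum_i b_i A_i$, setting $S := \sum_{i \geq 2} b_i$ and $k := -\tfrac{1}{2}\mu(B)$ (so $b_1 = -k - S$), a direct computation gives $\omega(B) = -(k+S)a + Sb$, and the simultaneous positivity of $\omega(B)$ and $\omega(C) = a - \omega(B)$ translates, for $S > 0$, into $b/a \in (1 + k/S,\, 1 + (k+1)/S)$. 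Generic regularity of simple $J$-disks in $B$ requires $\mu(B) \geq 1-n$, i.e.\ $k \leq \lfloor (n-1)/2 \rfloor$, and the widest admissible strip (attained at $S = 1$, $k = \lfloor (n-1)/2 \rfloor$) has upper bound $(n+c)/2$. The hypothesis $b/a \geq (n+c)/2$ then rules out every admissible bubbling configuration.

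For Part (B), I would argue by contradiction: suppose a Lagrangian isotopy $\psi = \{\psi_t\}_{0 \leq t \leq T}$ with $\Flux(\psi)_t = -t\alpha$ exists on $[0, T]$ with $T > a/\sigma$. Set $L_t := \psi_t(L)$ and transport the classes $A_i$ along the isotopy to $A_i^t \in H_2(\C^n, L_t)$; then $\omega(A_1^t) = a - t\sigma$ and $\omega(A_i^t) = b - t\rho$ for $i \geq 2$. The hypothesis $\rho/\sigma \leq b/a$, contained in $\rho/\sigma \leq (n+c)/2 \leq b/a$, guarantees that $r(t) := (b - t\rho)/(a - t\sigma)$ is non-decreasing on $[0, a/\sigma)$, so $r(t) \geq b/a \geq (n+c)/2$ throughout. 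The strip exclusion of Part (A), applied to the transported classes at each $t$, then rules out bubbling of $A_1^t$ uniformly in $t$. For a generic $1$-parameter family $J_t$ and transported basepoint $p_t := \psi_t(p)$, parametric transversality yields that the mod-$2$ count $n_{A_1^t}(p_t, J_t)$ is constant in $t \in [0, a/\sigma)$, and equals $n_{A_1}(p, J) \neq 0$. However, as $t \nearrow a/\sigma$ the area $\omega(A_1^t) \searrow 0$, and Gromov compactness (combined with the absence of bubbling) forces any accumulation to a constant disk, contradicting $\mu(A_1) = 2$. Hence $T \leq a/\sigma$, and therefore $\df_L(\alpha) \leq a/\sigma$.

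The principal technical obstacle will be the careful enumeration of admissible bubbling configurations, which must include both orderings of the $(B, C)$ split and potential multiply covered bubbles, so as to verify that every admissible strip has upper bound at most $(n+c)/2$. A secondary subtlety is the coordination of parametric transversality with the Lagrangian isotopy $\psi$: one needs to choose $(J_t, p_t)$ generically while respecting the flux constraint, and the strict bound $\rho/\sigma \leq (n+c)/2$ (beyond the weaker $\rho/\sigma \leq b/a$ required merely for monotonicity of $r$) is expected to play a role in controlling borderline degenerations and securing the uniform strip exclusion.
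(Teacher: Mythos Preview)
Your proposal is essentially correct and follows the paper's strategy: exclude bubbling of $J$-disks in class $A_1$ by combining a dimension count (for very negative Maslov) with an area bound (for intermediate Maslov), then run a cobordism argument. Your ``strip'' computation $b/a \in (1+k/S,\,1+(k+1)/S)$ is algebraically equivalent to the paper's key Lemma, which shows directly that any disk with Maslov index $-2l$, $l\in\{0,\dots,\lfloor(n-1)/2\rfloor\}$, has $\omega_t$-area at least $a(t)=a-\sigma t$; the paper then combines this with the Kwon--Oh decomposition and the codimension-$2$ exclusion of simple disks with Maslov $<2-n$ (for $n$ even) to rule out arbitrary bubble trees, not just two-component splittings $A_1=B+C$.

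Two small points to tighten. First, your framing as ``a splitting $A_1=B+C$ into two $J$-disk classes'' undersells what your computation actually proves: the positivity of $\omega(C)=a-\omega(B)$ uses nothing about $C$ being a disk, so you have really shown that \emph{no homology class} $B$ with $\mu(B)\in\{2-n,\dots,0\}$ (resp.\ $\{1-n,\dots,0\}$ for $n$ odd) has area in $(0,a)$ --- exactly the paper's Lemma. This immediately handles multi-component trees and multiply-covered components once you invoke Kwon--Oh to pass to simple pieces. Second, your suspicion that $\rho/\sigma\le (n+c)/2$ is needed ``beyond monotonicity of $r$'' is unfounded in your formulation: monotonicity of $r(t)$ (from $\rho/\sigma\le b/a$) together with $r(0)=b/a\ge(n+c)/2$ already gives $r(t)\ge(n+c)/2$ on $[0,a/\sigma)$. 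The paper uses the intermediate inequality because its Lemma is proved via the linear-in-$t$ expression $b(t)-(l+2)a(t)=\big(b-(l+2)a\big)+t\big((l+2)\sigma-\rho\big)$, whose nonnegativity for all $t\ge 0$ uses both endpoints of the chain $\rho/\sigma\le(n+c)/2\le b/a$ separately.
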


For the proof see Section~\ref{sec-lagr-tori-in-Cn-proofs}.

\begin{rem}
\label{rem-thm-lagrangian-tori-in-R4-R2n}
{\rm
For Lagrangian tori $L\subset \C^2$ satisfying
$H_2(\C^2, L) \simeq \Z \langle A_1, A_2 \rangle$, where
$\mu(A_1) = \mu (A_2) = 2$, Theorem~\ref{thm-lagr-tori-in-Cn}
gives the same result as Theorem~\ref{thm-lagrangian-tori-in-R4}.
}
\end{rem}

\begin{rem}
\label{rem-extension-of-thm-lagrangian-tori-in-R2n}
{\rm
As it can be seen from the proof, and similarly to Remark~\ref{rem-extension-of-thm-lagrangian-tori-in-R4},
Theorem~\ref{thm-lagr-tori-in-Cn} remains true if $\C^n$ is replaced by any $2n$-dimensional symplectic
manifold $(M,\omega)$ which satisfies $\omega|_{\pi_2 (M)} = c_1|_{\pi_2 (M)} = 0$, is geometrically
bounded in the sense of \cite{ALP-book}, or convex at infinity in the sense of \cite{Eliash-Gromov}, and
the following holds:
\begin{equation}
\label{eqn-extension-of-thm-lagrangian-tori-in-R2n}
H_2(M, L) \simeq \Z \langle A_1, \ldots, A_m \rangle \oplus \im (\pi_2 (M) \to \pi_2 (M,L)),
\end{equation}
for some $m\in\N$ (not necessarily equal to $n$!), where
\[
\mu(A_i) = 2, \quad i = 1, \ldots, m,
\]
\[
\omega(A_1) =: a > 0, \quad \omega(A_2) = \ldots = \omega(A_m) =: b,
\]
and $\alpha \in H^1(L; \R)$ satisfies
\[
\partial\alpha(A_1) =: \sigma > 0,\quad \partial\alpha(A_i) =: \rho, \;\;i = 2, \ldots, m,
\]
\[
\rho/\sigma \leq \frac{n+2}{2} \leq b/a, \ \textit{if}\ n\ \textit{is even},
\]
\[
\rho/\sigma \leq \frac{n+3}{2} \leq b/a, \ \textit{if}\ n\ \textit{is odd}.
\]

In such a case Theorem~\ref{thm-lagr-tori-in-Cn} can be applied not only to $M,L$ and
$\alpha$ but also to $(\hM:= M\times T^* \SP^1, \homega:= \omega\oplus d\theta \wedge dr)$
(where $\theta\in \SP^1, r\in \R$ are the standard coordinates on $T^* \SP^1$),
$\hL:= L\times \SP^1\subset \hM$ (where $\SP^1 = \{r=0\}$ is the zero-section of $T^* \SP^1$)
and $\halpha$, which is the image of $\alpha$ under the inclusion $H^1 (L;\R)\to H^1 (\hL;\R)
= H^1 (L;\R) \oplus\R$. (Note that $H_2(M, L) \simeq H_2 (\hM,\hL)$).

Moreover, if an almost complex structure $J$ on $M$ is compatible with $\omega$ and regular
(for $L$) with respect to a point $p\in L$, then $\hJ := J \oplus j$ (where $j$ is the standard
complex structure on $T^* \SP^1$) is an almost complex structure on $\hM$ compatible with
$\homega$ and regular (for $\hL$) with respect to a point $\hp := p\times q \in \hL = L\times \SP^1$
(where $q$ is a point in $\SP^1$), and $n_{A_1} (p,J) = n_{A_1} (\hp, \hJ)$.

Then, if $\alpha\in H^1 (L)$, we can apply Theorem~\ref{thm-lagr-tori-in-Cn} to $\hM, \hL, \halpha$
and, together with Theorem~\ref{thm-bpL-leq-dfL-introduction}, it yields
\[
\bp_\hL(\halpha) \leq \df_\hL(\halpha) \leq \frac{a}{\sigma},
\]
or, equivalently,
\[
pb_4^+ (X_0\times \SP^1, Y_0\times \SP^1, X_1\times \SP^1, Y_1\times \SP^1) \geq \frac{\sigma}{a},
\]
where $X_0,X_1, Y_0,Y_1$ are the closed subsets of $L= X_0\cup X_1\cup Y_0\cup Y_1$ used
to define $\bp_L(\alpha)$.

By \cite{EP-tetragons} (cf. Theorem~\ref{thm-pb4-dynamical-meaning} below), this yields the
existence of connecting trajectories from $X_0$ to $X_1$ for Hamiltonian flows (defined for all times)
generated by time-periodic Hamiltonians $H: M\times \SP^1\to \R$ such that $\Delta_H
:= \min_{Y_1\times \SP^1} H - \max_{Y_0\times \SP^1} H  > 0$. Such a connecting trajectory will
have time-length $\displaystyle \leq \frac{a}{\sigma\Delta_H}$.
}
\end{rem}

Note that Theorem~\ref{thm-lagr-tori-in-Cn} applies to certain split Lagrangian tori in
$(\C^n,\omega)$ and the standard basis $A_1, \ldots, A_n$ of
$H_2 (\C^n, T^n(\bx))$ -- indeed, for the standard complex structure $J$ on $\C^n$
(which is, of course, compatible with $\omega$) there is exactly one (non-parametrized)
$J$-holomorphic disk in the class $A_1$ passing through any point of $T^n(\bx)$.
The regularity of $J$ (for $T^n(\bx)$) with respect to any $p\in T^n(\bx)$ follows again from \cite{Cho-Oh}.

The following results give partial information about the function
$\bp_{\bx}$ for split Lagrangian tori. Set $x_{min} := \min\ \{x_1,\ldots,x_n\}$. Denote by
$e_1,\ldots,e_n$ the standard generators of $H^1 (T^n(\bx))\cong
(\Z^n)^*$.

\begin{thm}
\label{thm-computations-for-split-tori-in-Cn} \

\smallskip
\noindent
(A) If $m_i \leq 0$ for all $i=1,\ldots,n$, then
\[
\bp_{\bx}(m_1, \ldots, m_n)=\df_{\bx}(m_1, \ldots, m_n) = +\infty.
\]
Otherwise
\[
\min_{i, m_i>0}\ x_i/m_i \leq \bp_{\bx}(m_1, \ldots, m_n).
\]

\smallskip
\noindent
(B) If $x_1=\ldots=x_n=:x$ and $k \in \N$, then
\[
\bp_{\bx}(k,\ldots,k) = \df_{\bx}(k,\ldots,k)  = x/k.
\]

\smallskip
\noindent
(C) Let $l \in \N$ and assume $x_{min} < x_i / l$. Then for all
$k \in \N$ with $k \leq l$,
\[
\bp_{\bx}(k e_i) = \df_{\bx}(k e_i) = +\infty.
\]

\end{thm}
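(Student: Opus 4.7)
The plan treats the three parts in order, with parts (A) and (B) being formal consequences of results already established in the paper and part (C) carrying the substantive "soft" content.

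For part (A), I apply Theorem~\ref{thm-toric-case-low-bound} to $\C^n$ equipped with its standard Hamiltonian $\T^n$-action, whose moment polytope is $\Delta = \R^n_{\geq 0}$ and whose interior fiber over $\bx$ is precisely $T^n(\bx)$. A direct computation gives $l_\bx(\alpha)=+\infty$ when all $m_i\leq 0$ and $l_\bx(\alpha)=\min_{i:\,m_i>0} x_i/m_i$ otherwise, which yields the lower bound on $\bp_\bx$; the inequality $\bp_L\leq\df_L$ of Theorem~\ref{thm-bpL-leq-dfL-introduction} then promotes $\bp_\bx=+\infty$ to $\df_\bx=+\infty$. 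For part (B), specialising part (A) to $\bx=(x,\ldots,x)$ and combining with $\bp_\bx\leq\df_\bx$ yields $x/k\leq\bp_\bx(k,\ldots,k)\leq\df_\bx(k,\ldots,k)$. The matching upper bound comes from Theorem~\ref{thm-partial-alpha-proportional-to-omega-upp-bound-on-bpL-alpha}: $\C^n$ admits no weakly exact Lagrangian submanifold by Gromov's theorem~\cite{Gromov}, and one verifies $\partial\alpha=(k/x)[\omega]_L$ in $H^2(\C^n,T^n(\bx);\R)$ by pairing both sides with the standard disk generators $A_1,\ldots,A_n$ of $H_2(\C^n,T^n(\bx))$, on each of which they both evaluate to $k$; taking $C=x/k$ gives $\df_\bx\leq x/k$, closing the sandwich.

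Part (C) is the main substantive content. After permuting coordinates I may assume $x_{\min}=x_1$, so the hypothesis becomes $x_1<x_i/l$ with $\alpha=ke_i$ and $k\leq l$. Using the product splitting $T^n(\bx)=T^2(x_1,x_i)\times T^{n-2}$ inside $\C^2\times\C^{n-2}=\C^n$, any Lagrangian isotopy of $T^2(x_1,x_i)\subset\C^2$ with flux $-tke_i$ extends by the identity on the remaining factors to a Lagrangian isotopy of $T^n(\bx)$ in $\C^n$ with the same flux, and any pair of Hamiltonians on $\C^2$ witnessing $\bp_{(x_1,x_i)}(ke_i)=+\infty$ extends by pull-back along the projection $\C^n\to\C^2$. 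The problem therefore reduces to the two-dimensional case $x_1<x_2/l$, $\alpha=ke_2$, $k\leq l$.

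In this two-dimensional case my plan is to construct, for each $T>0$, both a Lagrangian isotopy $\{\psi_t\}_{0\leq t\leq T}$ of $T^2(x_1,x_2)\subset\C^2$ with $\Flux(\psi)_t=-tke_2$ and a pair of Hamiltonians $(F_T,G_T)$ separating the quadruple $(X_0,X_1,Y_0,Y_1)$ that computes $\bp_\bx(ke_2)$ with $\|\{F_T,G_T\}\|_\infty$ arbitrarily small. The geometric mechanism behind both constructions is the same: the hypothesis $x_1<x_2/l$ provides, in the $z_2$-plane, a wide annular region disjoint from the area-$x_1$ disk that supports the first factor, in which the relevant data can be chosen with arbitrarily small slope in $|z_2|$ while varying freely in $\theta_2$; consequently the torus can be "wound" $k\leq l$ times relative to the $z_2$-factor before exhausting this room, and the Hamiltonians can be tailored so that their Poisson bracket is supported where at least one of them is locally constant. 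The main obstacle, and the bulk of the technical work, will be converting this heuristic into honest Lagrangian embeddings throughout the isotopy and into globally defined smooth Hamiltonians with the prescribed separation and small-bracket properties; the embeddedness check and the careful interpolation near the divisor $\{z_1=0\}$ are where an action-angle analysis seems unavoidable.
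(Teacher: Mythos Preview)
Your treatment of parts (A) and (B) is correct and coincides with the paper's argument.

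For part (C) there is a genuine gap. The reduction to the two-factor situation via the product inequality (Proposition~\ref{prop-product-property-of-pbL}) is valid, and once $\bp_\bx(ke_i)=+\infty$ is established the equality $\df_\bx(ke_i)=+\infty$ follows from $\bp\leq\df$, so your separate plan to build Lagrangian isotopies is redundant. The problem is the proposed construction in $\C^2$: the heuristic is incoherent as written (the ``area-$x_1$ disk that supports the first factor'' lies in the $z_1$-plane, not the $z_2$-plane, so there is no annular region ``disjoint from'' it there), the ``winding $k\leq l$ times'' language is a flux picture rather than a $pb_4^+$ picture, and the claim that $\{F,G\}$ can be arranged to be supported where one of $F,G$ is locally constant amounts to asserting $\{F,G\}\equiv 0$ without any mechanism for achieving this while meeting the separation conditions on $X_0,X_1,Y_0,Y_1$. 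Nothing in your sketch actually uses the inequality $x_1<x_i/k$ in a way that forces $pb_4^+=0$.

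The paper's mechanism is quite different and worth knowing. After replacing each circle $\pi|z_j|^2=x_j$ by the boundary $\Pi_j$ of a square of the same area (justified by semi-continuity and symplectic invariance of $pb_4^+$), it invokes the dynamical characterisation (Theorem~\ref{thm-pb4-dynamical-meaning}) and exhibits a single complete Hamiltonian
\[
H(p,q)=p_1+G(p_n,q_n)
\]
with $\min_{Y_1}H-\max_{Y_0}H>0$ and \emph{no} chord from $X_0$ to $X_1$ at all. The $p_1$-term generates a translation in $q_1$ that moves $\Pi_1$ entirely off itself after time $\sqrt{x_1}$, while $G$ (depending only on $p_n$, taking values in $[0,C]$) is chosen so that any chord from $\hat X_0$ to $\hat X_1$ in the $n$-th factor requires time strictly greater than $\sqrt{x_1}$. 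The numerical constraint making both requirements compatible is precisely $x_n/k>x_1$: one needs a constant $C$ with $\sqrt{x_1}<C<(x_n/k-4\varepsilon)/\sqrt{x_1}$. This displacement-in-the-small-factor idea is the heart of part (C) and is absent from your plan.
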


For the proof see
Section~\ref{sec-pf-thm-computations-for-split-tori-in-Cn}.

Combining Theorem~\ref{thm-lagr-tori-in-Cn} and
Theorem~\ref{thm-computations-for-split-tori-in-Cn} we obtain
for certain split tori $T^n(\bx)$ the following result.

\begin{cor}
Assume $x_2 = \ldots = x_n =: y$ and let $0 < x_1 \leq 2y/(n+2)$
if $n$ is even and $0 < x_1 \leq 2y/(n+1)$ if $n$ is odd.
For $m_1 \in \N$ let the integer $m_2 = \ldots = m_n \leq y\, m_1 /x_1$.
Then we have
\[
\bp_{\bx}(m_1, \ldots, m_n) = \df_{\bx}(m_1, \ldots, m_n) = x_1/m_1.
\]
In particular, for any $m_1\in \N$
\[
\bp_{\bx}(m_1 e_1) = \df_{\bx}(m_1 e_1)= x_1/m_1.
\]
Furthermore, assume for $l \in \N$ that $x_1 < y/l$. Then for all
$1 \leq k \leq l$ and all $2 \leq i \leq n$ we have
\[
\bp_{\bx}(k e_i) = \df_{\bx}(k e_i) = +\infty.
\]
\end{cor}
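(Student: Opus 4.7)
The plan is to sandwich the common value $x_1/m_1$ between a lower bound on $\bp_\bx$ and an upper bound on $\df_\bx$, using Theorem~\ref{thm-bpL-leq-dfL-introduction} to interpolate. For the lower bound I would invoke part (A) of Theorem~\ref{thm-computations-for-split-tori-in-Cn}: it yields
$\bp_\bx(m_1,\ldots,m_n) \geq \min_{i,\, m_i>0} x_i/m_i$,
and since $x_i = y$ for $i\geq 2$ while $m_i \leq y\,m_1/x_1$ forces $y/m_i \geq x_1/m_1$, the minimum is attained at $i=1$ and equals $x_1/m_1$.

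For the matching upper bound $\df_\bx(\alpha) \leq x_1/m_1$ I would apply Theorem~\ref{thm-lagr-tori-in-Cn} to the split torus $L = T^n(\bx)$ using the standard basis $A_1,\ldots,A_n$ of $H_2(\C^n, L)$, for which $\mu(A_i) = 2$, $\omega(A_1) = x_1 =: a$ and $\omega(A_i) = y =: b$ for $i \geq 2$. Writing $\alpha = \sum m_i e_i$ yields $\sigma := \partial\alpha(A_1) = m_1$ and $\rho := \partial\alpha(A_i) = m_2$ for $i \geq 2$, and the parity-dependent hypothesis on $x_1$ is arranged precisely so that the chain $\rho/\sigma \leq (n+2)/2 \leq b/a$ (respectively with $(n+3)/2$ for $n$ odd) demanded by Theorem~\ref{thm-lagr-tori-in-Cn} is verified. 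Regularity of the standard integrable $J$ on $\C^n$ with respect to $L$ and the fact that $n_{A_1}(p,J) = 1$ (from the unique standard Maslov-$2$ disk in class $A_1$ through any $p \in L$) are supplied by Cho--Oh, as already noted in the paper, so $n_{A_1}\neq 0$ and the theorem delivers $\df_\bx(\alpha) \leq a/\sigma = x_1/m_1$.

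Combining the two bounds gives $x_1/m_1 \leq \bp_\bx(m_1,\ldots,m_n) \leq \df_\bx(m_1,\ldots,m_n) \leq x_1/m_1$, establishing the displayed equalities. The ``in particular'' case $\alpha = m_1 e_1$ is the specialization $m_2 = \ldots = m_n = 0$, which trivially satisfies $m_i \leq y\,m_1/x_1$. The final assertion $\bp_\bx(k e_i) = \df_\bx(k e_i) = +\infty$ for $2 \leq i \leq n$ and $1 \leq k \leq l$ is immediate from part (C) of Theorem~\ref{thm-computations-for-split-tori-in-Cn}: under $x_1 < y/l$ one has $x_{\min} = x_1 < x_i/l$ for each such $i$, so that part applies directly.

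The main obstacle, such as it is, lies in the bookkeeping needed to verify that the chain of ratio inequalities required by Theorem~\ref{thm-lagr-tori-in-Cn} is indeed implied by the parity-dependent bound on $x_1$ together with the integrality constraint $m_2 \leq y\,m_1/x_1$. Beyond this verification the argument is a direct citation of previously established results; no new pseudo-holomorphic curve counts or dynamical constructions are required, since all of the relevant rigidity is already packaged into Theorems~\ref{thm-lagr-tori-in-Cn}--\ref{thm-computations-for-split-tori-in-Cn}.
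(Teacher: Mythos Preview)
Your approach matches the paper's exactly: the corollary is stated there as an immediate combination of Theorem~\ref{thm-lagr-tori-in-Cn} and Theorem~\ref{thm-computations-for-split-tori-in-Cn}, and you have correctly identified the lower bound from part~(A), the upper bound from the disk count (with Cho--Oh supplying regularity and $n_{A_1}=1$), the interpolation via $\bp_L\leq\df_L$, and the final clause from part~(C).

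There is, however, a genuine gap precisely at the point you flag as ``bookkeeping''. The hypothesis $m_2\leq y\,m_1/x_1$ yields only $\rho/\sigma\leq b/a$, which can exceed $(n+2)/2$; and for $n$ odd, $x_1\leq 2y/(n+1)$ gives $b/a\geq (n+1)/2$, not the $(n+3)/2$ that Theorem~\ref{thm-lagr-tori-in-Cn} demands. So the chain of inequalities is \emph{not} verified from the corollary's hypotheses as stated, and the theorem cannot be cited verbatim. (Compare Corollary~\ref{cor-def-bp-for-split-tori}(E), where the analogous constraint is $n\leq 2m$, i.e.\ $\rho/\sigma\leq 2=(n{+}2)/2$, rather than $\rho/\sigma\leq b/a$.) This mismatch is present in the paper too, which gives no details; the likely resolution is that the proof of Theorem~\ref{thm-lagr-tori-in-Cn} actually works under the weaker assumption $\rho/\sigma\leq b/a$ together with $(n+2)/2\leq b/a$, since the relevant quantity $b(t)-\tfrac{n+2}{2}a(t)$ is linear in $t$ and nonnegative at both endpoints $t=0$ and $t=a/\sigma$. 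But that requires going back into the proof of the theorem rather than invoking it as a black box.
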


\subsection{Lagrangian tori in complex projective spaces}
\label{subsec-cpn}

Let $M=\C P^n$ and let $\omega$ be the standard Fubini-Study
symplectic form on $\C P^n$ normalized so that $\int_{\C P^1}
\omega = 1$.

\begin{thm}\label{thm-tori-in-CPn}
Let $L \subset (\C P^n, \omega)$ be a Lagrangian torus and
$\alpha \in H^1(L)$. Consider $[\omega]_t := [\omega]_L -
t \partial\alpha \in H^2(\C P^n, L; \R)$ for $t \geq 0$.
If there exists a $C > 0$ such that $[\omega]_C \in
H^2(\C P^n, L; \frac{1}{n}\Z)$, then
\[
\df_L(\alpha) \leq C.
\]
\end{thm}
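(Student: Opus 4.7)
The plan is to argue by contradiction, using the Cieliebak--Mohnke rigidity theorem on the existence of small-area Maslov-$2$ holomorphic disks bounded by Lagrangian tori in $\C P^n$.

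First, I would assume $\df_L(\alpha) > C$ and choose a Lagrangian isotopy $\psi = \{\psi_t: L \to \C P^n\}_{0 \leq t \leq T}$ with $T \geq C$, $\psi_0 = \iota$, and $\Flux(\psi)_t = -t\alpha$ for all $t$. Setting $L' := \psi_C(L)$ and invoking the identity $[\omega]^\psi_t = [\omega]_L + \partial\Flux(\psi)_t$ from the introduction, the class $[\omega]^\psi_C$, viewed as an element of $H^2(\C P^n, L';\R)$ via the canonical isomorphism $H^2(\C P^n, L;\R) \cong H^2(\C P^n, L';\R)$ induced by $\psi$, coincides with the relative symplectic area class $[\omega]_{L'}$. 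The hypothesis on $[\omega]_C = [\omega]_L - C\partial\alpha$ then implies that every class $A \in H_2(\C P^n, L';\Z)$ has symplectic area $\omega(A) \in \tfrac{1}{n}\Z$.

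Next, I would invoke the theorem of Cieliebak and Mohnke \cite{Ciel-Mohnke}: for every Lagrangian torus in $(\C P^n, \omega_{FS})$ (normalized so that $\int_{\C P^1}\omega_{FS} = 1$) there exists a class $A \in H_2(\C P^n, L';\Z)$ with $\mu(A)=2$ and $0 < \omega(A) \leq \tfrac{1}{n+1}$. Applied to $L'$, this is incompatible with $\omega(A) \in \tfrac{1}{n}\Z$, since the intersection $(0,\tfrac{1}{n+1}]\cap \tfrac{1}{n}\Z$ is empty. This contradiction forces $\df_L(\alpha)\leq C$.

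The only substantive step is importing the Cieliebak--Mohnke estimate in its precise form: one must verify that it applies to arbitrary (not necessarily monotone) Lagrangian tori in $\C P^n$ and with a strict area bound $< 1/n$. Once this is granted, the rest is a direct translation --- in the same spirit as the proof of Theorem~\ref{thm-partial-alpha-proportional-to-omega-upp-bound-on-bpL-alpha} --- of the linearity of the Lagrangian flux path into a $\tfrac{1}{n}\Z$-divisibility constraint on all relative areas bounded by $L'$, which no Lagrangian torus in $\C P^n$ can satisfy. The borderline case $[\omega]_C = 0$, which would make $L'$ weakly exact, is in any event ruled out by the nonexistence of weakly exact Lagrangian submanifolds in $\C P^n$ (already used after Theorem~\ref{thm-partial-alpha-proportional-to-omega-upp-bound-on-bpL-alpha}).
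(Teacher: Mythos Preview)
Your proposal is correct and follows essentially the same route as the paper's proof: argue by contradiction, produce the Lagrangian torus $L' = \psi_C(L)$ whose relative area homomorphism takes values in $\tfrac{1}{n}\Z$, and invoke the Cieliebak--Mohnke bound $c \leq \tfrac{1}{n+1}$ on the smallest positive relative area to obtain a contradiction. The paper's version is slightly terser (it phrases the Cieliebak--Mohnke input as a bound on the positive generator of the area group rather than via a Maslov-$2$ disk), but the argument is the same.
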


For the proof see Section~\ref{sec-pf-thm-cpn-toric-fiber-upp-bound}.

We provide a precise statement in the case of Lagrangian
torus fibers. Consider the standard Hamiltonian $\T^n$-action
on $\C P^n$ and denote its moment map by $\Phi :\C P^n\to (\R^n)^*$.
Its image is the simplex
\[
\Delta := \{ (x_1,\ldots,x_n)\in (\R^n)^*\ |\ x_1, \ldots,
x_n\geq 0,\ 0\leq x_1+\ldots+x_n\leq 1\}.
\]
As in Section~\ref{subsec-toric-fibers-in-sympl-toric-mfds-intro},
for $\bx\in\Int \Delta$ denote $L_\bx :=\Phi^{-1} (\bx)$ the
corresponding Lagrangian torus fiber of $\Phi$ and set
\[
\df_\bx (\alpha) := \df_{L_\bx} (\alpha),\ \ \bp_\bx (\alpha)
:= \bp_{L_\bx} (\alpha)
\]
for each $\alpha \in (\Z^n)^* \cong H^1(L_\bx)$.

By Theorem~\ref{thm-toric-case-low-bound}, for all $\alpha \in
(\Z^n)^*$
\begin{equation}
\label{eqn-toric-fiber-low-bound}
l_\bx (\alpha) \leq \bp_\bx (\alpha).
\end{equation}

For certain $\alpha\in (\Z^n)^*$ and $\bx\in \Int \Delta$ we
obtain an upper bound on $\df_\bx (\alpha)$ from
Theorem~\ref{thm-tori-in-CPn}.

Namely, define $d_\bx (\alpha)$ as the smallest $t > 0$ for
which $\bx - t\alpha\in \frac{1}{n}\cdot (\Z^n)^*$ and let
$\cJ(\bx,\alpha)$ be the open segment of the same open ray
$\bx-t\alpha$, $t\in (0,+\infty)$, as above connecting $\bx$
and $\bx - d_\bx (\alpha) \alpha$:
\[
\cJ (\bx,\alpha) := \{ \bx - t \alpha,\ 0<t< d_\bx (\alpha)\}.
\]
If no such $t$ exists, set $d_\bx (\alpha):= +\infty$ and let $\cJ
(\bx,\alpha)$ be the whole open ray. In other words, $\cJ(\bx,
\alpha)$ is the open part of the segment of the ray connecting
the origin $\bx$ of the ray to the closest point of the lattice
$\frac{1}{n}\cdot (\Z^n)^*$ on the ray.

\begin{cor}
\label{thm-cpn-toric-fiber-upp-bound} With the setup as above,
\begin{equation}
\label{eqn-cpn-toric-fiber-upp-bound}
\df_\bx (\alpha) \leq d_\bx (\alpha).
\end{equation}
\Qed
\end{cor}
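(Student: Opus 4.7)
The plan is to deduce the corollary directly from Theorem~\ref{thm-tori-in-CPn} by taking $C = d_\bx(\alpha)$. If $d_\bx(\alpha) = +\infty$ the asserted inequality is vacuous, so from now on I assume $d_\bx(\alpha) < +\infty$. The task then reduces to a purely cohomological verification: that $[\omega]_{d_\bx(\alpha)} := [\omega]_{L_\bx} - d_\bx(\alpha)\,\partial\alpha$ lies in $H^2(\C P^n, L_\bx; \tfrac{1}{n}\Z)$.

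To check this, I would work with the natural generators of $H_2(\C P^n, L_\bx; \Z)$ modulo torsion coming from the toric structure. The moment polytope $\Delta$ has $n+1$ facets, and to each of them is associated a standard Maslov-two holomorphic disk $D_1, \dots, D_{n+1}$ with boundary on $L_\bx$, characterized by $\partial D_i = e_i \in H_1(L_\bx) = \Z^n$ and $\omega(D_i) = x_i$ for $i = 1,\dots,n$, while $\partial D_{n+1} = -(e_1 + \dots + e_n)$ and $\omega(D_{n+1}) = 1 - (x_1 + \dots + x_n)$. From the long exact sequence together with $H^1(\C P^n) = 0$ and $\omega|_{L_\bx} = 0$, these classes form a basis of the free part of $H_2(\C P^n, L_\bx; \Z)$, so a class in $H^2(\C P^n, L_\bx; \R)$ belongs to $H^2(\C P^n, L_\bx; \tfrac{1}{n}\Z)$ exactly when its pairing with every $D_i$ lies in $\tfrac{1}{n}\Z$.

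Writing $\alpha = \sum_i \alpha_i e_i^*$ and evaluating, one gets
\[
([\omega]_L - t\partial\alpha)(D_i) = x_i - t\alpha_i,\quad i \leq n,
\]
and $([\omega]_L - t\partial\alpha)(D_{n+1}) = 1 - \sum_{j=1}^n(x_j - t\alpha_j)$. The integrality of the $D_{n+1}$-value is a consequence of integrality of the $D_i$-values (plus the fact that $1 \in \tfrac{1}{n}\Z$), so the hypothesis of Theorem~\ref{thm-tori-in-CPn} at a given $t$ is equivalent to $\bx - t\alpha \in \tfrac{1}{n}(\Z^n)^*$. By the very definition of $d_\bx(\alpha)$ this condition is met at $t = d_\bx(\alpha)$, and Theorem~\ref{thm-tori-in-CPn} then gives $\df_\bx(\alpha) \leq d_\bx(\alpha)$, as desired. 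No serious obstacle arises: the corollary is a direct translation of Theorem~\ref{thm-tori-in-CPn} into the combinatorial language of the moment polytope, with the only substantive step being the standard identification of the Maslov-two disk basis and its pairing data.
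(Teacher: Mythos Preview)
Your proof is correct and follows exactly the approach the paper intends: the paper marks the corollary with a bare \Qed, treating it as an immediate consequence of Theorem~\ref{thm-tori-in-CPn} together with the definition of $d_\bx(\alpha)$. You have simply supplied the routine verification---via the standard Maslov-two disk basis $D_1,\ldots,D_{n+1}$---that the cohomological hypothesis $[\omega]_C \in H^2(\C P^n,L_\bx;\tfrac{1}{n}\Z)$ of Theorem~\ref{thm-tori-in-CPn} is equivalent to $\bx - C\alpha \in \tfrac{1}{n}(\Z^n)^*$, which the paper leaves implicit.
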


In case $l_\bx (\alpha) = d_\bx (\alpha)$ the lower and the upper
bounds \eqref{eqn-toric-fiber-low-bound},
\eqref{eqn-cpn-toric-fiber-upp-bound} yield $\df_\bx (\alpha) =
d_\bx (\alpha) = l_\bx (\alpha) = \bp_\bx (\alpha)$. For instance,
this happens when the intervals $\cI (\bx,\alpha)$ and $\cJ
(\bx,\alpha)$ coincide. Thus, we get the following corollary.

\begin{cor}
\label{cor-cpn-toric-fiber-upper-and-lower-bounds-coincide}

Assume that $\bx=\kappa \alpha$ for some $\kappa>0$, $\alpha \in
H^1 (L_\bx)$, and that the open interval $\{ t\bx,\ 0 < t < \kappa \}$,
does not contain points of the lattice $\frac{1}{n}\cdot (\Z^n)^*$.

Then this interval coincides with $\cI (\bx,\alpha)$ and $\cJ
(\bx,\alpha)$ and therefore
\[
\df_\bx (\alpha) = \bp_\bx (\alpha) = l_\bx (\alpha) = d_\bx(\alpha)
= \kappa.
\]
\Qed
\end{cor}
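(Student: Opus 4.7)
The plan is to compute $l_\bx(\alpha)$ and $d_\bx(\alpha)$ explicitly under the hypothesis, show that both equal $\kappa$, and then assemble the desired equalities from the chain of inequalities provided by Theorem~\ref{thm-toric-case-low-bound}, Theorem~\ref{thm-bpL-leq-dfL-introduction}, and Corollary~\ref{thm-cpn-toric-fiber-upp-bound}.

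First, I would compute $l_\bx(\alpha)$. Since $\bx=\kappa\alpha$ with $\kappa>0$ and $\bx\in\Int\Delta$, the vector $\alpha$ has strictly positive coordinates. Parametrize the ray as $t\mapsto \bx-t\alpha=(\kappa-t)\alpha=\frac{\kappa-t}{\kappa}\bx$: for $t\in[0,\kappa]$ this is a non-negative scalar multiple of $\bx$ and therefore lies in $\Delta$ by convexity (since both $0$ and $\bx$ belong to $\Delta$), while for $t>\kappa$ at least one coordinate becomes negative, so the ray leaves $\Delta$. Hence $l_\bx(\alpha)=\kappa$, and $\cI(\bx,\alpha)$ is exactly the open segment from $\bx$ to the origin, which coincides as a set with the open interval singled out in the hypothesis.

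Next, I would compute $d_\bx(\alpha)$. At $t=\kappa$ we have $\bx-\kappa\alpha=0\in\frac{1}{n}\cdot(\Z^n)^*$, so $d_\bx(\alpha)\leq\kappa$. The hypothesis that the open interior of the segment from $\bx$ to $0$ contains no point of $\frac{1}{n}\cdot(\Z^n)^*$ translates to $\bx-t\alpha\notin\frac{1}{n}\cdot(\Z^n)^*$ for all $0<t<\kappa$, which gives $d_\bx(\alpha)\geq\kappa$ and hence equality. Consequently $\cJ(\bx,\alpha)$ coincides with the same open segment and with $\cI(\bx,\alpha)$.

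Finally, I would string together the bounds
\[
\kappa \;=\; l_\bx(\alpha) \;\leq\; \bp_\bx(\alpha) \;\leq\; \df_\bx(\alpha) \;\leq\; d_\bx(\alpha) \;=\; \kappa,
\]
where the first inequality is Theorem~\ref{thm-toric-case-low-bound}, the middle one is Theorem~\ref{thm-bpL-leq-dfL-introduction}, and the last is Corollary~\ref{thm-cpn-toric-fiber-upp-bound}. All four quantities then coincide and equal $\kappa$. There is no substantive obstacle here: the content is a purely convex-geometric identification of both $\cI(\bx,\alpha)$ and $\cJ(\bx,\alpha)$ with the open segment from $\bx$ to the origin, after which the already-established upper and lower bounds pinch the four invariants to a single value.
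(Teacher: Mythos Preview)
Your proof is correct and follows exactly the approach the paper intends: the corollary is marked with an immediate \Qed because the preceding paragraph already observes that once $\cI(\bx,\alpha)=\cJ(\bx,\alpha)$ the chain $l_\bx(\alpha)\leq\bp_\bx(\alpha)\leq\df_\bx(\alpha)\leq d_\bx(\alpha)$ collapses. You have simply made explicit the verification that both intervals coincide with the open segment from $\bx$ to the origin and that this forces $l_\bx(\alpha)=d_\bx(\alpha)=\kappa$.
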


\subsection{Lagrangian tori in $\SP^2\times \SP^2$}
\label{subsec-Lagr-tori-in-S2timesS2}

Let $(\SP^2,\sigma)$ be the standard symplectic sphere with normalized
symplectic area $\int_{\SP^2} \sigma = 1$.

\begin{thm}\label{thm-upp-bound-bpL-S2timesS2}
Let $L \subset (\SP^2 \times \SP^2, \sigma \oplus \sigma)$ be a Lagrangian
torus and $\alpha \in H^1(L)$. Consider $[\omega]_t := [\omega]_L -
t \partial\alpha \in H^2(\SP^2 \times \SP^2, L; \R)$ for $t \geq 0$.
If there exists a $C > 0$ such that $[\omega]_C \in H^2(\SP^2 \times \SP^2,
L; \Z)$, then
\[
\df_L(\alpha) \leq C.
\]
\end{thm}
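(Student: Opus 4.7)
The approach I would take mirrors the proof of Theorem~\ref{thm-tori-in-CPn}: argue by contradiction, substituting the rigidity results of Dimitroglou Rizell--Goodman--Ivrii \cite{DRizell-Goodman-Ivrii} for the Cieliebak--Mohnke input used there. First I would assume, for the sake of contradiction, that $\df_L(\alpha) > C$ and produce a Lagrangian isotopy $\psi = \{\psi_t\}_{0\le t\le T}$ with $T > C$, $\psi_0 = \iota$, and $\Flux(\psi)_t = -t\alpha$ for all $t\in[0,T]$. Setting $L' := \psi_C(L)$, the isotopy $\psi$ yields the canonical isomorphism $H^2(\SP^2\times\SP^2, L;\R) \cong H^2(\SP^2\times\SP^2, L';\R)$ under which $[\omega]^{\psi}_C = [\omega]_L - C\partial\alpha$ corresponds to the relative area class $[\omega]_{L'}$. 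Since $[\omega]^{\psi}_C$ is integer-valued on $H_2(\SP^2\times\SP^2, L;\Z)$ by hypothesis, $[\omega]_{L'}$ is integer-valued on $H_2(\SP^2\times\SP^2, L';\Z)$.

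The key geometric ingredient is then the $\SP^2\times\SP^2$ analogue of the Cieliebak--Mohnke Maslov--$2$ disk area estimate, which I would extract from \cite{DRizell-Goodman-Ivrii}: for any $\omega$-compatible almost complex structure $J$ on $\SP^2\times\SP^2$ and any point $p \in L'$, there exists a non-constant $J$-holomorphic disk $u : (D^2,\partial D^2) \to (\SP^2\times\SP^2, L')$ of Maslov index $2$ whose boundary passes through $p$ and whose symplectic area satisfies $\omega([u]) \le 1/2$. This is the ``half the area of a sphere factor'' uniruledness statement, playing the role in $\SP^2\times\SP^2$ that the $\tfrac{1}{n+1}$ bound plays for Lagrangian tori in $\C P^n$.

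To close the argument, I would observe that $[u] \in H_2(\SP^2\times\SP^2, L')$, so $\omega([u]) \in \Z$ by the integrality established above; since $u$ is non-constant and $J$-holomorphic its symplectic area is strictly positive, forcing $\omega([u]) \ge 1$. This contradicts $\omega([u]) \le 1/2$, and the contradiction yields $\df_L(\alpha) \le C$.

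The principal obstacle I expect is the clean extraction of the Maslov--$2$ disk area bound $\omega([u]) \le 1/2$ for \emph{every} (not just monotone) Lagrangian torus in $\SP^2\times\SP^2$ from \cite{DRizell-Goodman-Ivrii} — one must ensure both existence of the disk through an arbitrary point of $L'$ for an arbitrary compatible $J$, and the sharp area upper bound. Once that input is available, the remainder of the argument reduces to the formal cohomological bookkeeping identical to the proof of Theorem~\ref{thm-tori-in-CPn}.
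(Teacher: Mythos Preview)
Your proposal is correct and follows essentially the same route as the paper's proof. The only difference is in the precise form of the input extracted from \cite{DRizell-Goodman-Ivrii}: the paper cites their Proposition~5.3 directly as the statement that for \emph{any} Lagrangian torus $L'\subset \SP^2\times\SP^2$ the positive generator of the area group $\omega(H_2(\SP^2\times\SP^2,L'))$ is at most $1/2$, which sidesteps your concern about extracting a through-every-point Maslov-$2$ disk bound and makes the contradiction with integrality immediate.
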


For the proof see Section~\ref{sec-pf-thm-S2-times-S2-upp-bound}.

We provide a precise statement in the case of Lagrangian
torus fibers. Consider the standard Hamiltonian $\T^2$-action
on $\SP^2 \times \SP^2$ and denote its moment map by
$\Phi : \SP^2 \times \SP^2
\to (\R^2)^*$. Its image is given by
$\Delta := [0,1] \times [0,1]$.
As in Section~\ref{subsec-toric-fibers-in-sympl-toric-mfds-intro},
for $\bx\in\Int \Delta$ denote $L_\bx :=\Phi^{-1} (\bx)$ the
corresponding Lagrangian torus fiber of $\Phi$ and set
\[
\df_\bx (\alpha) := \df_{L_\bx} (\alpha),\ \ \bp_\bx (\alpha)
:= \bp_{L_\bx} (\alpha)
\]
for each $\alpha \in (\Z^2)^* \cong H^1(L_\bx)$.

By Theorem~\ref{thm-toric-case-low-bound}, for all $\alpha \in
(\Z^2)^*$
\[
l_\bx (\alpha) \leq \bp_\bx (\alpha).
\]

For certain $\alpha\in (\Z^2)^*$ and $\bx\in \Int \Delta$ we
obtain an upper bound on $\df_\bx (\alpha)$ from
Theorem~\ref{thm-upp-bound-bpL-S2timesS2}.

Namely, define $\rho_\bx (\alpha)$ as the smallest $t > 0$ for
which $\bx - t\alpha\in (\Z^2)^*$ and let
$\cK(\bx,\alpha)$ be the open segment of the same open ray
$\bx-t\alpha$, $t\in (0,+\infty)$, as above connecting $\bx$
and $\bx - \rho_\bx (\alpha) \alpha$:
\[
\cK (\bx,\alpha) := \{ \bx - t \alpha,\ 0<t< \rho_\bx (\alpha)\}.
\]
If no such $t$ exists, set $\rho_\bx (\alpha):= +\infty$ and let $\cK
(\bx,\alpha)$ be the whole open ray. In other words, $\cK(\bx,
\alpha)$ is the open part of the segment of the ray connecting
the origin $\bx$ of the ray to the closest point of the lattice
$(\Z^2)^*$ on the ray.

\begin{cor}
\label{cor-S2xS2-toric-fiber-upp-bound} With the setup as above,
\[
l_\bx (\alpha) \leq \bp_\bx (\alpha) \leq \df_\bx (\alpha) \leq
\rho_\bx (\alpha).
\]
In particular, if $\cK (\bx,\alpha)$ connects $\bx$ to either of the
four vertices of $\Delta$, then $\cI (\bx,\alpha) = \cK (\bx,\alpha)$
then
\[
l_\bx (\alpha) = \bp_\bx (\alpha) = \df_\bx (\alpha) = \rho_\bx (\alpha).
\]
\Qed
\end{cor}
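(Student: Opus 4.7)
The inequalities $l_\bx(\alpha) \le \bp_\bx(\alpha) \le \df_\bx(\alpha)$ are immediate: the first is Theorem~\ref{thm-toric-case-low-bound} applied in this toric setting, and the second is Theorem~\ref{thm-bpL-leq-dfL-introduction}. The only content of the corollary is therefore the upper bound $\df_\bx(\alpha) \le \rho_\bx(\alpha)$, which I plan to deduce from Theorem~\ref{thm-upp-bound-bpL-S2timesS2} applied with $C := \rho_\bx(\alpha)$.

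To invoke that theorem I must verify $[\omega]_{\rho_\bx(\alpha)} = [\omega]_{L_\bx} - \rho_\bx(\alpha)\,\partial\alpha \in H^2(\SP^2\times\SP^2, L_\bx; \Z)$. The key step is to translate this integrality condition into the lattice condition $\bx - \rho_\bx(\alpha)\alpha \in (\Z^2)^*$ built into the definition of $\rho_\bx$. For this I would use the standard basis of $H_2(\SP^2\times\SP^2, L_\bx; \Z)$ coming from the four Maslov-$2$ toric disks $D_1,D_2,D_3,D_4$ associated with the four facets of $\Delta=[0,1]^2$, together with the two absolute sphere classes $[\SP^2\times\{\mathrm{pt}\}]$ and $[\{\mathrm{pt}\}\times\SP^2]$. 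The boundaries $\partial D_i$ form (up to sign) the basis of $H_1(L_\bx;\Z)$ dual to $e_1,e_2$, and the symplectic areas of the $D_i$ are $x_1,1-x_1,x_2,1-x_2$. Since $\partial\alpha$ vanishes on the image of $H_2(M)\to H_2(M,L_\bx)$ and the two sphere classes have $[\omega]$-area $1$, integrality of $[\omega]_t$ on the generators reduces to the two conditions $x_i - t\,\alpha(e_i)\in\Z$, $i=1,2$, which is exactly $\bx - t\alpha\in(\Z^2)^*$. By the very definition of $\rho_\bx(\alpha)$ this holds at $t=\rho_\bx(\alpha)$, so Theorem~\ref{thm-upp-bound-bpL-S2timesS2} yields $\df_\bx(\alpha)\le\rho_\bx(\alpha)$.

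For the ``in particular'' statement I would observe that the only lattice points of $(\Z^2)^*$ contained in the closed polytope $\Delta=[0,1]^2$ are its four vertices, and $\bx\in\Int\Delta$ is not among them. Hence if $\cK(\bx,\alpha)$ connects $\bx$ to a vertex $v$ of $\Delta$, then (i) no point of $(\Z^2)^*$ lies strictly between $\bx$ and $v$, and (ii) by convexity of $\Delta$ the ray $\bx-t\alpha$ stays in $\Int\Delta$ for $0<t<\rho_\bx(\alpha)$ and leaves $\Delta$ at $v$. Thus $l_\bx(\alpha)=\rho_\bx(\alpha)$, i.e.\ $\cI(\bx,\alpha)=\cK(\bx,\alpha)$, and the chain of inequalities collapses to equalities.

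The only step that is not a direct citation or an elementary observation about the unit square is the identification of $[\omega]$-integrality over $H_2(\SP^2\times\SP^2,L_\bx;\Z)$ with the lattice condition on $\bx-t\alpha$; I anticipate this bookkeeping with the toric disks and sphere classes to be the single technical point of the proof.
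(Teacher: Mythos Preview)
Your proposal is correct and follows exactly the route the paper intends: the paper marks the corollary with a bare \Qed, treating it as immediate from Theorem~\ref{thm-toric-case-low-bound}, Theorem~\ref{thm-bpL-leq-dfL-introduction}, and Theorem~\ref{thm-upp-bound-bpL-S2timesS2} together with the definition of $\rho_\bx$. You have simply written out the bookkeeping that the paper leaves implicit; one minor quibble is that the four toric disks already generate $H_2(\SP^2\times\SP^2,L_\bx;\Z)$ (the sphere classes are sums $D_1+D_2$ and $D_3+D_4$), so your list is a generating set rather than a basis, but this does not affect the argument.
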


\subsection{Discussion and open questions}
\label{subsec-discussion}

The results above reflect first steps in the study of the invariants $\bp_L$
and $\df_L$. In this section we discuss the main difficulty in the current approach
and a possible direction of further investigation of these invariants.

As we have already mentioned in the introduction, the lower bounds on
$\bp_L$ come from ``soft" constructions, while the upper bounds on $\df_L$
are based on ``rigid" symplectic methods -- foremost, on strong results
yielding the existence of pseudo-holomorphic disks with boundary on
Lagrangian submanifolds appearing in appropriate Lagrangian isotopies
of $L$. Unfortunately, it seems that these strong rigidity results are not
strong enough to get upper bounds on $\df_L (\alpha)$ for many $\alpha$
even for the basic examples of Lagrangian tori considered above. The
(well-known) difficulty comes from the fact that
the pseudo-holomorphic disks in a given relative homology class of $L$ in
$M$ may not persist in a Lagrangian isotopy $\psi=\{ \psi_t: L\to M\}$ of $L$
(since bubbling-off of pseudo-holomorphic disks is a codimension-1 phenomenon),
which makes it very difficult to track, as $t$ changes, the relative homology classes
of $\psi_t (L)$ carrying the disks and, accordingly, the symplectic areas of these
disks. (As above, we use the Lagrangian isotopy to identify the relative homology
groups of all $\psi_t (L)$).
In our case the difficulty is compounded by the need to track the pseudo-holomorphic
disks and their areas for an {\it arbitrary} Lagrangian isotopy $\psi$ of $L$ satisfying
the cohomological condition \eqref{eqn-flux-path-linear}, with no
{\it a priori} geometric information about it. Such a Lagrangian isotopy $\psi$ typically
involves non-monotone Lagrangians which limits
even more the control over the disks and their areas.

Thus, the progress on upper bounds for $\df_L$ depends on getting more precise
information on pseudo-holomorphic/symplectic disks with boundary on
(possibly non-monotone) Lagrangian submanifolds Lagrangian isotopic to $L$.

Here is an example of possible additional helpful information on the disks.
We will present it in the case of Lagrangian tori in the standard symplectic $\C^2$.

Assume there is a way to associate to any non-monotone Lagrangian torus $L\subset \C^2$
an ordered integral basis $B_L$ of $H_2 (\C^2,L)$ with the following properties:

\medskip
\noindent
(a) For any Lagrangian isotopy $L_t$ of $L$ {\it among non-monotone Lagrangian tori}
the following conditions hold:

\begin{itemize}

 \item{} the bases $B_{L_t}$ for different $t$ are all identified with each other under
 the isomorphisms between the groups $H_2 (\C^2,L_t)$ defined by the Lagrangian isotopy;

\item{} the symplectic areas of the elements of $B_{L_t}$ are positive and change continuously with $t$.

\end{itemize}

\medskip
\noindent
(b) For $L=T^2(x_1,x_2)$, $x_1<x_2$, the basis $B_L$ is the standard basis of $H_2 (\C^2,T^2(x_1,x_2))$.

\medskip
If such bases exist, a rather straightforward argument would allow to strengthen
Corollary~\ref{cor-def-bp-for-split-tori} and show that if $0 < x_1 \leq x_2$ and
$n x_1 - m x_2 \leq 0$, $m>0$, then
\[
x_1/m = \bp_\bx (m,n) = \df_\bx (m,n).
\]
This, in turn, would allow to strengthen Corollary~\ref{cor-def-bp-for-split-monotone-tori-in-C2}
and show that for a monotone split Lagrangian torus $T^2 (x,x)$ one has
\[
\bp_\bx (m,n) = \df_\bx (m,n) = \frac{x}{\max \{ 0,m,n\}}
\]
for all $m,n\in\Z$.

The question about the existence of a basis $B_L$ is motivated by the folklore
conjecture that any non-monotone torus $L$ in $\C^2$ is Hamiltonian isotopic to
a split torus $T^2(x_1,x_2)$.

Indeed, assume the conjecture is true. Then, by a theorem of Y.Chekanov
\cite{Chekanov-tori-in-R2n}, the ordered pair $(x_1,x_2)$ is uniquely determined by $L$,
as long as we require $x_1<x_2$. A Hamiltonian isotopy between $L$ and $T^2(x_1,x_2)$
identifies the standard basis of $H_2 (\C^2,T(x_1,x_2))$ with an integral basis $B_L$
of $H_2 (\C^2,L)$. The basis $B_L$ does not depend on the choice of the Hamiltonian
isotopy (since any Hamiltonian isotopy of $\C^2$ preserving $T^2 (x_1,x_2)$ as a set
acts trivially on the homology of $T^2 (x_1,x_2)$, by a theorem of M.-L.Yau \cite{YauML}).
It is not hard to check that $B_L$ satisfies (a) and (b).

The existence of bases $B_L$ satisfying (a) and (b) is, of course, much weaker than
the conjecture and, accordingly, might be easier to prove and to generalize to higher
dimensions.

\section{The invariant $\bp_L$ and its properties}
\label{sec-pb-L-intro}

The definition of $\bp_L$ is based on the following construction related to
the Poisson bracket.

\subsection{Poisson bracket invariants}
\label{subsec-poisson-bracket-invariants-definition}

Let $(M^{2n},\omega)$ be a connected symplectic manifold,
possibly with boundary. Let $C^\infty (M)$ and $C^\infty_c (M)$ denote,
respectively, the spaces of all and of compactly supported smooth functions
on $M$ (in the latter case the support is allowed to intersect the boundary of $M$).

Our
sign convention for the Poisson bracket on $(M,\omega)$ will be as
follows. For $G\in C^\infty (M)$ define a vector field $\sgrad G$ by
$i_{\sgrad G} \omega = -dG$. Given $F,G\in C^\infty (M)$, define the
Poisson bracket $\{F,G\}$ by $$\{F,G\} := \omega(\sgrad G,\sgrad F) =
dF (\sgrad G) = - dG (\sgrad F) =$$
$$= L_{\sgrad G} F = - L_{\sgrad F} G.$$

We say that sets $X_0$, $X_1$, $Y_0$, $Y_1\subset M$ form an {\it
admissible quadruple}, if they are compact and $X_0\cap X_1 = Y_0\cap
Y_1 = \emptyset$.

Assume $X_0, X_1, Y_0, Y_1 \subset M$ is an admissible quadruple.
Recall from \cite{EP-tetragons} (cf. \cite{BEP}) the following
definition:
\[
pb_4^{+} (X_0,X_1,Y_0,Y_1) := \inf_{(F,G)\in\cF} \max_M\ \{F,G\},
\]
where $\cF = \cF (X_0,X_1,Y_0,Y_1)$ is the set of all pairs $(F,G)$,
$F,G \in C^\infty_c (M)$, such that
\begin{equation}
\label{eqn-F-G-X-Y-ineqs-defn-pb4} F|_{X_0}\leq 0, \ F|_{X_1} \geq
1,\ G|_{Y_0}\leq 0,\ G|_{Y_1}\geq 1.
\end{equation}
One can show (see \cite{EP-tetragons}, \cite{BEP}) that $\cF$ can be
replaced in the definition of $pb_4^{+}$ by a smaller set $\cF' =
\cF' (X_0,X_1,Y_0,Y_1)$ of pairs $(F,G)$, $F,G\in C^\infty_c (M)$,
for which the inequalities in \eqref{eqn-F-G-X-Y-ineqs-defn-pb4} are
replaced by equalities on some open neighborhoods of the sets $X_0,
X_1, Y_0, Y_1$.

\bigskip
\noindent
{\bf If it is clear from the context what $X_0,X_1,Y_0,Y_1$ are meant, we
will omit the corresponding indices and sets in the notation for
$pb^+_4$, $\cF$, $\cF'$.}

The number $1/pb_4^{+} (X_0,X_1,Y_0,Y_1)$ has the following dynamical
interpretation \cite{EP-tetragons} (cf. \cite{BEP}):

\smallskip
\noindent
Consider the set $\goS$ of complete Hamiltonians $G:M\to\R$
such that
$$\Delta_G:= \min_{Y_1}\ G - \max_{Y_0}\ G >0.$$
For each such $G$ define $T_G\in (0,+\infty]$ as the supremum of all
$t>0$ such that there is no trajectory of the Hamiltonian flow of $G$
of time-length
$\displaystyle \leq t/\Delta_G$ from $X_0$ to $X_1$ (such
a trajectory is called a {\it chord of $G$}).
We recall:
\begin{thm}[Theorem 1.11 in \cite{EP-tetragons}]
\label{thm-pb4-dynamical-meaning}
\[
\sup\limits_{G\in \goS} T_G = 1/pb_4^{+} (X_0,X_1,Y_0,Y_1).
\]
\end{thm}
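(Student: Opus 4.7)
The plan is to prove the two inequalities $\sup_{G\in\goS}T_G\geq 1/pb_4^{+}$ and $\sup_{G\in\goS}T_G\leq 1/pb_4^{+}$ separately, the first by a direct computation along chords and the second by the construction of an auxiliary Hamiltonian.

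For the lower bound, given any $(F,G)\in\cF'$ with $\max_M\{F,G\}$ close to $pb_4^{+}$, the Hamiltonian $G$ is compactly supported hence complete, and the normalisation $G\equiv 0$ near $Y_0$, $G\equiv 1$ near $Y_1$ places $G\in\goS$ with $\Delta_G=1$. For any chord $\gamma:[0,T]\to M$ of $G$ from $X_0$ to $X_1$, the identity $\tfrac{d}{dt}F(\gamma(t))=\{F,G\}(\gamma(t))$ together with $F\equiv 0$ near $X_0$ and $F\equiv 1$ near $X_1$ yields
\[
1 = F(\gamma(T))-F(\gamma(0)) = \int_0^T \{F,G\}(\gamma(t))\,dt \leq T\cdot\max_M\{F,G\}.
\]
Hence every chord has time-length at least $1/\max_M\{F,G\}$, giving $T_G\geq 1/\max_M\{F,G\}$; passing to the infimum over $(F,G)\in\cF'$ yields the desired bound.

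For the reverse inequality, fix $G\in\goS$ and, by replacing $G$ with $(G-\max_{Y_0}G)/\Delta_G$ (which rescales the flow by $1/\Delta_G$), arrange $G\leq 0$ on $Y_0$ and $G\geq 1$ on $Y_1$. The task becomes: given $T$ strictly less than the infimum chord time-length, produce $F\in C_c^\infty(M)$ with $(F,G)\in\cF$ and $\max_M\{F,G\}\leq (1+\epsilon)/T$, so that $pb_4^{+}\leq (1+\epsilon)/T$. The no-chord hypothesis ensures that the compact set $V:=\bigcup_{s\in[0,T]}\phi_s^G(X_0)$ is disjoint from $X_1$. The prototype for $F$ is the truncated backward entry-time
\[
F_0(x):=\min\!\Bigl(1,\,\tau(x)/T\Bigr),\qquad \tau(x):=\inf\{s\geq 0:\phi_{-s}^G(x)\in X_0\},
\]
which satisfies $F_0|_{X_0}=0$, $F_0|_{X_1}=1$ (by $\tau|_{X_1}>T$), and $L_{\sgrad G}F_0\leq 1/T$ along orbits of $G$.

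The main obstacle will be turning $F_0$ into an admissible smooth compactly supported $F$ without degrading the pointwise bound on $\{F,G\}$: $\tau$ is only lower semicontinuous in general, and possible flow recurrence can in principle re-introduce unwanted contributions outside $V$. My planned remedy is a flow-averaging construction: choose a smooth hypersurface $N\subset V\setminus(X_0\cup X_1)$ transverse to $\sgrad G$ that separates $X_0$ from $X_1$ within $V$, pick a smooth nonnegative $\rho\in C_c^\infty(M)$ supported in a thin slab $\phi_{(-\delta,\delta)}^G(N)$ with $\sup_M\rho\leq(1+\epsilon)/T$ and $\int_{-\delta}^{\delta}\rho(\phi_s^G(y))\,ds\geq 1$ for every $y\in N$, and set $F(x):=\int_{0}^{\infty}\rho(\phi_{-s}^G(x))\,ds$, truncated by a suitable cutoff to be compactly supported. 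Then $L_{\sgrad G}F=\rho\leq(1+\epsilon)/T$ by a direct derivative computation, the backward orbit of every $x\in X_1$ must traverse the slab by the no-chord hypothesis (forcing $F|_{X_1}\geq 1$), and placing $N$ just past the forward ``front'' of $X_0$ arranges $F|_{X_0}=0$. Verifying that the slab and cutoff can be chosen simultaneously to satisfy all these requirements in the presence of possible recurrence is the technical heart of the argument; once accomplished, letting $T\nearrow T_G$ and $\epsilon\searrow 0$ yields $pb_4^{+}\leq 1/T_G$, completing the proof.
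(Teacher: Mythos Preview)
The paper does not prove this theorem; it is quoted from \cite{EP-tetragons} as background. So there is no ``paper's own proof'' to compare against, and your proposal stands on its own merits.

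Your lower bound (\,$\sup T_G\geq 1/pb_4^{+}$\,) is correct: for $(F,G)\in\cF'$ the function $G$ lies in $\goS$ with $\Delta_G=1$, and integrating $\{F,G\}$ along any chord gives the time-length bound.

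The reverse inequality, however, has a genuine gap. Your slab construction defines $F(x)=\int_0^\infty\rho(\phi_{-s}^G(x))\,ds$ with $\rho$ supported near a hypersurface $N\subset V=\bigcup_{s\in[0,T]}\phi_s^G(X_0)$, and you assert that ``the backward orbit of every $x\in X_1$ must traverse the slab by the no-chord hypothesis''. This is false. The no-chord hypothesis says only that \emph{forward} orbits from $X_0$ of time $\leq T$ avoid $X_1$; it gives no information whatsoever about backward orbits starting in $X_1$. A point $x\in X_1$ may have a backward orbit that never enters $V$ (hence never meets the slab), in which case your $F(x)=0$, not $\geq 1$. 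The phrase ``$N$ separates $X_0$ from $X_1$ within $V$'' is also incoherent, since $X_1\cap V=\emptyset$.

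The standard remedy is to build $F$ the other way around: set $F\equiv 1$ \emph{outside} a small thickening of the flow tube $V$ (so $F|_{X_1}=1$ is automatic from $X_1\cap V=\emptyset$), and inside the tube let $F$ interpolate from $0$ near $X_0$ to $1$ along the flow of $G$, e.g.\ by smoothing $u(\sigma(x))$ where $\sigma(x)=\inf\{s\geq 0:\phi_{-s}^G(x)\in X_0\}$ and $u:[0,T]\to[0,1]$ has $u'\leq(1+\epsilon)/T$. Then $\{F,G\}=0$ off the tube and $\leq(1+\epsilon)/T$ inside it. The compact-support requirement is handled by a further cutoff supported where $G$ (and hence $\sgrad G$) is already controlled; this is where the genuine technical work lies, and your proposal does not address it either.
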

Thus, if there exists
a complete Hamiltonian $G:M\to\R$ with $\min_{Y_1} G - \max_{Y_0} G >0$
that has no chords from $X_0$ to $X_1$ then $1/pb_4^{+} (X_0,X_1,Y_0,Y_1)=+\infty$.

\subsection{The invariant $\bp_L$ for general Lagrangian submanifolds}
\label{subsec-defn-of-pbL-in-gen-case}

The $pb_4^+$-invariant of admissible quadruples can be used to define
a symplectic invariant of Lagrangian submanifolds fibered over the
circle in the following way.

Let $L \subset (M,\omega)$ be a closed, connected Lagrangian submanifold
admitting a fibration over $\SP^1$. Let $\mathcal{I}'(L)$ denote the
set of smooth fibrations $L \to \SP^1$. The right action of $\Diff_0(L)$
defines an equivalence relation on $\cI'(L)$ and we denote the resulting
quotient set by $\cI(L)$. Now slice $\SP^1$ into 4 consecutive closed arcs
$\gamma_1$, $\gamma_2$, $\gamma_3$, $\gamma_4$ {\it in the
counterclockwise order}. For a chosen smooth fibration $f: L \to \SP^1$
representing a class $[f] \in \cI(L)$ we define
\[
X_0: = f^{-1} (\gamma_1),\;\; X_1: = f^{-1} (\gamma_3),\;\;
Y_0: = f^{-1} (\gamma_4),\;\; Y_1: = f^{-1} (\gamma_2).
\]
Roughly speaking, we slice $L$ into four parts along cuts parallel to a
fiber of $f$ -- see Figure~\ref{figure-torus}.

\begin{figure}[h]
    \centering
    \includegraphics[scale=0.64]{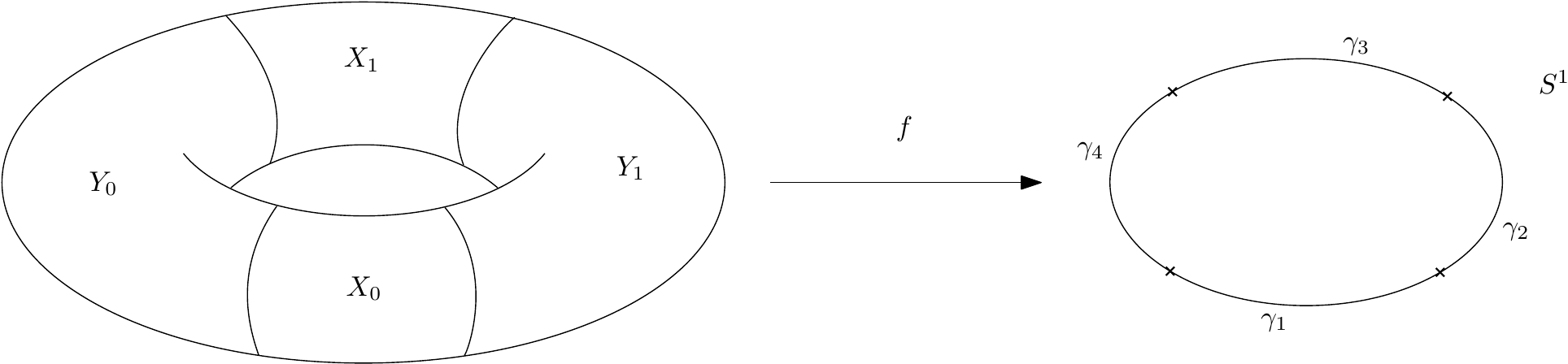}
    \caption{The four sets $X_0, X_1, Y_0, Y_1$ arising from a fibration $f$}
    \label{figure-torus}
\end{figure}

One easily sees that $X_0,X_1,Y_0,Y_1 \subset M$ is an admissible quadruple
and $X_0 \cup X_1 \cup Y_0 \cup Y_1 = L$. We call such a quadruple {\it an
admissible quadruple associated to $f$}. We set
\[
\bp_L ([f]) := 1/pb^+_4 (X_0,X_1,Y_0,Y_1).
\]
If $pb^+_4 (X_0,X_1,Y_0,Y_1)=0$, we set $\bp_L ([f]):=+\infty$.
Thus, $\bp_L ([f])$ takes values in $(0,+\infty]$.

Equivalently, $\bp_L ([f])$ can be described as the infimum of all $T>0$ such
that for {\it any} complete Hamiltonian $G: M\to\R$ satisfying $G|_{Y_0} \leq 0$,
$G|_{Y_1}\geq 1$, there exists a chord of $G$ from $X_0$ to $X_1$ of time-length
$\leq T$. If no such $T$ exists, we set $\bp_L ([f]) := +\infty$. In applications
we will often prove a lower bound $T \leq \bp_L ([f])$ by constructing
for any $\varepsilon > 0$ a complete Hamiltonian $G$ that satifies $G|_{Y_0} \leq 0$,
$G|_{Y_1}\geq 1$ and has no chords from $X_0$ to $X_1$ of time-length
$< T - \varepsilon$.

\begin{rem}
{\rm The letters in the notation $\bp$ stand for the ``Poisson bracket" and their
inverse order ({\it ``b"} before {\it ``p"}) reflects the fact that in the
definition of $\bp$ we take the inverse of the maximum of the Poisson bracket.}
\end{rem}

\begin{prop}
\label{prop-defn-of-pb+L-is-correct}
$\bp_L ([f])$ is well-defined -- i.e. it does not depend on the choice of
representative of $[f] \in \cI(L)$ and the choice of division of $\SP^1$ into
4 arcs.
\end{prop}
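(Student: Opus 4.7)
The plan is to reduce both independence statements to a single assertion about the behavior of $pb_4^+$ under the action of $\Diff_0(L)$ on the admissible quadruple, and then to prove that assertion by realizing any $\phi\in\Diff_0(L)$ as the restriction to $L$ of an ambient Hamiltonian diffeomorphism of $M$.

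First I would show that both choices change the associated quadruple only by applying an element of $\Diff_0(L)$. If $\{\gamma_i\}$ and $\{\gamma_i'\}$ are two slicings of $\SP^1$ into four consecutive counterclockwise arcs, pick a diffeomorphism $\tau\in\Diff_0(\SP^1)$ with $\tau(\gamma_i)=\gamma_i'$ and an isotopy $\tau_t$ from $\id$ to $\tau$. Choosing any smooth distribution on $L$ transverse to the fibers of $f$ and horizontally lifting the generator of $\tau_t$, I obtain an isotopy $\psi_t$ in $\Diff(L)$ with $\tau_t\circ f=f\circ\psi_t$; then $\psi:=\psi_1\in\Diff_0(L)$ and $f^{-1}(\gamma_i')=\psi(f^{-1}(\gamma_i))$, so the quadruple for the primed slicing is the $\psi$-image of the original one. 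Similarly, replacing $f$ by $f\circ\phi$ with $\phi\in\Diff_0(L)$ replaces the quadruple by its image under $\phi^{-1}\in\Diff_0(L)$. Thus both parts of the proposition reduce to the following \emph{Key Claim}: for every $\phi\in\Diff_0(L)$,
\[
pb_4^+(\phi(X_0),\phi(X_1),\phi(Y_0),\phi(Y_1))=pb_4^+(X_0,X_1,Y_0,Y_1).
\]

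To prove the Key Claim, I would construct a symplectomorphism $\Psi$ of $M$ with $\Psi|_L=\phi$; the claim then follows because $pb_4^+$ is manifestly invariant under symplectomorphisms. Fix an isotopy $\phi_t$ in $\Diff(L)$ from $\id$ to $\phi$ and consider its cotangent lift $\tilde\phi_t: T^*L\to T^*L$, $(x,\xi)\mapsto(\phi_t(x),(\phi_t^{-1})^*\xi)$. Since cotangent lifts preserve the canonical Liouville form $\lambda$ on $T^*L$, the isotopy $\tilde\phi_t$ is Hamiltonian, generated by the function $H_t:=\lambda(X_t)$, where $X_t$ is the infinitesimal generator of $\tilde\phi_t$. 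The essential feature is that $H_t$ vanishes on the zero section, because $\lambda$ does. Using the Weinstein Lagrangian tubular neighborhood theorem, I identify a neighborhood $U$ of $L$ in $M$ symplectically with a neighborhood of the zero section in $T^*L$, transport $H_t$ to $U$, and extend it to a smooth compactly supported function $\tilde H_t$ on $M$ via a cutoff $\chi$ with $\chi\equiv 1$ near $L$, setting $\tilde H_t:=\chi H_t$. Because $H_t|_L\equiv 0$, the product rule gives $d\tilde H_t|_L=dH_t|_L$, so $\sgrad\tilde H_t|_L=X_t|_L$ is tangent to $L$; hence the flow of $\tilde H_t$ preserves $L$ and restricts on $L$ to $\phi_t$. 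Its time-$1$ map is the desired $\Psi$.

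The main obstacle I anticipate is precisely this extension step. Naively extending $\phi_t$ to an ambient smooth isotopy of $M$ would not produce a symplectomorphism, so the cotangent lift is essential; and passing from the cotangent lift, which lives only on $T^*L$, to a genuine Hamiltonian flow on $M$ requires the cutoff not to destroy the property that the flow stabilizes $L$ and restricts there to $\phi_t$. The vanishing of $H_t$ on the zero section makes the cutoff harmless exactly along $L$, which is why the argument goes through.
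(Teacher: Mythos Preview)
Your proof is correct and follows essentially the same approach as the paper: both extend $\phi\in\Diff_0(L)$ to an ambient Hamiltonian isotopy via a Weinstein neighborhood, using the Hamiltonian $H_t(q,p)=p(V_t(q))$ (which is exactly your $\lambda(X_t)$, the generator of the cotangent lift), and then invoke symplectic invariance of $pb_4^+$. The only cosmetic difference is in the arc-independence step: you construct the $\Diff_0(L)$ element directly by horizontally lifting an isotopy of $\SP^1$ through the fibration $f$, whereas the paper observes that $\varphi\circ f$ and $f$ have the same image under $\varrho'$ and then appeals to the later Proposition~\ref{prop-isom-fibrations-isotopies} to conclude $[\varphi\circ f]=[f]$; your route is slightly more self-contained. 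One small remark: the harmlessness of the cutoff along $L$ follows already from $\chi\equiv 1$ near $L$ (so $d\tilde H_t=dH_t$ there), and the vanishing of $H_t$ on the zero section, while true, is not actually needed for that step.
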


\begin{proof}
Let $f: L \to \SP^1$ be a smooth fibration. We first show that
for $\phi \in \Diff_0(L)$  the $\bp_L$-invariants of $f$ and
$f \circ \phi$ are equal.

Note that any $\phi\in \Diff_0(L)$ is a time-1 flow of a
time-dependent vector field on $L$. Denote this vector field
on $L$ by $X_t$ and the corresponding flow by $\phi_t$. Let
$W \cong U \subset T^*L$ be a Weinstein neighborhood of $L$ in
$M$. Identify $L$ with the 0-section in $U$. We can extend $X_t$
in $U$ to a Hamiltonian vector field as follows: in canonical
coordinates $(q,p)$ we define the Hamiltonian $H_t: U \to
\mathbb{R}$ by setting $H_t(q, p) = p( X_t(q))$.  A short
calculation reveals that the Hamiltonian vector field $X_{H_t}
\equiv X_t$ on the zero-section. By multiplying $H_t$ with a
suitable cut-off function one obtains a time-dependent Hamiltonian
with compact support in $W$ such that the induced flow on $L$
coincides with the flow of $\phi_t$. Therefore, by invariance
of $pb_4^+$ under symplectomorphism, $\bp_L$ does not depend on
the choice of representative of a class in $\mathcal{I}(L)$.

We now show independence of the choice of four arcs $\gamma_i$.
Choose four other arcs $\gamma_i'$ in the same fashion. Note that
there exists a $\varphi \in \Diff_0(\SP^1)$ such that $\varphi
(\gamma_i') = \gamma_i$. Using the notation of
Section~\ref{subsec-defn-of-pbL-in-gen-case}, this implies that the image
of $\varphi \circ f$ and $f$ under $\varrho'$ lie in the same
path-connected component of $\cN'(L)$. Using
Proposition~\ref{prop-isom-fibrations-isotopies} we see that
$\varphi \circ f$ and $f$ represent the same equivalence class
in $\cI(L)$. Hence our definition is independent of choice of
arcs.
\end{proof}

Clearly, the resulting function $\bp_L : \cI(L) \to (0,+\infty]$ is a
symplectic invariant of Lagrangian submanifolds fibered over the circle.
We call this function the {\it Poisson-bracket invariant of $L$}.

We now relate $\cI(L)$ to the integral cohomology of $L$.

Consider the set $\cN'(L)$ of all non-singular (that is, non-vanishing)
closed 1-forms on $L$ representing non-zero integral classes in $H^1 (L)$.
It follows easily from Moser's method \cite{Moser}
that the path-connected components of $\cN'(L)$ are exactly the orbits
of the natural $\Diff_0 (L)$-action on $\cN'(L)$. Let $\cN(L) =
\cN'(L)/\Diff_0 (L)$ be the set of the path-connected components of $\cN'(L)$.

Define a map $\varrho': \cI'(L) \to \cN'(L)$ as follows. Given an element
$f \in \cI'(L)$, i.e. a fibration $f: L\to \SP^1$, let $\varrho' (f)$ be
the non-singular 1-form $f^* d\theta$, where $d\theta$ is the standard angle
1-form on $\SP^1$. Clearly, $\varrho': \cI'(L) \to \cN'(L)$ induces a map
$\varrho: \cI(L) \to \cN(L)$.

\begin{prop}\label{prop-isom-fibrations-isotopies}
The map $\varrho: \cI(L) \to \cN(L)$ is invertible.
\end{prop}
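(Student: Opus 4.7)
The plan is to exhibit $\varrho$ as a bijection by constructing explicit representatives in each direction. The key observation is that $\varrho'$ is $\Diff_0(L)$-equivariant for the right action, since $\varrho'(f\circ\phi) = \phi^*(f^*d\theta)$, so the induced map $\varrho$ on quotients is well-defined; moreover, as noted just above the statement, the orbits of $\Diff_0(L)$ on $\cN'(L)$ coincide with the path-components $\cN(L)$. The two directions use rather different classical tools: Ehresmann's fibration theorem for surjectivity, and a flow-lifting argument for injectivity.

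For surjectivity, given $\alpha \in \cN'(L)$, I would fix a basepoint $x_0 \in L$ and define $f \fcolon L \to \SP^1 = \R/\Z$ by $f(x) := \int_{x_0}^{x} \alpha \bmod \Z$, which is well-defined because $\alpha$ has integral periods; by construction $f^*d\theta = \alpha$. Since $\alpha$ is nowhere vanishing, $df$ is nowhere vanishing, so $f$ is a submersion; as $L$ is closed, $f$ is a proper submersion, hence a locally trivial fibration by Ehresmann's theorem. Thus $[f] \in \cI(L)$ satisfies $\varrho([f]) = [\alpha]$.

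For injectivity, suppose $f_1, f_2 \in \cI'(L)$ satisfy $\varrho([f_1]) = \varrho([f_2])$. Then there is $\phi \in \Diff_0(L)$ with $\phi^*\varrho'(f_1) = \varrho'(f_2)$, i.e.\ $\varrho'(f_1\circ\phi) = \varrho'(f_2)$; replacing $f_1$ by $f_1\circ\phi$ (which does not change $[f_1]$), I may assume $f_1^*d\theta = f_2^*d\theta$. Viewing $f_1 - f_2 \fcolon L \to \SP^1$ via the abelian group structure on $\SP^1$, we have $d(f_1 - f_2) = 0$, so $f_1 - f_2 \equiv c$ is constant on the connected manifold $L$, giving $f_1 = R_c \circ f_2$ where $R_c$ denotes rotation by $c$. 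To absorb $R_c$ into a right $\Diff_0(L)$-action, I would choose a smooth vector field $V$ on $L$ with $(f_2)_* V = \partial_\theta$ (which exists by a partition-of-unity argument since $f_2$ is a submersion), and let $\phi'_t$ denote its time-$t$ flow. Then $f_2 \circ \phi'_t = R_t \circ f_2$, and setting $\phi' := \phi'_c$ produces an element of $\Diff_0(L)$ with $f_2 \circ \phi' = f_1$, showing $[f_1] = [f_2]$ in $\cI(L)$.

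The main obstacle is the bookkeeping in the injectivity step: the difference $f_1 - f_2$ only makes sense as a map to $\SP^1$, so the constant $c$ is defined modulo $\Z$, and it is precisely this ambiguity that allows the time-$c$ flow of $V$ to land inside $\Diff_0(L)$ rather than requiring a full period flow that would merely realize a deck transformation of the fibration. The only remaining technical point is the existence of $V$ with $(f_2)_*V = \partial_\theta$, which follows easily from local trivializations of $f_2$ patched together by a partition of unity subordinate to a cover of $L$ by trivializing open sets.
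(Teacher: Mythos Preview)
Your proof is correct and follows essentially the same approach as the paper. The paper packages the argument as constructing an explicit inverse $\rho'\fcolon \cN'(L)\to\cI'(L)$ via the same integration map $\alpha\mapsto\bigl(x\mapsto\int_{x_0}^x\alpha\bmod 1\bigr)$ and then checking that $\varrho'\circ\rho'=\mathrm{Id}$ and that $\rho'\circ\varrho'(f)$ lies in the $\Diff_0(L)$-orbit of $f$; the latter claim amounts exactly to your rotation-absorption step (since $\rho'\circ\varrho'(f)=R_{-f(x_0)}\circ f$), which the paper leaves implicit and you spell out via the lifted vector field $V$.
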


\begin{proof}
Define a map $\rho': \cN'(L) \to \cI'(L)$ as follows. Fix a point $x_0\in L$.
Given a non-singular form $\alpha\in \cN^\prime(L)$, let $\rho' (\alpha)$ be
the map $L\to \SP^1 = \R/\Z$ that sends each $x \in L$ to $\int_{x_0}^x
\alpha$ mod 1. Here the integral is taken along any smooth path from $x_0$
to $x$ in $L$ (recall that $L$ is assumed to be connected); a different choice
of path changes the integral by an integral value, since the cohomology class
of $\alpha$ is integral. One easily checks that $\rho' (\alpha)$ is a fibration
of $L$ over $\SP^1$ and thus $\rho'$ is well-defined. Clearly, $\rho'$ induces
a map $\rho: \cN(L) \to \cI(L)$.

Note that $\varrho'\circ\rho' = Id$ and for any fibration $f \in \cI'(L)$ the
fibration $\rho'\circ \varrho' (f)$ lies in the $\Diff_0 (L)$-orbit of $f$.
This shows that $\rho=\varrho^{-1}$.
\end{proof}

Thus $\bp_L$ is also defined as a function $\bp_L: \cN(L) \to (0,+\infty]$.

Note that a path in $\cN^\prime(L)$ has to lie in the same cohomology
class. This defines a map $\Upsilon: \cN(L) \to H^1 (L)\setminus 0$.
For general $L$ the map $\Upsilon$ does not have to be either surjective or
injective -- to check whether it is surjective for a particular $L$ is,
in general, a very non-trivial task, see \cite{Farrell, Kinsey, Latour, Thurston}.
However, if $\dim L \leq 3$, then $\Upsilon$ is injective -- see e.g.
\cite{Laudenbach-Blank}.
Using $\Upsilon: \cN(L) \to H^1 (L)\setminus 0$ we
define a version $\gobp_L$ of the invariant on the image of $\Upsilon$ in
$H^1 (L)\setminus 0$: for $\alpha \in \mathrm{im}(\Upsilon) \subset H^1
(L)\setminus 0$ we set
\[
\gobp_L (\alpha):=\sup\limits_{A \in \Upsilon^{-1} (\alpha)} \bp_L (A),
\]
and we extend $\gobp_L$ to $0$ via $\gobp_L (0) := +\infty$.

\begin{thm}
\label{thm-gopbL-vs-dfL}
Let $L\subset (M,\omega)$ be a closed Lagrangian submanifold admitting
fibrations over $\SP^1$. Then for all $\alpha \in \mathrm{im}(\Upsilon)\cup \{ 0\}
\subset H^1 (L)$ we have
\[
\gobp_L (\alpha)\leq \df_L (\alpha).
\]
\end{thm}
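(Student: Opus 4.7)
I begin by reducing the inequality $\gobp_L(\alpha) \leq \df_L(\alpha)$ to the componentwise claim $\bp_L([f]) \leq \df_L(\alpha)$ for every isotopy class $[f] \in \cI(L)$ lying in $\Upsilon^{-1}(\alpha)$; taking the supremum over $\Upsilon^{-1}(\alpha)$ then gives the theorem. The boundary case $\alpha = 0$ is handled by the convention $\gobp_L(0) := +\infty$ together with the constant Lagrangian isotopy $\psi_t \equiv \iota$, which has identically zero flux for arbitrarily large $T$ and hence forces $\df_L(0) = +\infty$.

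Fix now a fibration $f\fcolon L \to \SP^1$ with $[f^*d\theta] = \alpha$, and let $(X_0, X_1, Y_0, Y_1)$ be its associated admissible quadruple. Using $\bp_L([f]) = 1/pb_4^+(X_0, X_1, Y_0, Y_1)$, the claim unpacks as the following bilinear statement: for every Lagrangian isotopy $\psi = \{\psi_t\}_{0 \leq t \leq T}$ of $L$ with $\Flux(\psi)_t = -t\alpha$ and every pair $(F, G) \in \cF'(X_0, X_1, Y_0, Y_1)$,
\[
T \cdot \max_M \{F, G\} \;\geq\; 1.
\]
By Theorem~\ref{thm-pb4-dynamical-meaning} this is equivalent to the dynamical assertion that whenever such a $\psi$ exists, the Hamiltonian flow of any complete $G$ with $G|_{Y_0} \leq 0$ and $G|_{Y_1} \geq 1$ must admit a chord from $X_0$ to $X_1$ of time-length $\leq T$.

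To prove the bilinear inequality I would use an integration-by-parts argument in the spirit of those in \cite{EP-tetragons} and \cite{BEP}. First, extend $\psi_t$ to an ambient symplectic (in general non-Hamiltonian) isotopy $\Psi_t\fcolon M \to M$ supported in a tubular neighborhood of the trace $\bigcup_t \psi_t(L)$; its infinitesimal generator $V_t$ satisfies $\iota^*(\iota_{V_t}\omega) = -\alpha$ in $H^1(L;\R)$, and $\psi^*\omega$ decomposes on $[0,T]\times L$ as $dt \wedge \beta_t$ with $[\beta_t] = -\alpha$. I would then evaluate a Stokes-type integral of the form
\[
I \;=\; \int_{[0,T] \times L} \psi^*(dF \wedge dG) \wedge \tau,
\]
where $\tau$ is a closed $(n-1)$-form on $L$ Poincar\'e-dual to a $1$-cycle $C \subset L$ with $\langle \alpha, [C]\rangle = 1$ (such a $C$ exists because $\alpha = [f^*d\theta]$ is integral and $f$ supplies a loop crossing each arc $\gamma_i$ once). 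Writing $dF \wedge dG = d(F\,dG)$, Stokes' theorem together with the pinning conditions defining $\cF'$ on open neighborhoods of the $X_i$ and $Y_j$ collapses the boundary contribution to a discrete ``winding'' count that, combined with the flux identity $\Flux(\psi)_T = -T\alpha$ and the cyclic arrangement $X_0, Y_1, X_1, Y_0$ along $f$, gives $I \geq 1$. On the other hand, inserting the pointwise identity $dF \wedge dG \wedge \omega^{n-1}/(n-1)! = \{F, G\}\,\omega^n/n!$ together with $\psi^*\omega = dt \wedge \beta_t$ and using the compact support of $F, G$ bounds $I \leq T \cdot \max_M \{F, G\}$, completing the argument.

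The hard part will be the combinatorial-cohomological bookkeeping: the fibration $f$ takes values in $\SP^1$ rather than $\R$, so the cycle $C$ and the form $\tau$ must be chosen so that the boundary terms arising from Stokes' theorem at $t = 0$ and $t = T$ collapse -- using only the equalities $F = 0, 1$ and $G = 0, 1$ on neighborhoods of the $X_i, Y_j$ and the flux shift $-T\alpha$ on cohomology -- to the exact integer lower bound $I \geq 1$. Once this identity is nailed down, the upper bound $I \leq T \cdot \max_M\{F, G\}$ is a routine consequence of the pointwise Poisson-bracket identity and the compact support of $F$ and $G$.
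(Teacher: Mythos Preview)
Your reformulation is logically backwards, and the ``bilinear statement'' you try to prove is in fact false. You claim that $\bp_L([f])\leq \df_L(\alpha)$ unpacks as: for \emph{every} Lagrangian isotopy $\psi$ with linear flux $-t\alpha$ on $[0,T]$ and \emph{every} $(F,G)\in\cF'$, one has $T\cdot\max_M\{F,G\}\geq 1$. But short isotopies with the prescribed flux always exist (Weinstein neighborhood), so taking $T$ small and any fixed $(F,G)$ violates this. Unwinding the quantifiers correctly, $\bp_L([f])=\sup_{(F,G)}1/\max_M\{F,G\}$ and $\df_L(\alpha)=\sup_\psi T$, so what is actually needed is: for every $(F,G)\in\cF'$ there \emph{exists} an isotopy $\psi$ with $T$ as close as one likes to $1/\max_M\{F,G\}$. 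This is a construction problem, not an obstruction problem; no Stokes-type estimate over a \emph{given} isotopy can supply it.

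The paper's proof carries out exactly this construction. Given $(F,G)\in\cF'$, one observes that $dF\wedge dG$ vanishes near $L$ (since on a neighborhood of $L=X_0\cup Y_1\cup X_1\cup Y_0$ one of $F,G$ is locally constant), so $L$ stays Lagrangian for the deformed forms $\omega_t:=\omega+t\,dF\wedge dG$. From $\omega_t^n=(1-t\{F,G\})\,\omega^n$ one sees $\omega_t$ is symplectic for $t\in[0,1/\max_M\{F,G\})$; Moser's method then produces a compactly supported isotopy $\vartheta_t$ with $\vartheta_t^*\omega=\omega_t$, and $L_t:=\vartheta_t(L)$ is the desired Lagrangian isotopy. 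The flux computation uses that $F\,dG|_L$ is closed with $[F\,dG|_L]=-\alpha$ --- an ingredient you correctly anticipated --- yielding $[\omega]_{L_t}=[\omega]_L-t\,\partial\alpha$. Hence $1/\max_M\{F,G\}\leq\df_L(\alpha)$, and taking the supremum over $(F,G)$ and then over $[f]$ finishes. Your integral $I$ and the class $[F\,dG|_L]=-\alpha$ are pointing at the right algebraic relation, but they must be fed into Moser's trick to \emph{produce} an isotopy, rather than used to constrain an arbitrary one.
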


\begin{proof}
For $\alpha =0$ we have $\gobp_L (\alpha) = \df_L (\alpha) = +\infty$
verifying the claim.

Let us assume $\alpha\neq 0$. We will prove the inequality $\gobp_L
(\alpha)\leq \df_L (\alpha)$ by a method developed in \cite{BEP}
(cf. \cite{EP-tetragons}). Namely, given $\alpha\in H^1 (L)$, consider
any smooth fibration $f: L \to \SP^1$ such that $\Upsilon\circ \varrho
([f]) = \alpha$ (see Section~\ref{subsec-defn-of-pbL-in-gen-case}
for the notation). Let $X_0,X_1,Y_0,Y_1$ be an admissible quadruple
used in the definition of $\bp_L ([f])= 1/pb_4^+ (X_0,X_1,Y_0,Y_1)$.
Let $(F,G) \in \cF'(X_0,X_1,Y_0,Y_1)$. Note that, since $L =
X_0\cup X_1\cup Y_0\cup Y_1$ and since $F$ is constant on some
neighborhoods of $X_0$ and $X_1$ and $G$ is constant on some
neighborhoods of $Y_0$ and $Y_1$,
\begin{equation}
\label{eqn-dF-wedge-dG-vanishes-on-a-nbhd-of-L}
dF\wedge dG \equiv 0\ \textrm{ on a neighborhood of}\ L,
\end{equation}
and thus $FdG|_L$ is a closed 1-form on $L$. An easy direct
computation shows that
\begin{equation}
\label{eqn-the-class-of-FdG-equals-minus-alpha}
[F  dG|_L] = -\alpha\in H^1 (L).
\end{equation}
Consider the deformation
\[
\omega_t := \omega + t dF\wedge dG, \ \ t\in\R_{\geq 0}.
\]
A direct calculation shows that
\[
dF\wedge dG\wedge \omega^{n-1} = -\frac{1}{n}\{ F,G\} \omega^n,
\]
and thus
\[
\omega_t^n = (1-t \{ F,G\})\omega^n.
\]
Thus, $\omega_t$ is symplectic for any $\displaystyle t \in
I_{(F,G)}$, where
\[
I_{(F,G)} :=\bigg[0, \frac{1}{\max\limits_M \ \{ F,G\} } \bigg).
\]

Fix an arbitrary $t\in I_{(F,G)}$. Since $F,G$ are compactly
supported, the form $\omega$ can be mapped (using Moser's method
\cite{Moser}) to $\omega_t$ by a compactly supported isotopy
$\vartheta_t: (M, \omega_t)\to (M, \omega)$. Since, by
\eqref{eqn-dF-wedge-dG-vanishes-on-a-nbhd-of-L}, $L$ is Lagrangian
with respect to $\omega_t$, we get that $L_t : = \vartheta_t (L)$ is
a Lagrangian submanifold of $(M,\omega)$ Lagrangian isotopic to $L$.

Using \eqref{eqn-the-class-of-FdG-equals-minus-alpha} we readily see
that, under the identification $H^2 (M,L;\R)\cong H^2 (M,L_t;\R)$
induced by the isotopy, the class $[\omega]_{L_t}$ is identified with
$\omega - t\partial\alpha$. Since this is true for all $t\in I_{(F,G)}$
we obtain
\[
\frac{1}{\max\limits_M \ \{ F,G\} } \leq \df_L (\alpha).
\]
Now the latter inequality holds for all $(F,G) \in \cF'(X_0,X_1,Y_0,Y_1)$,
this gives us
\[
\bp_L ([f]):=  1/pb_4^+ (X_0,X_1,Y_0,Y_1)\leq \df_L (\alpha).
\]
This is true for any $[f]$ such that $\Upsilon\circ \varrho ([f]) =
\alpha$, hence
\[
\gobp_L (\alpha)\leq \df_L (\alpha).
\]
\end{proof}

\begin{question}
\label{question-pbL-vs-def}
Do there exist $L$ and $\alpha \in \mathrm{im}(\Upsilon) \subset
H^1 (L)\setminus 0$ for which $\gobp_L (\alpha)\neq \df_L (\alpha)$?
\end{question}

\subsection{The invariant $\bp_L$ for Lagrangian tori}
\label{subsec-defn-of-bpL-for-Lagr-tori}

Let us now assume that $L$ is diffeomorphic to a torus $\T^n$. In
this case the map $\Upsilon$ is clearly surjective. If $n \leq 3$, then,
as mentioned above, $\Upsilon$ is injective and hence bijective. Thus
$\bp_L = \gobp_L$ coincide and by abuse of notation we write the
invariant
\[
\bp_L: H^1 (L) \longrightarrow \R
\]
in these cases (we extend $\bp_L$ to 0 via $\bp_L(0) := +\infty$).

For a torus of dimension strictly greater than 3 the map $\Upsilon$ may
not be injective -- for instance, $\Upsilon$ is known to be not injective
if $n > 5$, see \cite{Laudenbach, Sikorav}. In any case, each isotopy
class of diffeomorphisms $s: \T^n\to L$ defines a right inverse $\Psi_s:
H^1 (L)\setminus 0\to \cN(L)$ of $\Upsilon$ as follows: given $a\in
H^1(L)\setminus 0$, represent $s^* a\in H^1 (\T^n) \setminus 0$ by a
linear form and let $\Psi_s(a)$ be the path-connected component of
$\cN^\prime(L)$ containing the pull-back of this linear form under
$s^{-1}$. Thus for any isotopy class of diffeomorphisms $s: \T^n\to L$
we can define
\[
\bp_L^s := \bp_L \circ \Psi_s : H^1 (L) \longrightarrow \R,
\]
where we again extend $\bp_L$ to 0 via $\bp_L^s(0) := +\infty$. Clearly,
\begin{equation}
\label{eqn-pbs-gopb}
\bp_L^s \leq \gobp_L.
\end{equation}
In case the class of parametrizations $s: \T^n\to L$ is clear we
sometimes write $\bp_L$ by abuse of notation. For instance, if $L$
is a regular fiber of a Hamiltonian $\T^n$-action there is an
obvious preferred isotopy class of diffeomorphisms $\T^n \to L$.

Theorem~\ref{thm-gopbL-vs-dfL} and \eqref{eqn-pbs-gopb} yield
\begin{equation}
\label{eqn-pbL-dfL}
\bp_L \leq \df_L.
\end{equation}

\begin{rem}
\label{rem-bp-of-multiple-classes-defn}
{\rm
The discussion above shows that for a Lagrangian torus $L$, a
cohomology class $\alpha \in H^1 (L)\setminus 0$ and an isotopy
class $s$ of diffeomorphisms $\T^n\to L$
we have
\[
\bp_L^s (\alpha) = 1/pb^+_4 (X_0,X_1,Y_0,Y_1).
\]
Here $(X_0, X_1, Y_0, Y_1)$ is an admissible quadruple associated
to a fibration $f_\alpha: L\to\SP^1$ such that the $\Diff_0 (L)$-orbit
of the 1-form $f_\alpha^* d\theta$ on $L$ (whose cohomology class is
$\alpha$) is $\Psi_s (\alpha)\in \cN(L)$. By the same token,
\[
\bp_L^s (-\alpha) = 1/pb^+_4 (X_0,X_1,Y_1,Y_0)
= 1/pb^+_4 (X_1,X_0,Y_0,Y_1).
\]
Indeed, $f_{-\alpha}$ can be constructed by composing $f_\alpha$
with an orientation-rever\-sing diffeomorphism of $\SP^1$.

Now assume $k\in\N$ and $\alpha\in H^1 (L)$ is a primitive class
(that is, a class which is not a positive integral multiple of
another class in $H^1 (L)$). Then an admissible quadruple
$(X_0, X_1, Y_0, Y_1)$ associated to $f_{k\alpha}$ can be described
in terms of $f_\alpha$. Namely, divide $\SP^1$ in consecutive closed
arcs $\gamma_1, \ldots, \gamma_{4k}$ in the counterclockwise order.
Set
\[
X_0 := f_\alpha^{-1} \bigg( \bigcup\limits_{i\equiv 1\modd 4} \gamma_i \bigg),
\; X_1 := f_\alpha^{-1} \bigg( \bigcup\limits_{i\equiv 3\modd 4} \gamma_i\bigg),
\]
\[
Y_0:=f_\alpha^{-1} \bigg( \bigcup\limits_{i\equiv 0\modd 4} \gamma_i\bigg),
\; Y_1:=f_\alpha^{-1} \bigg( \bigcup\limits_{i\equiv 2 \modd 4} \gamma_i\bigg).
\]
Thus, if $k=1$, then the sets $X_0,X_1,Y_0,Y_1$ are diffeomorphic to
$\T^{n-1}\times [0,1]$, while if $k>1$ the sets are the unions of the
same number of disjoint copies of $\T^{n-1}\times [0,1]$.
}
\end{rem}


\subsection{General properties of $\bp_L$ and $\df_L$}
\label{subsec-pb4-generalities}

\noindent We list basic properties of $\bp_L$ and $\df_L$ that
will be used further in the paper.

\bigskip
\noindent{\sc Homogeneity of $\df_L$:}
\medskip

$\df_L$ is positively homogeneous of degree $-1$,
\[
\df_L(c \alpha) = \df_L(\alpha)/c
\]
for any $c > 0$ and $\alpha \in H^1(L;\R)$.

\begin{question}
Is there an inequality/equality between $\bp_L (k\alpha)$ and
$k\bp_L (\alpha)$ in case $L$ is a Lagrangian torus and $k\in\N$?

\end{question}

\bigskip
\noindent{\sc Semi-continuity of $\df_L$:}
\medskip

Let $L_j \subset (M, \omega)$, $j \in \N$, be a sequence of Lagrangian
submanifolds Lagrangian isotopic to $L$ and converging to $L \subset
(M, \omega)$ in the $C^1$-topology. Then
\begin{equation}
\label{eqn-def-semi-cont}
\df_{L}(\alpha) \leq \liminf_{j\to +\infty} \df_{L_j}(\alpha)\ \textrm{  for any}\ \alpha\in H^1(L; \R).
\end{equation}
(Here we use the canonical isomorphism $H^1(L_j; \R) \cong H^1(L; \R)$).
The inequality follows from a parametric version of the Weinstein neighborhood
theorem.

\medskip
\medskip
\noindent
Now let us consider the general properties of $pb_4^+$. We will use
the following notation: if $U\subset M$ (possibly $U=M$) is an open set
containing an admissible quadruple $X_0,X_1,Y_0,Y_1$  we will denote by
$pb_4^{U,+}(X_0,X_1,Y_0,Y_1)$ the Poisson bracket invariant defined
using functions supported in $U$.

The following properties of $pb_4^+$ and $\bp_L$  follow easily from the
definitions.

\bigskip
\noindent{\sc Monotonicity of $pb_4^+$:}
\medskip

Assume $U\subset W$ are opens
sets in $M$ and $X'_0, X'_1, Y_0', Y'_1\subset U\subset W$
is an admissible quadruple. Let $X_0,X_1,Y_0,Y_1$ be another
admissible quadruple such that $X_0\subset X'_0, X_1 \subset
X'_1, Y_0\subset Y'_0, Y_1 \subset Y'_1$. Then
\begin{eqnarray}
\label{eqn-pb4+-monotonicity}
pb_4^{W,+} (X_0,X_1,Y_0,Y_1) \leq pb_4^{U,+} (X'_0, X'_1,
Y'_0, Y'_1).
\end{eqnarray}

\bigskip
\noindent{\sc Semi-continuity of $pb_4^+$ and $\bp_L$:}
\medskip

Suppose that a sequence
$X_0^{(j)}, X_1^{(j)}, Y_0^{(j)}, Y_1^{(j)}$, $j\in\N$,
of ordered collections converges (in the sense of the
Hausdorff distance between sets) to a collection $X_0,
X_1, Y_0, Y_1$. Then
\[
\limsup_{j\to +\infty} pb_4^+ (X_0^{(j)}, X_1^{(j)},
Y_0^{(j)}, Y_1^{(j)}) \leq pb_4^+ (X_0,X_1,Y_0,Y_1).
\]
Accordingly,
if $L_j \subset (M, \omega)$, $j \in \N$, is a sequence of Lagrangian
submanifolds Lagrangian isotopic to $L$ and converging to $L \subset
(M, \omega)$ in the $C^1$-topology. Then
\begin{equation}
\label{eqn-Hausdorff-convergence}
\bp_L (\alpha) \leq \liminf_{j\to +\infty} \bp_{L_j}(\alpha)\ \textrm{for any}\ \alpha\in H^1(L; \R).
\end{equation}
(Here we use the canonical isomorphism $H^1(L_j; \R) \cong H^1(L; \R)$).

\bigskip
\noindent{\sc Behavior of $pb_4^+$ and $\bp_L$ under products:}
\medskip

Suppose that $M$ and $N$ are connected symplectic manifolds. Equip
$M\times N$ with the product symplectic form. Let $K
\subset N$ be a compact subset. Then for every collection
$X_0, X_1,Y_0,Y_1$ of compact subsets of $M$
\begin{equation}\label{eqn-pb-products}
pb_4^{M\times N,+} (X_0 \times K, X_1 \times K, Y_0
\times K, Y_1 \times K)\leq pb_4^{M,+} (X_0, X_1,Y_0,Y_1).
\end{equation}

The following product property of $\bp_L$ follows immediately
from \eqref{eqn-pb-products}:

\begin{prop}
\label{prop-product-property-of-pbL} Assume $L_i \subset (M_i,
\omega_i)$, $i=1,2$, are Lagrangian tori and $\alpha\in H^1(L_1)$.
Consider the Lagrangian submanifold $L_1\times L_2 \subset (M_1
\times M_2,\omega_1\oplus\omega_2)$ and the cohomology class $\alpha
\times g\in H^1 (L_1\times L_2)$, where $g$ is a generator of $H^0
(L_2)$. Let $s_i: \T^{n_i} \to L_i$ be two isotopy classes of
diffeomorphisms. Then
\[
\bp^{s_1 \times s_2}_{L_1\times L_2} (\alpha\times g) \geq \bp^{s_1
}_{L_1} (\alpha).
\]\Qed
\end{prop}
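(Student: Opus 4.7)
The plan is to reduce the inequality directly to the product property \eqref{eqn-pb-products} of $pb_4^+$ by building an admissible quadruple for $L_1\times L_2$ out of one for $L_1$.

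First I would unpack the left-hand side. By Remark~\ref{rem-bp-of-multiple-classes-defn}, $\bp^{s_1}_{L_1}(\alpha) = 1/pb_4^+(X_0,X_1,Y_0,Y_1)$, where $(X_0,X_1,Y_0,Y_1)$ is an admissible quadruple associated to a fibration $f_\alpha\fcolon L_1\to\SP^1$ whose $\Diff_0(L_1)$-orbit in $\cN(L_1)$ is $\Psi_{s_1}(\alpha)$. Given any $\varepsilon > 0$ I may choose $f_\alpha$ and the slicing of $\SP^1$ so that $1/pb_4^+(X_0,X_1,Y_0,Y_1) \geq \bp^{s_1}_{L_1}(\alpha) - \varepsilon$.

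Next I would lift $f_\alpha$ to the product: let $\pi_1\fcolon L_1\times L_2 \to L_1$ denote the projection, and set $\tf := f_\alpha\circ\pi_1\fcolon L_1\times L_2 \to \SP^1$. This is a smooth fibration, and $\tf^*d\theta = \pi_1^*(f_\alpha^* d\theta)$ is nowhere-vanishing on $L_1\times L_2$ with cohomology class $\pi_1^*\alpha = \alpha\times g$. Using the same slicing of $\SP^1$, the admissible quadruple associated to $\tf$ is exactly
\[
(X_0\times L_2,\ X_1\times L_2,\ Y_0\times L_2,\ Y_1\times L_2).
\]
I would then check that the $\Diff_0(L_1\times L_2)$-orbit of $\tf^*d\theta$ coincides with $\Psi_{s_1\times s_2}(\alpha\times g)$. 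Indeed, $(s_1\times s_2)^*(\alpha\times g)$ is represented on $\T^{n_1+n_2}$ by the linear form $p_1^*\eta$, where $p_1\fcolon\T^{n_1+n_2}\to\T^{n_1}$ is the first coordinate projection and $\eta$ is a linear form on $\T^{n_1}$ representing $s_1^*\alpha$. Pulling back by $(s_1\times s_2)^{-1}$ and using $\pi_1\circ(s_1\times s_2)=s_1\circ p_1$, one obtains $\pi_1^*((s_1^{-1})^*\eta)$. Since $(s_1^{-1})^*\eta$ lies in the same component of $\cN'(L_1)$ as $f_\alpha^*d\theta$ by definition of $\Psi_{s_1}(\alpha)$, a path $\omega_s$ between them in $\cN'(L_1)$ yields the path $\pi_1^*\omega_s$ in $\cN'(L_1\times L_2)$ (non-singularity is preserved since $\pi_1$ is a submersion), as required.

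With this identification in place, applying \eqref{eqn-pb-products} with $K = L_2$ gives
\[
pb_4^{M_1\times M_2,+}(X_0\times L_2,X_1\times L_2,Y_0\times L_2,Y_1\times L_2) \leq pb_4^{M_1,+}(X_0,X_1,Y_0,Y_1),
\]
and taking reciprocals yields $\bp^{s_1\times s_2}_{L_1\times L_2}(\alpha\times g) \geq \bp^{s_1}_{L_1}(\alpha) - \varepsilon$. Letting $\varepsilon\to 0$ finishes the proof. The only non-trivial step is the orbit identification in the middle paragraph, which is the main obstacle and essentially the point where one uses that the parametrization class $s_1\times s_2$ respects the product structure; the rest is a direct application of the product monotonicity of $pb_4^+$.
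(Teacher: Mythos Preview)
Your proof is correct and follows the same route as the paper, which simply says the proposition ``follows immediately from \eqref{eqn-pb-products}''; you have just written out the orbit identification that the paper leaves implicit. One minor point: the $\varepsilon$-approximation is unnecessary, since by Proposition~\ref{prop-defn-of-pb+L-is-correct} the value $\bp_L^s(\alpha)=1/pb_4^+(X_0,X_1,Y_0,Y_1)$ is \emph{exactly} independent of the choice of representative fibration and arc decomposition, so you get equality directly rather than up to $\varepsilon$.
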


\bigskip
\noindent{\sc Behavior of $pb_4^+$ under symplectic reduction:}
\medskip

The following property of $pb_4^+$ did not appear in \cite{BEP},
\cite{EP-tetragons}, but is proved similarly to \eqref{eqn-pb-products}.

Namely, let $(M,\omega)$ be a connected, not necessarily closed,
symplectic manifold. Let $\Sigma \subset (M,\omega)$ be a coisotropic
submanifold. We do not assume that $\Sigma$ is a closed subset of $M$.
Assume that the characteristic
foliation of $\Sigma$ defines a {\it proper} fibration $\pi: \Sigma \to (N,
\eta)$ over a (not necessarily closed) symplectic manifold $(N,\eta)$.

Let $X_0,X_1,Y_0,Y_1\subset N$ be an admissible quadruple (in particular,
the sets are compact). Assume
\[
\hX_0\subset \pi^{-1} (X_0),\ \hX_1\subset \pi^{-1} (X_1),
\ \hY_0\subset \pi^{-1} (Y_0), \ \hY_1\subset \pi^{-1} (Y_1)
\]
are some compact sets in $M$. Then $\hX_0,\hX_1,\hY_0,\hY_1\subset M$ is
an admissible quadruple. Moreover,
\begin{equation}\label{eqn-pb-sympl-reduction}
pb_4^{M,+} (\hX_0,\hX_1,\hY_0,\hY_1)\leq pb_4^{N,+} (X_0, X_1,Y_0,Y_1).
\end{equation}

Indeed, let $F,G$ be functions on $N$ such that $(F,G)\in \cF_N
(X_0,X_1,Y_0,Y_1)$. Consider the functions $F\circ \pi, G\circ \pi$ on
$\Sigma$. Since $\pi:\Sigma\to N$ is proper and $F,G$ are compactly
supported, so are $F\circ\pi, G\circ \pi$. Now cut off the functions
$F\circ \pi, G\circ \pi$ in the isotropic direction normal to $\Sigma$.
As a result we get functions $\chi(r^2) (F\circ\pi), \chi (r^2)
(G\circ\pi)$ with compact support lying in a tubular neighborhood
$U$ of $\Sigma$ -- here $r$ is a radial function on the isotropic
normal bundle to $\Sigma$ with respect to a Riemannian
metric on that bundle and $\chi: \R\to [0,1]$ is a smooth function
supported near $0$ and satisfying $\chi (0)=1$. Extend the two
functions from $U$ to $M$ by zero and denote the resulting functions
on $M$ by $\hF,\hG$. One easily checks that $(\hF,\hG)\in \cF_M
(\hX_0,\hX_1,\hY_0,\hY_1)$ and $\max_M\ \{ \hF,\hG\} = \max_N \
\{F,G\}$, which yields \eqref{eqn-pb-sympl-reduction}.

In the case when $\Sigma$ is a fiber of the moment map of a
Hamiltonian action of a Lie group $H$ on $M$ the reduced space
$N = \Sigma/H$ may not be a smooth symplectic manifold but a
symplectic orbifold, which brings us to the following discussion.

\bigskip
\noindent{\sc The Poisson bracket invariant for symplectic orbifolds:}
\medskip

Recall (see e.g. \cite{Lerman-Tolman-TAMS97}) that the notions of
smooth functions, vector fields and differential forms can be
extended to orbifolds. In particular, there is a well-defined notion
of an orbifold symplectic form; an orbifold equipped with such a
form is called a {\it symplectic orbifold}. Accordingly, there is a
notion of the Poisson bracket of two smooth functions on a
symplectic orbifold and the definition of $pb_4^+$ can be carried
over literally to symplectic orbifolds.

It is easy to check that the proof of \eqref{eqn-pb-sympl-reduction}
goes through in the case when the reduced symplectic space $N$ is an
orbifold.


\section{Lagrangian tori in symplectic surfaces -- proofs}
\label{sec-pf-thm-pbL-surfaces}

\bigskip
\noindent
{\bf Proof of Theorem~\ref{thm-pbL-surfaces}.}

Let us prove that $\df_L (ke) \leq A_+/k$. (The inequality $\df_L (-ke) \leq A_-/k$ is proved
in the same way). If $A_+ = \infty$, the inequality is trivial, so let us assume that $A_+<\infty$.

Fix $\epsilon>0$.
Let $\psi=\{ \psi_t: L\to M\}_{0\leq t\leq T}$ be a Lagrangian isotopy  of $L$
such that $\Flux (\psi)_t=-tke$ for all $0\leq t\leq T$.
By definition of $\df_L (ke)$ and since $\epsilon$ was chosen arbitrarily, it suffices to show that
$T\leq A_+ / k +\epsilon$.

There exists a compact surface $K$ (possibly with boundary) which lies in
$M$ and
contains the union $\cup_{0\leq t\leq T} \psi_t (L)$ of all the
Lagrangian submanifolds appearing in the isotopy. Cap off the boundary components of $K$
lying in $M_+$ (if they exist) by disks and extend the symplectic form from $K$ over the disks
so that the total area of the disks is smaller than $\epsilon$. Denote the resulting compact
symplectic surface by $\hK$ and the symplectic form on it by $\homega$.

Note that $L$ is a Lagrangian submanifold of $(\hK,\homega)$, $\psi$ defines a Lagrangian
isotopy of $L$ in $(\hK,\homega)$ and the Lagrangian flux of the latter Lagrangian isotopy in
$\hK$ is the same as that of the original Lagrangian isotopy in $M$. By our construction, $L$
bounds in $\hK$ a domain {\it without boundary} of area $A < A_+ + \epsilon$. Therefore
$\partial (ke) = k [\homega]_L/A$ and $(\hK,\homega)$ does not admit weakly exact Lagrangian
submanifolds in the Lagrangian isotopy class of $L$ (because any curve in $\hK$ isotopic to
$L$ bounds a domain of positive area). Thus we can apply
Theorem~\ref{thm-partial-alpha-proportional-to-omega-upp-bound-on-bpL-alpha} and get that
$\df_L (ke)$, for $L$ viewed as a Lagrangian submanifold of $(\hK,\homega)$, is not bigger
than $A / k$. On the other hand, by definition, $T\leq \df_L (ke)$. Thus, $T\leq A/k < A_+/k
+ \epsilon$. Since this holds for any Lagrangian isotopy $\psi$ as above and any $\epsilon >0$, by the definition
of $\df_L (ke)$ for $L$ viewed as a Lagrangian submanifold of $(M,\omega)$, we get that
$\df_L (ke) \leq A_+/k$.

Since, by Theorem~\ref{thm-gopbL-vs-dfL}, $\bp_L (\pm ke)
\leq \df_L (\pm ke)$,
it remains to prove that $\bp_L (\pm ke) \geq A_\pm/k$.

We first consider a model situation. Let $\varepsilon, A > 0$ and
denote by $(x, y) \in \R^2$ the coordinates and by $\pi: \R^2 \to \R$
the projection onto the $x$-axis, $\pi(x, y) = x$ . Define
\[
Q(A) := [0, A] \times [0,1] \subset Q_{\varepsilon}(A) := (-\varepsilon,
A + \varepsilon) \times (-\varepsilon,1 + \varepsilon) \subset \R^2.
\]
Label three sides of $Q(A)$ as follows:
\[
\hX_0 = [0, A] \times \{ 0 \}, \ \
\hY_0 = \{ 0 \} \times [0,1], \ \
\hY_1 = \{ A \} \times [0,1].
\]
For the remaining side $[0, A] \times \{ 1 \}$ we choose a partition into
$4k - 3$ closed intervals $\gamma_1, \ldots, \gamma_{4k - 3}$,
ordered from right to left, such that for $i \equiv 0, 2, 3 \, \modd 4$
the intervals $\gamma_i$ have length $\varepsilon$, the interval
$\gamma_1$ has length $\tfrac{A}{k}$ and the remaining intervals
have length $\tfrac{A}{k} - 3\varepsilon$. Set
\begin{align*}
X_0 & := \hX_0 \cup \bigcup\limits_{i\equiv 0\modd 4} \gamma_i,
  & X_1 & :=  \bigcup\limits_{i \equiv 1 \modd 4} \gamma_i, \\
Y_0 & := \hY_0 \cup \bigcup\limits_{i\equiv 2\modd 4} \gamma_i,
  & Y_1 & :=  \hY_1 \cup \bigcup\limits_{i\equiv 0 \modd 4} \gamma_i.
\end{align*}
Now choose a piecewise-linear function $G_{\varepsilon}: Q_{\varepsilon}(A)
\to [0,1]$ that satisfies:
\begin{enumerate}
\item[(i)] $G_{\varepsilon}|_{Y_0} = 0$, $G_{\varepsilon}|_{Y_1} = 1$,
\item[(ii)] $G_{\varepsilon}$ only depends on $x$ in $Q(A)$,
\item[(iii)] $G_{\varepsilon}$ has compact support.
\end{enumerate}
One can choose $G_{\varepsilon}$ to satisfy $\partial_x G_{\varepsilon}
\leq 1 / (\tfrac{A}{k} - 3 \varepsilon)$ on $\pi(X_1) \times [0,1]
\subset Q(A)$, see for example Figure~\ref{figure-surface-upper-bound}.
Therefore there exists a smooth approximation $G$
that satisfies (i), (ii) and (iii) and has slope
$\partial_x G < 1 / (\tfrac{A}{k} - 4 \varepsilon)$ on $\pi(X_1) \times [0,1]$.
This implies that all chords of $G$ from $X_0$ to $X_1$ have time-length
$T >  \tfrac{A}{k} - 4\varepsilon$. Since $\varepsilon > 0$ can be chosen
arbitrarily small, by the dynamical characterization of $\bp_L$ in
Section~\ref{subsec-defn-of-pbL-in-gen-case}, we have
$1 / pb_4^+(X_0, X_1, Y_0, Y_1) \geq \tfrac{A}{k}$.

\begin{figure}[h]
    \centering
    \includegraphics[scale=0.74]{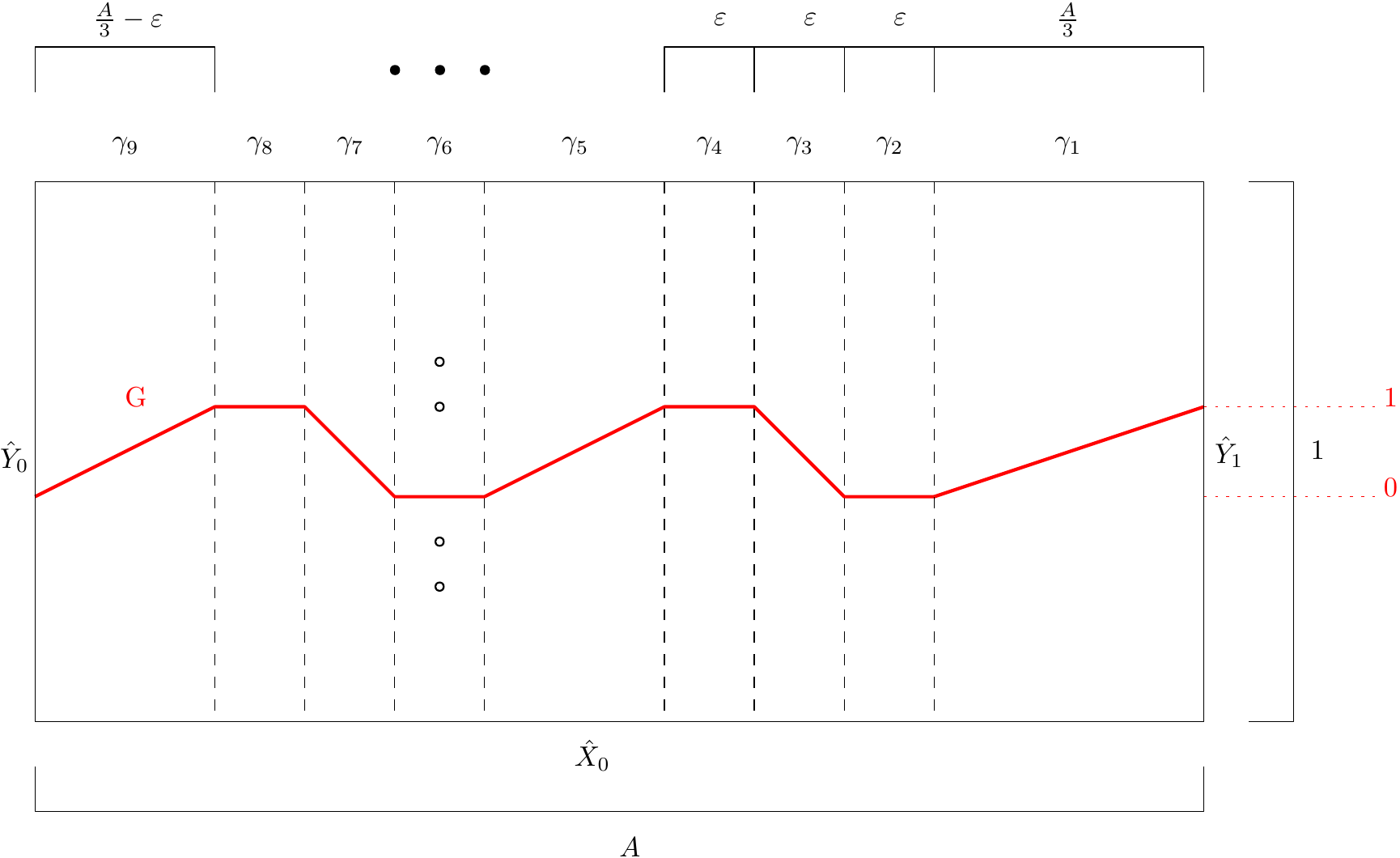}
    \caption{Partition and function in the case $k = 3$}
    \label{figure-surface-upper-bound}
\end{figure}

We now construct a specific neighborhood of $L \subset M$. We first treat the case
where $L$ is a boundary component of $M_+$. Note that the boundary orientation
of $L$ corresponds to $+ e$.

For a small $\delta > 0$ choose $A = A_{+} - \delta$. Let $D_1, \ldots, D_{n} \subset \Int Q(A)$
denote a non-intersecting (possibly empty) finite union of closed
disks of total symplectic area less than $\delta$. Then for a sufficiently small $\varepsilon > 0$
and for an appropriate choice of the disks $D_i$ there exists an open neighborhood $U_{+} \subset \Int M$
of $L$ that can be symplectically identified with $Q_{\varepsilon}(A) \backslash D_1 \cup \ldots \cup
D_{n}$ and so that $L$ (with the orientation corresponding to $+e$) gets mapped arbitrarily close to
$\partial Q(A)$ with the standard boundary orientation. If $\delta > 0$ is chosen small enough, one can
choose the disks $D_i$ to lie in the region $\pi(Y_0) \times [0,1] \subset Q(A)$.
Thus we can extend the function $G$ from $U_+$,
identified with $Q_{\varepsilon}(A) \backslash D_1 \cup \ldots \cup D_{n}$, to the
whole symplectic manifold $M$ by $0$. Then all chords
of $G$ from $X_0$ to $X_1$ have time-length $T >  \tfrac{A}{k} - 4\varepsilon$.
Since $\varepsilon, \delta > 0$ can be chosen arbitrarily small,
together with the semi-continuity and symplectic invariance
properties of $pb_4^+$ (see Section~\ref{subsec-pb4-generalities})
this implies that $\bp_L (ke) \geq A_{+}/ k$.

When $L$ with the orientation $- e$ is a boundary component of $M_-$
a similar construction gives us the lower bound $\bp_L (- ke)
\geq A_{-}/ k$. This completes the proof.
\Qed

\bigskip
\noindent
{\bf Proof of Theorem~\ref{thm-surface-separ-curve}.}

The case $k = \pm 1$ was shown in \cite{Samvelyan}. We generalize this
construction to arbitrary $k \in \Z$.

By Remark~\ref{rem-bp-of-multiple-classes-defn}, we see that an
admissible quadruple associated to $k e$ is given by dividing $L \simeq
\SP^1$ into $4k$ consecutive closed arcs and labeling them
with $X_0, Y_1, X_1, X_1$ following the orientation of $L$ in the
case $k > 0$ and else following the opposite orientation in the case
$k < 0$. Note that, by Proposition~\ref{prop-defn-of-pb+L-is-correct},
$\bp_L (ke)$ does not depend on the choice of subdivision as long as the
order of the $4k$ arcs is preserved.

For $\varepsilon > 0$ consider the symplectic annulus,
\[
(Z_{\varepsilon} = (-\varepsilon, \varepsilon) \times [-1, 1] / \mathord\sim,
\ dx \wedge dy)
\]
where we identify $(x, -1) \sim (x, 1)$. Partition the subset
$(-\varepsilon, \varepsilon) \times \{ 0\}$ into $4k + 1$ consecutive
intervals $\gamma_1, \ldots, \gamma_{4k + 1}$ respecting the
standard orientation on $(-\varepsilon, \varepsilon)$. Set
\[
\hX_0 := \bigcup\limits_{i\equiv 1\modd 4} \gamma_i,
\quad \hX_1 := \bigcup\limits_{i\equiv 3\modd 4} \gamma_i,
\]
\[
\hY_0:= \bigcup\limits_{i\equiv 0\modd 4} \gamma_i,
\quad \hY_1:= \bigcup\limits_{i\equiv 2 \modd 4} \gamma_i.
\]
One can easily construct functions $F, G: Z_{\varepsilon}
\to [0,1]$ such that $(F, G) \in \cF' (\hX_0,\hX_1,\hY_0,\hY_1)$
and $\{ F, G \} \equiv 0$, namely choose $F$ and $G$ to only
depend on the $x$-coordinate.

The Lagrangian $L \subset M$ is not a separating curve. Thus
there exists an embedded loop $\gamma \subset M$ that intersects
$L$ exactly once. Furthermore, one can find a tubular neighborhood $N$
of $\gamma$ that is symplectomorphic to $Z_{\varepsilon}$ for
$\varepsilon > 0$ small enough. The symplectomorphism can be
chosen to map $\gamma$ to $\{0\} \times [-1,1]$ and
$L \cap N$ to $(-\varepsilon, \varepsilon) \times \{ 0 \}$.

We now make a specific choice of admissible quadruple
$X_0,X_1,Y_0,Y_1$ on $L$. Namely choose $X_0 \subset L$ such
that $L \backslash N \subset X_0$ and $X_0 \cap N = \hX_0$,
$X_1 \cap N = \hX_1$, $Y_0 \cap N = \hY_0$ and
$Y_1 \cap N = \hY_1$ (or, if the order requires, use the
reflection of $\hX_0,\hX_1,\hY_0,\hY_1 \subset Z_{\varepsilon}$
along the $y$-axis). Since $F, G$ have compact support in
$Z_{\varepsilon}$, we may pull back the functions to $M$
and obtain functions on $M$ that satisfy
\eqref{eqn-F-G-X-Y-ineqs-defn-pb4}, have compact support and
Poisson-commute. This proves the theorem.
\Qed

\bigskip
\bigskip
\noindent{\sc A lower bound on $\bp_L$ for orbifolds:}
\medskip

The proof of the lower bound for $k \in \N$,
\begin{equation}
\label{eqn-pbL-upper-bound-orbifold}
A_\pm/k \leq \bp_L (\pm k e),
\end{equation}
from the proof of Theorem~\ref{thm-pbL-surfaces} can also be modified for the
orbifold case.

Indeed, the singular points of a 2-dimensional symplectic orbifold $M$ form a discrete
subset of $M$ \cite{Gordon} and one can assume without loss of generality that they
all lie in $M\setminus U_+$ where the function $G$ vanishes identically (and thus can
be extended over the singularities).

\bigskip
\bigskip
\noindent{\sc Symplectic reduction and $\bp_L$:}
\medskip

Let $(M^{2n},\omega)$ be a connected, not necessarily closed,
symplectic manifold. Let $\Sigma^{n+1} \subset (M,\omega)$ be a
smooth coisotropic submanifold. We do not assume that $\Sigma$ is
a closed subset of $M$. Assume that the characteristic foliation
of $\Sigma$ defines a {\it proper} fibration $\pi: \Sigma \to (N,
\eta)$ over a (not necessarily closed) 2-dimensional symplectic
orbifold $(N^2,\eta)$. Let $L\subset (M,\omega)$ be a Lagrangian
torus lying in $\Sigma$ and $s$ an isotopy class of diffeomorphisms
$\T^n \to L$. Assume that $\pi|_L : L\to \Gamma$ is a fiber bundle
over a simple closed curve $\Gamma:=\pi (L)$ lying in the
non-singular part of $N$ and dividing $N$ into two domains of
areas $A_+$ and $A_-$.

Equip $\Gamma$ with an orientation induced by the orientation of
the domain of area $A_+$. The orientation of $\Gamma$ defines a
positive generator $e\in H^1 (\Gamma)\cong\Z$.

Assume that $k\in \N$ and $\alpha\in H^1 (L)$ is a primitive class.
Assume that there exists an orientation-preserving diffeomorphism
$\SP^1\to\Gamma$ that identifies the fiber bundle $\pi|_L : L \to
\Gamma$ with a fibration $f_\alpha: L\to\SP^1$ associated to $s$
and $\alpha$ -- see Remark~\ref{rem-bp-of-multiple-classes-defn}.
Then an admissible quadruple associated to a fibration $f_{k\alpha}$
can be described in terms of $f_\alpha$ as follows (see
Remark~\ref{rem-bp-of-multiple-classes-defn}).

Divide $\SP^1\cong \Gamma$ in $4k$ consecutive closed arcs $\gamma_1,
\ldots, \gamma_{4k}$ numbered in the counterclockwise order. Set
\[
X_0 := \bigcup\limits_{i\equiv 1\modd 4} \gamma_i,
\ X_1 := \bigcup\limits_{i\equiv 3 \modd 4} \gamma_i,
\]
\[
Y_0:=\bigcup\limits_{i\equiv 0\modd 4} \gamma_i,
\ Y_1:=\bigcup\limits_{i\equiv 2\modd 4} \gamma_i,
\]
and furthermore
\[
\hX_0:= \pi^{-1} (X_0)\cap L = f_\alpha^{-1} (X_0),\
\hX_1:= \pi^{-1} (X_1)\cap L = f_\alpha^{-1} (X_1),
\]
\[
\hY_0:= \pi^{-1} (Y_0)\cap L = f_\alpha^{-1} (Y_0),\
\hY_1:= \pi^{-1} (Y_1)\cap L = f_\alpha^{-1} (Y_1).
\]
By Remark~\ref{rem-bp-of-multiple-classes-defn}, $(\hX_0, \hX_1,
\hY_0, \hY_1)$ is an admissible quadruple associated to $f_{k\alpha}$.

Combining this observation with \eqref{eqn-pb4+-monotonicity} and
\eqref{eqn-pb-sympl-reduction} (in the orbifold case), we immediately
get:
\[
\bp_L^s (k\alpha) = 1 / pb^{M,+}_4 (\hX_0, \hX_1, \hY_0, \hY_1)
\geq  1 / pb^{N,+}_4 (X_0, X_1, Y_0, Y_1) =\bp_\Gamma (k e).
\]
In view of
\[
\bp_L^s (-k\alpha) = 1 / pb^{M,+}_4 (\hX_0, \hX_1, \hY_1, \hY_0)
\]
(see Remark~\ref{rem-bp-of-multiple-classes-defn}) and
\eqref{eqn-pbL-upper-bound-orbifold} (in the orbifold case)
this yields the following claim.

\begin{prop}
\label{prop-upper-bound-on-pb-4-plus-for-toric-actions-two-dim-orbifold}
With the setup as above, for any $k \in \N$ we have
\[
\bp^s_L (k \alpha)\geq A_+/k
\]
and
\[
\bp^s_L (-k \alpha)\geq A_-/k.
\]
\Qed
\end{prop}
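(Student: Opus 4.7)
The plan is to deduce the proposition directly from the two-dimensional orbifold case of Theorem~\ref{thm-pbL-surfaces} by passing through the coisotropic reduction $\pi\colon \Sigma\to N$. The point is that the fibration $\pi|_L\colon L\to\Gamma$ turns a fibration $f_{k\alpha}\colon L\to\SP^1$ associated to $s$ and $k\alpha$ into a fibration $\Gamma\to\SP^1$ corresponding to the generator $ke\in H^1(\Gamma)$, so admissible quadruples on $\Gamma$ pull back to admissible quadruples on $L$ of exactly the type appearing in the definition of $\bp^s_L(\pm k\alpha)$.

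First I would fix the combinatorial data on the circle. Divide $\Gamma\cong \SP^1$ into $4k$ consecutive closed arcs $\gamma_1,\ldots,\gamma_{4k}$ (counterclockwise with respect to the orientation induced by the $A_+$-side), and set $X_0,X_1,Y_0,Y_1$ to be the unions of those $\gamma_i$ with $i\equiv 1,3,0,2\ \modd 4$ respectively. By Remark~\ref{rem-bp-of-multiple-classes-defn}, $(X_0,X_1,Y_0,Y_1)$ is an admissible quadruple on $\Gamma$ associated to $ke\in H^1(\Gamma)$. Then the preimages
\[
\hX_i := \pi^{-1}(X_i)\cap L = f_\alpha^{-1}(X_i),\qquad \hY_j := \pi^{-1}(Y_j)\cap L = f_\alpha^{-1}(Y_j)
\]
form an admissible quadruple on $M$ associated (via the hypothesis identifying $\pi|_L$ with $f_\alpha$) to the fibration $f_{k\alpha}$. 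Here I am using precisely the combinatorial recipe of Remark~\ref{rem-bp-of-multiple-classes-defn} for expressing the quadruple for $k\alpha$ in terms of the one for $\alpha$.

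Next, combining the monotonicity \eqref{eqn-pb4+-monotonicity} with the symplectic-reduction inequality \eqref{eqn-pb-sympl-reduction} (whose orbifold version, as noted in the paper, follows by the same cut-off argument since the proof only uses smooth pullback and isotropic cut-off), I get
\[
pb_4^{M,+}(\hX_0,\hX_1,\hY_0,\hY_1)\;\leq\; pb_4^{N,+}(X_0,X_1,Y_0,Y_1),
\]
and hence
\[
\bp^s_L(k\alpha) \;=\; \frac{1}{pb_4^{M,+}(\hX_0,\hX_1,\hY_0,\hY_1)} \;\geq\; \frac{1}{pb_4^{N,+}(X_0,X_1,Y_0,Y_1)} \;=\; \bp_\Gamma(ke).
\]
Now the orbifold version of the lower bound in Theorem~\ref{thm-pbL-surfaces} (established in the paragraph \eqref{eqn-pbL-upper-bound-orbifold}, where the isolated orbifold singularities can be arranged to lie away from the neighborhood $U_+$ on which the constructed Hamiltonian is nonzero) gives $\bp_\Gamma(ke)\geq A_+/k$, proving the first inequality.

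The second inequality is obtained by the same argument applied with the roles of $Y_0$ and $Y_1$ swapped: by Remark~\ref{rem-bp-of-multiple-classes-defn}, $\bp^s_L(-k\alpha) = 1/pb_4^{M,+}(\hX_0,\hX_1,\hY_1,\hY_0)$, the reduction inequality yields $\bp^s_L(-k\alpha)\geq \bp_\Gamma(-ke)$, and the orbifold version of Theorem~\ref{thm-pbL-surfaces} gives $\bp_\Gamma(-ke)\geq A_-/k$. The main, and essentially only, nontrivial ingredient is verifying that the cut-off construction behind \eqref{eqn-pb-sympl-reduction} goes through when the base is an orbifold; this is where one invokes the fact that the singular locus of a 2-orbifold is discrete and can be avoided by the support of the reduced Hamiltonian, together with the standard extension of smooth calculus and Poisson brackets to orbifolds as in \cite{Lerman-Tolman-TAMS97}.
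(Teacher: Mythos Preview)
Your proposal is correct and follows essentially the same approach as the paper: set up the admissible quadruples on $\Gamma$ via Remark~\ref{rem-bp-of-multiple-classes-defn}, pull them back through $\pi|_L$, apply the orbifold version of the reduction inequality \eqref{eqn-pb-sympl-reduction} together with monotonicity, and finish with the orbifold lower bound \eqref{eqn-pbL-upper-bound-orbifold}. The treatment of the $-k\alpha$ case by swapping $Y_0$ and $Y_1$ also matches the paper exactly.
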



\section{Toric orbits in symplectic toric manifolds -- proofs}
\label{sec-pf-thm-toric-case-low-bound}

In this section we prove Theorem~\ref{thm-toric-case-low-bound}.

Set $k\alpha:=(m_1,\ldots,m_n)\in (\Z^n)^*=H^1 (\T^n)$, where $\alpha$ is
a primitive class and $k\in\N$.

Complete $\alpha$ to an integral basis $\alpha_1,\ldots,\alpha_{n-1},
\alpha$ of $H^1 (\T^n)=(\Z^n)^*$ and let $\beta_1,\ldots,\beta_n$ be
the dual integral basis of $\Z^n$. Thus, we have a splitting
\[
\Lied \T^n = \Span_\R \{ \alpha_1,\ldots,\alpha_{n-1}\}\oplus
\Span_\R \{ \alpha\}.
\]
Let $\pi_1:  \Lied \T^n\to \Span_\R \{\alpha_1,\ldots,\alpha_{n-1}\}$
and $\pi_2:  \Lied \T^n\to \Span_\R\{ \alpha\}$ be the projections
defined by the splitting.

Consider the $(n-1)$-dimensional subtorus $H\subset \T^n$ whose Lie
algebra is
\[
\Lie H:= \Ker\, \alpha = \Span_\R \{
\beta_1,\ldots,\beta_{n-1}\}\subset \R^n = \Lie \T^n.
\]
The map $\Lied \T^n\to \Lied H$ dual to the inclusion $\Lie H\to\Lie
\T^n$ can be identified with the projection $\pi_1$.

Since $H$ is a subtorus of $\T^n$, there is a Hamiltonian action of
$H$ on $(M,\omega)$ whose moment map $\Phi_H$ can be described as the
composition of $\Phi$ and $\pi_1$. Thus, $L_\bx=\Phi^{-1} (\bx)$ lies
in a fiber $\Sigma'_\bx$ of $\Phi_H$ which is the union of the fibers
$\Phi^{-1}(\by)$ for all $\by\in\Delta$ such that $\pi_1 (\by) = \pi_1
(\bx)$. The set $\Sigma'_\bx$ may be an orbifold, but its smooth part
$\Sigma_\bx\subset\Sigma'_\bx$, which is the union of the fibers
$\Phi^{-1} (\by)$ for $\by \in \Int\Delta$ with $\pi_1 (\by) = \pi_1
(\bx)$, is a smooth coisotropic $(n+1)$-dimensional submanifold of $M$.

The torus $H$ acts on $\Sigma_\bx$ and the reduced space $\Sigma_\bx/H$
is a 2-dimensional symplectic orbifold $N_\bx$. The natural projection
$\pi_H:\Sigma_\bx\to\Sigma_\bx/H$ is proper and its fibers are exactly
the leaves of the characteristic foliation of $\Sigma_\bx$. The
1-dimensional torus $\T^n/H$ acts in a Hamiltonian way on $N_\bx$.
In fact, the torus $\T^n/H$ can be identified with a subtorus of
$\T^n$ whose Lie algebra is $\Span_\R \{ \beta_n\}\subset \Lie\T^n$.
Accordingly, $\Lied (\T^n/H)$ is identified with $\Span_\R \{\alpha\}
\subset \Lied \T^n$. Thus, the moment map $\Phi_{\T^n/H}$ of the $
\T^n/H$-action on $N_\bx$ can be viewed as a map $\Phi_{\T^n/H}:
N_\bx\to \Span_\R \{ \alpha\}$.

A well-known property of Hamiltonian group actions (the so-called
``reduction in stages", see e.g. \cite[Exercise III.12]{Audin-toric-book-2nd-ed})
implies that the orbits of the original $\T^n$-action on $(M,\omega)$
lying in $\Sigma_\bx$ project under $\pi_H: \Sigma_\bx \to N_\bx$ to
the orbits of the $\T^n/H$-action on $N_\bx$ and for any $\by\in\Lied
\T^n \cap \Delta$ such that $\pi_1 (\bx)=\pi_1 (\by)$ we have
\[
\pi_2\circ \Phi  =  \Phi_{\T^n/H} \circ \pi_H \;\;\textrm{on}\;\;
\Phi^{-1}(\by).
\]
Therefore the image of the moment map $\Phi_{\T^n/H}$ can be
identified with the image under $\pi_2$ of the intersection of the
line $\bx - t \alpha$, $t\in\R$, in the affine space $(\R^n)^*=\Lied
\T^n$ with $\Delta= \Image\Phi$.

Another conclusion is that the torus $L_\bx\subset \Sigma_\bx$, which
is an orbit of the $\T^n$-action on $(M,\omega)$, projects under
$\pi_H: \Sigma_\bx\to\Sigma_\bx/H$ to a simple closed curve
$\Gamma\subset N_\bx$ that lies in the non-singular part of the
orbifold $N_\bx$. The curve $\Gamma$ is an orbit of $\T^n/H$ and as
such can be identified with $\SP^1$ -- this identification is unique
up to a rotation of $\SP^1$. Moreover, under this identification the
map
\[
\pi_H |_{L_\bx}: L_\bx\to \Gamma
\]
becomes a fibration $f_\alpha: L_\bx\to\SP^1$ associated to $\alpha$
-- see Remark~\ref{rem-bp-of-multiple-classes-defn}. (Indeed, $f_\alpha$
can be viewed as the projection $L_\bx\to L_\bx/H=\SP^1$).

The symplectic properties of $\Gamma$ inside $N_\bx$ are completely
determined by the relative position of the point $\pi_2
(\bx)=\Phi_{\T^n/H} (\Gamma)$ in the image of $\Phi_{\T^n/H}$ or,
equivalently, by the position of $\bx$ in the intersection of the
line $\bx - t \alpha$, $t\in\R$, with $\Delta$. Recall that $\cI
(\bx,\alpha)$ is defined as the open part of the intersection of the
ray $\bx - t \alpha$, $0<t< +\infty$, with $\Delta$. By a basic
version of the Delzant theorem \cite{Delzant}, the oriented curve
$\Gamma$ (it is oriented as an orbit of $\T^n/H$) is the oriented
boundary of a domain $D$ in $N_\bx$ whose area is the rational
length of $\cI (\bx,\alpha)$, if $\cI (\bx,\alpha)$ is an interval,
or $+\infty$, if $\cI (\bx,\alpha)$ is an infinite ray.

Thus, we can apply
Proposition~\ref{prop-upper-bound-on-pb-4-plus-for-toric-actions-two-dim-orbifold},
which implies that
\[
\bp_{\bx} (k \alpha)\geq l_\bx (\alpha)/k
\]
for $k \in \N$.\Qed


\section{Lagrangian tori in $\C^2$ -- proofs}
\label{sec-lagr-tori-in-C2-pfs}

\subsection{Proof of Theorem~\ref{thm-lagrangian-tori-in-R4}}
\label{subsec-lagr-tori-in-R4-pf}

The original idea of the proof below belongs to E. Opshtein.

Consider a Lagrangian isotopy
\[
\psi = \{ \psi_t: L \to \C^2 \}, \ 0 \leq t \leq T, \
\psi_0 = \iota,
\]
such that
\[
[\omega]^{\psi}_t = [\omega]_L - t \partial\alpha,
\]
with $L \subset \C^2$ and $\alpha$ as in the statement
of the theorem.

For simplicity let us consider a dual picture. Namely,
for $0 \leq t \leq T$ there exists a family of compactly supported
diffeomorphisms $\varphi_t: \C^2 \to \C^2$, $\varphi_0 = \mathrm{Id}$,
such that $\varphi_t(L) = \psi_t(L)$. By pulling back our symplectic
form $\varphi_t^*\omega =: \omega_t$ we may consider a fixed Lagrangian
$L \subset (\C^2, \omega_t)$. We have $H_2(\C^2, L) \simeq \Z\langle A,
B \rangle$ and $\omega_t(A) = \omega(A) - {\sigma} t$ and $\omega_t(B) =
\omega(B) - {\rho} t$.

Let $\cJ_t$ be the space of almost complex structures on $\C^2$ compatible with the symplectic form $\omega_t$.
Given an almost complex structure $J \in \cJ_t$, by a {\it (parameterized) $J$-holomorphic disk} we always
mean a smooth $J$-holomorphic map $u: (\D, \partial \D)\to (\C^2, L)$ (here $\D\subset \C$ is the standard closed
unit disk). By the homology class of a $J$-holomorphic disk $u$ we always mean the relative homology class
$u_*([\D]) \in H_2 (\C^2, L)$. Such a $u$ is called {\it somewhere injective}, if $du (z)\neq 0$ for some $z\in \D$
such that $u^{-1} (u(z))=\{ z\}$.

Given a relative homology class $C\in H_2 (\C^2, L)$ define $\tcM (C, J)$ as the moduli
space of somewhere injective (parametrized) $J$-holomorphic disks in the class $C$. Let $\tcM (C, J)\times \partial\D$
be the moduli space of $J$-holomorphic disks with one marked point on the boundary. $PSL_2 (\R)$ is the group
of biholomorphisms of $\D$ and we consider the quotient
\[
\cM_1 (C, J) := (\tcM (C, J)\times \partial\D) / PSL_2 (\R),
\]
where the action of $PSL_2 (\R)$ is defined as $g\cdot (u,x) = (u\circ g, g^{-1} (x))$. This space comes with
an evaluation map $ev: \cM_1 (C, J) \to L$ given by $ev([u,x]) := u(x)$ and for a chosen point $p \in L$ we define
$\cM_1 (C, J, p) := ev^{-1}(p)$.

Given a family $\{ J_t\}$, $0\leq t\leq T$, of almost complex structures $J_t\in \cJ_t$ and a class
$C\in H_2 (\C^2, L)$, define $\cM_1 (C, \{ J_t\})$ as the set of pairs $(t,D)$, where $0\leq t\leq T$
and $D\in \cM_1 (C, J_t)$.
The set $\cM_1 (C, \{ J_t\},p)\subset \cM_1 (C, \{ J_t\})$ is defined analogously
with $D \in \cM_1 (C, J_t, p)$.

We will show that for $A$ as in the assumption of the theorem
and for a generic family $\{ J_t\}$ the set
$\cM_1 (A, \{ J_t\},p)$ is a smooth compact manifold of dimension 1.

We recall first some general facts about the moduli spaces.

We say that $J \in \cJ_t$, $t\in [0,T]$, is {\it regular}, if for all $C\in H_2 (\C^2,L)$ the space $\cM_1 (C,J)$
is a (transversally cut out) smooth manifold of dimension
\[
\dim \cM_1 (C,J) = \dim L + \mu(C) + 1 - \dim PSL_2 (\R) = \mu(C).
\]
An almost complex structure $J\in\cJ_t$ is called {\it regular with respect to $p\in L$}, if it is regular and, in addition, 
$\cM_1 (C,J,p)$ is a (transversally cut out) smooth manifold of dimension
\[
\dim \cM_1 (C,J,p) = \dim \cM_1 (C, J) - \dim L = \mu (C) - 2.
\]
We will say that a family  $\{ J_t\}$, $0\leq t\leq T$, $J_t\in \cJ_t$, is {\it regular
with respect to $p$} if
\begin{enumerate}
\item[(1)] for any $t\in [0,T]$ the spaces $\cM_1 (C, J_t)$ are empty for all $C$ with $\mu (C)<0$,
\item[(2)] $\cM_1 (C, \{ J_t \}, p)$ is a (transversally cut out) smooth manifold of dimension
\[
\dim \cM_1 (C, \{ J_t \},p) = \dim \cM_1 (C, J, p) +1 = \mu (C) - 1
\]
with boundary $\cM_1 (C, J_0, p) \cup \cM_1 (C, J_T, p)$.
\end{enumerate}
Similarly, given a (regular) path $\gamma (s)$, $0\leq s\leq 1$, in $L$, and $t\in [0,T]$, we say that a family $\{ J_s\}_{0\leq s\leq 1}\subset \cJ_t$ is {\it regular with respect to $\gamma$},
if
\begin{enumerate}
\item[(1')] for any $s\in [0,1]$ the spaces $\cM_1 (C, J_s)$ are empty for all $C$ with $\mu (C)<0$,
\item[(2')] $\cM_1 (C, \{ J_s\}, \gamma) := \cup_{0\leq s\leq 1} \cM_1 (C, J_s, \gamma (s))$
is a smooth manifold of dimension $\mu (C) - 1$ with boundary $\cM_1 (C, J_0, \gamma (0)) \cup \cM_1 (C, J_1, \gamma (1))$.
\end{enumerate}

It follows from standard regularity and transversality arguments (see e.g. \cite{McD-Sal-psh-book},
\cite{Oh-book}, where the arguments are explained in detail for pseudo-holomorphic curves
without boundary) that for any $p\in L$

- a generic $J \in \cJ_t$ is regular and moreover regular with respect to $p$,

- for any $p\in L$ and any $J_0\in \cJ_0$, $J_T \in \cJ_T$ that are regular with respect to $p$, a generic family $\{ J_t \}$, $0\leq t\leq T$, $J_t\in \cJ_t$, connecting $J_0$
and $J_T$ satisfies (2).

- for any $\gamma$ as above, any $t\in [0,T]$ and any $J_0, J_1\in \cJ_t$ regular, respectively, with respect to $\gamma (0)$ and $\gamma (1)$,
a generic family $\{ J_s\}_{0\leq s\leq 1}\subset \cJ_t$ connecting $J_0, J_1$
satisfies (2').

In order to show that condition (1) also holds for a generic family $\{ J_t\}$ note that $L$ is
orientable and, accordingly, the Maslov index of {\sl any} disk with boundary on $L$ is even.
Thus, if $\mu (C)$ is negative, then $\mathrm{virtual} \dim \cM_1 (C, J)\leq -2$ for any $J\in \cJ_t$ and $t\in [0,T]$,
meaning that the existence of {\sl somewhere injective} $J_t$-holomorphic disks of negative
Maslov index is a codimension-2 phenomenon and can be avoided by choosing a generic
1-parametric family $\{ J_t\}$.
This shows that (1) holds for a generic family $\{ J_t\}$. Similarly, one can show that (1') holds for a generic family $\{ J_s\}$ as above.

In fact, we claim that there are no
$J_t$-holomorphic disks of negative Maslov index, somewhere injective or not. Indeed,
let $\{ J_t\}$ satisfy (1). By a result of Kwon-Oh
\cite{KO} (cf. \cite{Lazz2000, Lazz2011}), any non-parameterized $J_t$-holomorphic disk
in the class $C$
with boundary in $L$, viewed as a subset of $\C^2$, is a finite union of non-parameterized somewhere
injective $J_t$-holomorphic disks $\cD_1,\ldots, \cD_j$ and
$C= k_1[\cD_1]+\ldots+k_j [\cD_j]$, where
for each $i$ $[\cD_i]$ is the relative homology class of $\cD_i$ and $k_i\in\N$.
If $k_1\mu (\cD_1) +\ldots + k_j\mu (\cD_j) = \mu (C) <0$, then $\mu (\cD_i)<0$ for some
$i=1,\ldots,j$, in contradiction to the non-existence of somewhere injective $J_t$-holomorphic
disks of negative Maslov index, which proves the claim.

\begin{lemma}
\label{lem-lagr-tori-in-R4-no-non-const-maslov-0-disks}
For any (possibly not even regular) $J\in \cJ_t$, $0\leq t\leq T$, there
are no non-constant $J$-holomorphic Maslov-0 disks of area less than $\omega_t (A)$,
somewhere injective or not (recall that $A \in H_2(\C^2, L)$ is the class appearing in the hypothesis
of the theorem).
\end{lemma}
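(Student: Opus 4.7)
The plan is to reduce everything to a linear computation on $H_2(\C^2,L)\simeq\Z\langle A,B\rangle$, since for any (parameterized) $J$-holomorphic disk $u\colon(\D,\partial\D)\to(\C^2,L)$ the Maslov index and the symplectic area depend only on the relative class $u_*[\D]=aA+bB$ with $a,b\in\Z$; in particular both are insensitive to whether $u$ is somewhere injective or a multiple cover. First I would impose the Maslov-$0$ condition $2a+2kb=\mu(aA+bB)=0$, which forces $a=-kb$, so the symplectic area of $u$ is
\[
\omega_t(u_*[\D])=b\bigl(\omega_t(B)-k\omega_t(A)\bigr).
\]

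The key inequality to establish is $\omega_t(B)-k\omega_t(A)\geq\omega_t(A)$. Writing $\omega_t(A)=\omega(A)-\sigma t$ and $\omega_t(B)=\omega(B)-\rho t$ and subtracting, this amounts to showing
\[
\bigl(\omega(B)-(k+1)\omega(A)\bigr)-\bigl(\rho-(k+1)\sigma\bigr)t\geq 0,
\]
which follows termwise from the two hypotheses $\omega(B)/\omega(A)\geq k+1$ (making the constant summand nonnegative) and $\rho/\sigma\leq k+1$ (making the coefficient of $t$ nonpositive), together with $t\geq 0$ and $\sigma>0$. In the only nontrivial case $\omega_t(A)>0$ this gives $\omega_t(B)-k\omega_t(A)\geq\omega_t(A)>0$; if instead $\omega_t(A)\leq 0$ the statement of the lemma is vacuous, since an $\omega_t$-tame $J$-holomorphic non-constant disk has strictly positive area.

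To finish I would combine these observations: because $u$ is non-constant its $\omega_t$-area is strictly positive, and since $\omega_t(B)-k\omega_t(A)>0$ by the previous step, positivity of $b\bigl(\omega_t(B)-k\omega_t(A)\bigr)$ forces $b\geq 1$. Hence the area of $u$ is at least $\omega_t(B)-k\omega_t(A)\geq\omega_t(A)$, ruling out a non-constant Maslov-$0$ disk of area strictly less than $\omega_t(A)$. There is no real obstacle in this argument; the only point worth flagging is that $a$ and $b$ are defined purely homologically, so multiple covers of any simple disk (the only place where lack of somewhere-injectivity could be relevant) enter through exactly the same inequality and require no separate treatment.
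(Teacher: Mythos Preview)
Your proof is correct and follows essentially the same approach as the paper: both reduce to the observation that a Maslov-$0$ class must be an integral multiple of $B-kA$, then verify the key inequality $\omega_t(B-kA)\geq\omega_t(A)$ via $\omega(B)-(k+1)\omega(A)\geq 0$ and $(k+1)\sigma-\rho\geq 0$, and conclude that no positive integral multiple of $\omega_t(B-kA)$ can lie strictly between $0$ and $\omega_t(A)$. Your explicit coordinatization $C=aA+bB$ with $a=-kb$ and your separate handling of the vacuous case $\omega_t(A)\leq 0$ are minor expository differences only.
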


\bigskip
\noindent
{\bf Proof of Lemma~\ref{lem-lagr-tori-in-R4-no-non-const-maslov-0-disks}:}
Indeed, assume by contradiction that such a disk exists and denote its relative homology class by $C$. By the
hypothesis of the theorem, $\omega(B) \geq (1+k)\omega(A)$ and $(1+k){\sigma} \geq {\rho}$, so that
\[
\omega_t( B - k A) - \omega_t(A) = \omega(B) - (1+k) \omega(A) + ((1+k){\sigma} - {\rho})t \geq 0.
\]
Moreover, $\omega_t (C)>0$, since $C$ is non-constant and $J$ is compatible with $\omega_t$.
Thus
\begin{equation}
\label{eqn-omega-t-A-C}
\omega_t( B - k A) \geq \omega_t(A) > \omega_t (C) >0.
\end{equation}
On the other hand, since $\mu (C)=0$, by the hypothesis of the theorem, $C$ is an integral multiple
of $B - k A$, and therefore $\omega_t (C)$ has to be an integral multiple of $\omega_t( B - k A)$,
which is impossible by \eqref{eqn-omega-t-A-C}. This proves the lemma.\Qed

Continuing with the proof of the theorem,
we now show that for any almost complex structures $J_0\in \cJ_0$, $J_T \in \cJ_T$ that are
regular with respect to $p\in L$ and any
family $\{ J_t\}$, $0\leq t\leq T$, $J_t\in \cJ_t$, that connects
$J_0$ and $J_T$ and is regular with respect to $p$, the moduli space
$\cM_1 (A, \{ J_t\},p)$ is compact and hence, in particular,  so are $\cM_1 (A,J_0,p)$
and $\cM_1 (A,J_T,p)$.

It suffices to show that any sequence $\{ D_i \}$ in $\cM_1 (A, J_{t_i}, p)$ with
$\{ t_i\}\to s$ has a subsequence converging to an element of $\cM_1 (A, J_s, p)$. To prove this
claim note that, by Gromov compactness (see \cite{Frauenfelder}), {since $\pi_2 (\C^2)=0$}, the sequence $\{ D_i \}$ has
a subsequence converging to a bubbling configuration of non-constant non-parameterized
$J_s$-holomorphic disks $\cD_1,\ldots , \cD_l$ with boundary in $L$ whose homology classes
$[\cD_1],\ldots, [\cD_l]$ add up to $A$:
\[
[\cD_1]+\ldots + [\cD_l] = A.
\]
Thus, $0 < \omega_s ([D_i]) \leq \omega_s (A)$ for all $i=1,\ldots, l$ and therefore,
{by Lemma~\ref{lem-lagr-tori-in-R4-no-non-const-maslov-0-disks}},
none of the disks $\cD_i$ has Maslov index $0$. Moreover, since the family $\{ J_t \}$ is regular with respect to $p$, none
of the disks $\cD_i$ has a negative Maslov index. Since $\mu ([\cD_1])+\ldots + \mu ([\cD_l]) = \mu (A)=2$
and the Maslov indices of the $\cD_i$'s are all even, this means that $l=1$ -- that is, there is only one disk
in the bubbling configuration and its relative homology class is $A$.

Let us denote this (non-parameterized) disk by $D$. By the result of Kwon-Oh \cite{KO} (cf.
\cite{Lazz2000, Lazz2011}) mentioned above, the non-parameterized disk $D$, viewed as a subset of
$\C^2$, is a finite union of non-parameterized somewhere injective disks $D^{(1)},\ldots,D^{(m)}$
and $A$ is a linear combination with positive integral coefficients of the homology classes of
$D^{(1)},\ldots,D^{(m)}$. The $\omega_s$-areas of $D^{(1)},\ldots,D^{(m)}$ are all positive
numbers smaller than $\omega_s (A)$. Therefore, arguing as above, we get that all
$D^{(1)},\ldots,D^{(m)}$ must have positive even Maslov indices, meaning that $m=1$
and $D = D^{(1)} \in \cM_1(A,J_s, p)$.
Hence the sequence $\{ D_i \}$ has a subsequence converging to an element of $\cM_1 (A, J_s, p)$.
This finishes the proof that the smooth manifold
$\cM_1 (A, \{ J_t\}, p)$ is compact.

Thus the moduli space $\cM_1 (A, \{ J_t\}, p)$ is a compact smooth
1-dimensional cobordism between the compact 0-dimensional manifolds $\cM_1 (A,J_0,p)$ and
$\cM_1 (A,J_T,p)$.

In a similar way one can show that given a (regular) path $\gamma (s)$, $0\leq s\leq 1$, in $L$, $t\in [0,T]$
and a family $\{ J_s\}\subset \cJ_t$ regular with respect to $\gamma$, the moduli space
$\cM_1 (A, \{ J_s\}, \gamma)$ is a compact smooth
1-dimensional cobordism between the compact 0-dimensional manifolds $\cM_1 (A,J_0,\gamma (0))$ and
$\cM_1 (A,J_1,\gamma (1))$.

Let us summarize: for any $t\in [0,T]$ and any $J \in \cJ_t$ regular with respect to $p$ the moduli space $\cM_1 (A, J, p)$ is a compact
0-dimensional manifold. The number
$n_A (p,J): = \# \cM_1 (A,J,p)\mod 2$ -- that is, the mod-2 number of non-parameterized
$J$-holomorphic disks with one marked point that represent $A$ and pass through $p$ --
is independent of $p$ and $J$. Indeed, for a different $p'\in L$ and an almost complex structure $J'\in \cJ_t$
regular with respect to $p'$ the manifolds $\cM_1 (A, J, p)$ and $\cM_1 (A, J', p')$ are cobordant and therefore
$n_A (p,J) = n_A (p',J')$. This proves
part (A) of the theorem.

Moreover, for any $J_0 \in \cJ_0$ and  $J_T \in \cJ_T$ regular with respect to $p$ we have
$n_A (p,J_0) = n_A (p,J_T)$, since $\cM_1 (A,J_0,p)$ and $\cM_1 (A,J_T,p)$ are cobordant compact
$0$-dimensional manifolds.

In view of the above, if $n_A (p,J_0)$ is non-zero, then so is $n_A (p,J_T)$
for $J_0 \in \cJ_0$ and  $J_T \in \cJ_T$ regular with respect to $p$. In particular,
there exists a $J_T$-holomorphic disk in $\C^2$ with
boundary in $L$ in the relative homology class $A$ and therefore
\[
\omega_T (A) = \omega (A) - {\sigma}T >0,
\]
and hence
\[
T< \omega(A)/{\sigma}.
\]
Since this holds for any Lagrangian isotopy $\{ \psi_t\}_{0\leq t\leq T}$ as above, we get that
$\df_L (\alpha)$, which is the supremum of such $T$, is less or equal to $\omega (A)/{\sigma}$:
\[
\df_L (\alpha)\leq \omega (A)/{\sigma}.
\]
This proves part (B) of the theorem.
\Qed

\subsection{Proof of Theorem~\ref{thm-chekanov-tori-v2}}
\label{subsec-pf-thm-chekanov-tori-v2}

We calculate lower bounds for the $\bp$-invariants of
Chekanov tori $\Theta_a$. The proofs of both parts rely
on an increasing sequence of neighborhoods of $\Theta_a$ in $\C^2$.
We first explain this construction.

For $r \in \R_{> 0}$ let $D(r) \subset \C$ denote the standard open symplectic disk of
area $r$ and $T^*_r\SP^1 \subset T^*\SP^1$ be the subset of covectors
of norm $< r$ (here we choose the flat metric on $\SP^1 = \R / \Z$). Let $\lambda$
be the standard Liouville form on $T^*\SP^1$.

\begin{prop}\label{prop-nbhds-chekanov-torus}
For every $r > a$ there exists a neighborhood $U(r) \subset \C^2$ of
$\Theta_a$ such that $U(r)$ is symplectomorphic to $(D(r) \times
T^*_r\SP^1, dx \wedge dy + d\lambda)$ and $\Theta_a$ is identified with
$\partial D(a) \times \{ 0-section\}$.
\end{prop}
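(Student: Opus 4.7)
The plan relies on the Hamiltonian $\SP^1$-action $\phi_\sigma(z_1,z_2)=(e^{2\pi i\sigma}z_1,e^{-2\pi i\sigma}z_2)$ on $\C^2$, whose moment map is $\mu(z_1,z_2)=\pi(|z_1|^2-|z_2|^2)$. By construction $\Theta_a$ is $\SP^1$-invariant and lies in $\mu^{-1}(0)=\{|z_1|=|z_2|\}$, so symplectic reduction is the natural tool. Identifying the reduced symplectic manifold $\mu^{-1}(0)/\SP^1$ with $(\C,dx\wedge dy)$ via the coordinate $W=\sqrt{2P}\,e^{i\alpha}$, where $P=(|z_1|^2+|z_2|^2)/2$ and $\alpha=(\theta_1+\theta_2)/2$, a direct substitution in the defining formula for $\Theta_a$ shows that under the reduction map $\pi\colon \mu^{-1}(0)\to \C$ the torus $\Theta_a$ descends to the curve $\gamma(t)=\eta(t)$ itself. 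In particular $\gamma$ sits in the open first quadrant of the reduced $\C$, bounds symplectic area exactly $a$, and the disk $\overline{D_\gamma}=\overline{D_\eta}$ it encloses does \emph{not} contain the origin $0\in\C$ (the singular point of the reduction).

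I would then carry out the construction in two steps. First, choose an open disk $V\subset\C\setminus\{0\}$ containing $\overline{D_\eta}$ with total symplectic area $r$, and apply Moser's theorem on symplectic $2$-disks to obtain a symplectomorphism $\Phi\colon V\to D(r)$ sending $\eta$ to $\partial D(a)$. Second, lift to $\C^2$: since $V$ is simply connected and disjoint from the reduction singularity, the principal $\SP^1$-bundle $\pi^{-1}(V)\cap\mu^{-1}(0)\to V$ is trivial, and extending a trivializing section in the direction transverse to $\mu^{-1}(0)$ (via the coisotropic neighborhood theorem, equivalently via the action-angle coordinates $(P,T,\alpha,\beta)$ with $T=(|z_1|^2-|z_2|^2)/2$ and $\beta=(\theta_1-\theta_2)/2$, in which $\omega=dP\wedge d\alpha+dT\wedge d\beta$) produces, for some $\delta>0$, a symplectic embedding of $V\times T^*_\delta\SP^1$ with the product form onto an open neighborhood of $\Theta_a$ in $\C^2$. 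Composition with $\Phi\times\mathrm{id}$ then yields the required $U(r)\cong D(r)\times T^*_r\SP^1$ as soon as $\delta\geq r$, mapping $\Theta_a$ to $\partial D(a)\times\{0\text{-section}\}$.

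The main obstacle is precisely this size condition $\delta\geq r$. In the coordinates above, positivity of $|z_1|^2=P+T$ and $|z_2|^2=P-T$ forces $|T|<P=|W|^2/2$ throughout the embedding, which with the natural rescaling $\tau=2\pi T$ of the cotangent coordinate translates into the pointwise bound $\min_t|\eta(t)|^2>r/\pi$ on $V$. For a fixed Chekanov torus and large $r$ this may fail, but it is easy to arrange by exploiting the freedom in the Chekanov construction to place the base point $q\in Q$ (and hence the foliation) sufficiently far from the origin of $\C$; since different choices of foliation produce Hamiltonian isotopic tori, this causes no loss of generality.
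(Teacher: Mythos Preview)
Your approach and the paper's are essentially the same: both exploit the Hamiltonian $\SP^1$-action $(z_1,z_2)\mapsto(e^{2\pi is}z_1,e^{-2\pi is}z_2)$, identify $\Theta_a$ inside the coisotropic level $\{|z_1|=|z_2|\}$, and invoke the coisotropic neighborhood theorem (Gotay) to obtain a product neighborhood $D(a+\delta)\times T^*_\varepsilon\SP^1$ for some small $\varepsilon,\delta>0$. Your explicit action--angle computation showing that $\Theta_a$ descends precisely to $\eta$ in the reduced $\C$ is correct and matches the paper's map $\Xi(e^{2\pi it},z)=\tfrac{1}{\sqrt{2}}(e^{2\pi it}z,e^{-2\pi it}z)$.

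The only substantive difference is how the size condition for large $r$ is met. You translate $q$ (and hence $\eta$) far from the origin so that your constraint $\min|\eta|^2>r/\pi$ holds, appealing to the fact that different Chekanov representatives are Hamiltonian isotopic. The paper instead observes that the already-constructed small neighborhood contains \emph{all} smaller Chekanov tori $\Theta_{a'}$, $0<a'\le a$, and then applies the conformal map $\varphi_c(z)=\sqrt{c}\,z$ with $c=a/a'$: this sends $\Theta_{a'}$ to a torus Hamiltonian isotopic to $\Theta_a$ while dilating the neighborhood to $D(c(a+\delta))\times T^*_{c\varepsilon}\SP^1$, and $c$ can be made arbitrarily large by taking $a'$ small. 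Both devices rely on the same Hamiltonian-isotopy fact; the paper's rescaling avoids your explicit coordinate bound but is otherwise equivalent (conformal scaling is just a particular radial displacement of $\eta$).
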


\begin{proof}
Let $\eta$ be the simple closed oriented curve in the open first quadrant $Q\subset \C$ used in the construction of the
Chekanov torus $\Theta_a$. The curve bounds a disk of area $a$ which is contained in
a larger open disk ${\cD}\subset Q$ of area $a+\delta$ for some small $\delta>0$.
The map
$\Xi : \SP^1 \times {\cD} \to \C^2$
given by
\[
\Xi(e^{2\pi i t}, z) = \frac{1}{\sqrt{2}}(e^{2 \pi it} z, e^{-2 \pi i t} z)
\]
is an embedding and its image contains $\Theta_a$ as well
as the Chekanov tori $\Theta_{a'}$ for every $0 < a' \leq a$. Denote the image
of $\Xi$ by $N$. We see that $\Xi$ preserves the symplectic
structure on ${\cD}$ and that $N$ is a coisotropic submanifold
with characteristic foliation generated by the $\SP^1$-action.
By the neighborhood theorem for coisotropic submanifolds
\cite{Gotay}, there exists a neighborhood $U$ of $N$ in $\C^2$
that is symplectomorphic to a neighborhood of the zero-section
in $E^*$, where $E \subset TN$ is the characteristic bundle
of $N$. One sees that the characteristic bundle of $N$ is
trivial, hence $E^* \simeq N \times \R \simeq {\cD} \times \SP^1
\times \R$. Since the {disk $\cD$ is} symplectic, the neighborhood
$U$ is symplectomorphic to a neighborhood $V$ of ${D (a+\delta)} \times
\{ 0-section\}$ in $D(a+\delta) \times T^*\SP^1$
with its standard split symplectic form $dx \wedge dy + d\lambda$.
By choosing a smaller neighborhood $U$ if necessary, we can
assume that $U$ is symplectomorphic to
$V = D (a+\delta) \times T^*_{\varepsilon} \SP^1$
for an $\varepsilon > 0$
so that $\Theta_a \subset U$ is identified with $\partial D(a) \times  \{ 0-section\}
\subset V$.

For $c > 0$ the map $\varphi_c: \C^2 \to \C^2$ given by
$(z_1, z_2) \mapsto \sqrt{c} (z_1, z_2)$  is a conformal
symplectomorphism. Recall that the neighborhood $U$ contains
all Chekanov tori $\Theta_{a'}$ for $0 < a' \leq a$. Now
let $0 < a' < a$ and $c = a / a'$,
then the image $\varphi_c(\Theta_{a'})$ is a Lagrangian
torus that is Hamiltonian isotopic to $\Theta_a$. The image
of the neighborhood, $\varphi_c(U)$, is then a neighborhood
of $\varphi_c(\Theta_{a'})$.
The neighborhood $\varphi_c(U)$ is symplectomorphic to
$V' = D(c(a+\delta)) \times T^*_{c\varepsilon} \SP^1$
(recall that $\SP^1 = \R / \Z$ and $\lambda = p \, d\theta$ is the standard Liouville form).

Let us now construct the wanted neighborhoods $U(r)$ for $r > a$. For
a given $r > a$ we choose $a' > 0$ so that for $c = a / a'$ we have
$c \, dx \wedge dy(D) > r$ and
$c \, \varepsilon > r$. Then $D(r) \times T^*_r \SP^1
\subset V'$ and this gives us a neighborhood $U(r)$.
Since $a' > 0$ can be chosen arbitrarily small, this
provides us with a neighborhood $U(r)$ for all $r > a$.
The statement follows from this.
\end{proof}

We return to the proof of part (A). For the lower bound
$\bp_{a} (m, n) \geq a/m$, following the dynamical
characterization of $\bp_{L}$ in
Section~\ref{subsec-defn-of-pbL-in-gen-case}, we use
a neighborhood that we obtain from
Proposition~\ref{prop-nbhds-chekanov-torus}
to construct for any $\varepsilon > 0$ a complete Hamiltonian
$H$ that has no chords from $X_0$ to $X_1$ of time-length
$< \tfrac{a}{m} - \varepsilon$.

For $m \geq 1$ and $n \in \Z$ fixed we choose $k \in
\N$ such that $k > a$ and $k > C := 4 a |n|$. Now
let $U(k)$ be a neighborhood of $\Theta_a$ as in
Proposition~\ref{prop-nbhds-chekanov-torus} and we
identify $U(k) \simeq D(k) \times T^*_k \SP^1$. Recall
that $\Theta_a$ is mapped to $\partial D(a) \times \{ 0-section\}$
under this identification.
Furthermore, we identify $\C \simeq \R^2$ and write the coordinates of
$D(k) \times T^*_k \SP^1$ as $(x,y, \theta, p)$.

First we define a partition of $\partial D(a)$.
Namely, partition $\SP^1$ into $4m$ closed arcs by setting
\[
\gamma_j := \left\{ e^{2\pi i \xi} \in \SP^1 \; \Big| \;
\xi \in \left[\frac{j - 1}{4m}, \frac{j}{4m} \right] \right\},
\;\; j = 1, \ldots, 4m.
\]
Using the identification $\partial D(a) \simeq \SP^1$ we
consider the $\gamma_j$ to be arcs of $\partial D(a)$. For $\theta
\in \R/\Z$ denote by $R(\theta): \R^2 \to \R^2$ the rotation by angle
$2\pi \theta$.
In $D(k) \times T^*_k \SP^1$ we define the
four sets,
\begin{equation}\label{eqn-admissible-quadruple}
\begin{aligned}
X_0 & = \bigcup\limits_{j \equiv 1\modd 4} \left\{ (R(n\theta)(x,y),
\theta, 0) \ | \  (x,y) \in \gamma_j, \ \theta \in \R/\Z \right\}, \\
X_1 & = \bigcup\limits_{j \equiv 3\modd 4} \left\{ (R(n\theta)(x,y),
\theta, 0) \ | \ (x,y) \in \gamma_j, \ \theta \in \R/\Z \right\}, \\
Y_0 & = \bigcup\limits_{j \equiv 0\modd 4} \left\{ (R(n\theta)(x,y),
\theta, 0) \ | \ (x,y) \in \gamma_j, \ \theta \in \R/\Z \right\}, \\
Y_1 & = \bigcup\limits_{j \equiv 2\modd 4} \left\{ (R(n\theta)(x,y),
\theta, 0) \ | \ (x,y) \in \gamma_j, \ \theta \in \R/\Z \right\},
\end{aligned}
\end{equation}
A brief calculation reveals that under the identification
$U(k) \simeq D(k) \times T^*_k \SP^1$ the quadruple
is mapped to an admissible quadruple associated to the fibration
$f_\alpha: \Theta_a \to \SP^1$, where $\alpha =
m\hGamma - n\hgamma \in H^1(\Theta_a)$.

Consider now the sector $\mathcal{S} \subset D(k)$ given by
\[
\mathcal{S} :=  D(k) \cap \left\{ r e^{2\pi i \xi} \; \Big|
\; r \geq 0, \ \xi \in \left[\frac{-1}{8m}, \frac{7}{8m}\right]
 \right\}.
\]
The intersection $\mathcal{S}\cap D(a)$ has area $a/m$ and
$\partial \mathcal{S}$ intersects $\partial D(a)$ in the arcs
$\gamma_4$ and $\gamma_{4m}$.
We consider a Hamiltonian $G: \mathcal{S} \to \R$ that
satisfies the following:
\begin{enumerate}
\item[(i)] $G$ has compact support in $\mathrm{Int}(\mathcal{S})$,
\item[(ii)] $G \equiv 0$ in a neighborhood of $(\gamma_{4m} \cup
\gamma_4) \cap \mathcal{S}$,
\item[(iii)] $G \equiv 1$ in a neighborhood of $\gamma_2$,
\item[(iv)] all chords of $G$ from $\gamma_1$ to $\gamma_3$ have
time-length $T > \tfrac{a}{m} - \varepsilon$,
\item[(v)] on $\mathcal{S}\cap D(a)$ the Euclidean norm of the gradient of $G$ is
bounded, $|(\partial_x G, \partial_y G)| \leq \tfrac{4 m}{\sqrt{a}}$.
\end{enumerate}

The existence of such a Hamiltonian follows easily. See
Figure~\ref{figure-pizza-slice-in-traj} for an illustration.
Namely, one can choose a piecewise-linear function that satisfies
all conditions except (iv) and then find a smooth approximation that
satisfies all conditions.

\begin{figure}[h]
    \centering
    \includegraphics[scale=0.75]{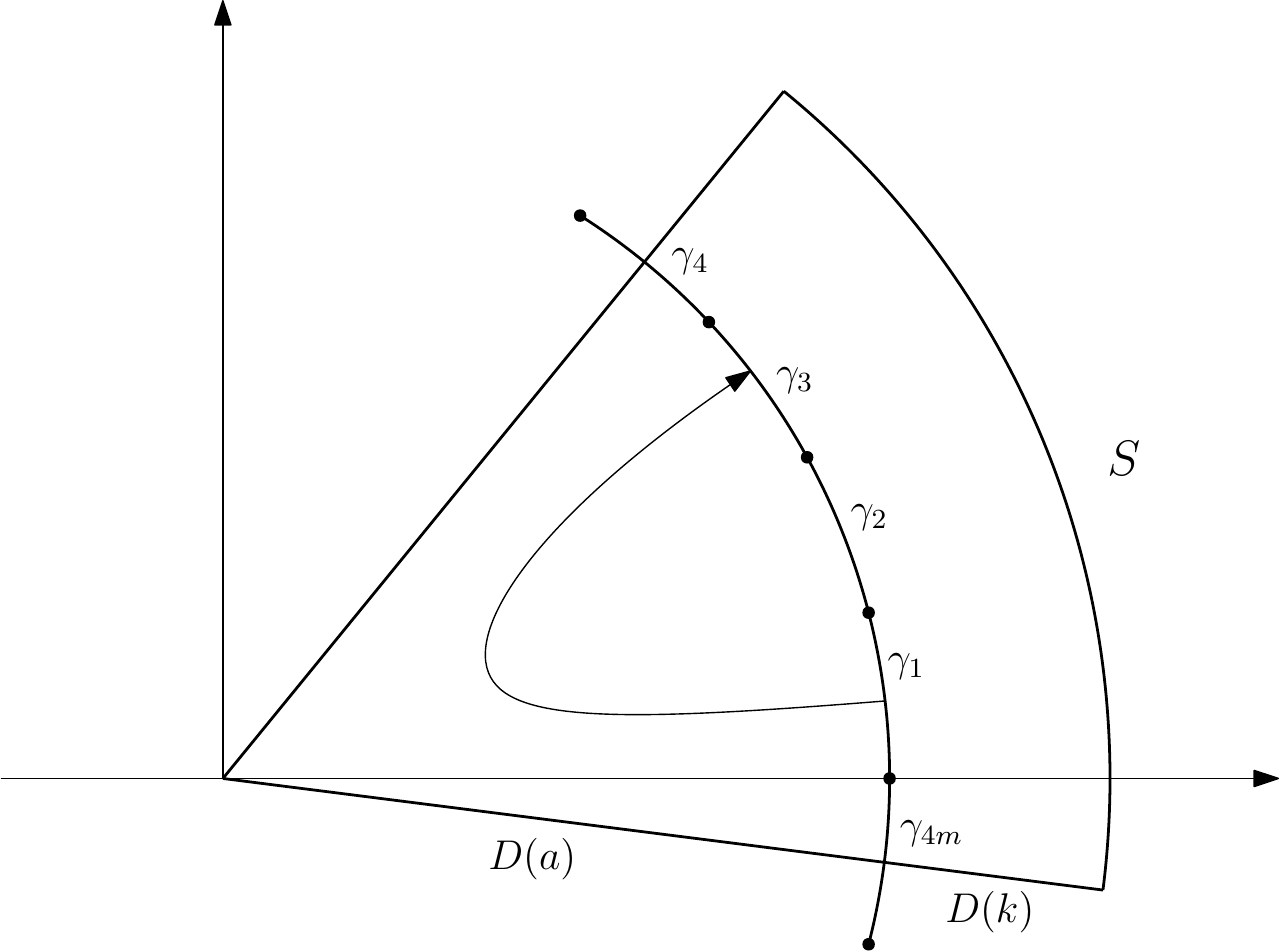}
    \caption{Hamiltonian chord of $G$ from $\gamma_1$ to $\gamma_3$
    in $\mathcal{S}$.}
    \label{figure-pizza-slice-in-traj}
\end{figure}

Extend $G$ to the whole disk $D(k)$ such that $G$ is periodic
under the rotation $R(1/m)$.
Since $G$ has support in $\mathrm{Int}(\mathcal{S})$,
this extension is smooth and has compact support. By abuse of
notation we denote the extension by $G$. Now choose a non-negative
function $\beta: T^*_k \SP^1 \to \R$ that is constant equal
to 1 for $|p| \leq C$ and has compact support in $T^*_k\SP^1$.

Define the Hamiltonian $H: D(k) \times T^*_k \SP^1 \to \R$ via
\[
H(x, y, \theta, p) = G(R(n\theta)(x,y))\beta(p).
\]
By construction $H$ is complete and satisfies $H \equiv 0$ on
$Y_0$ and $H \equiv 1$ on $Y_1$.
Since $\beta(p)$ is constant in $\{ |p| \leq C \}$, we see
that $\partial_p H \equiv 0$ in this region. This implies that
the Hamiltonian vector field of $H$ in $\{ |p| \leq C \}$ is
tangent to the fibers $\{ \theta = \mathrm{constant} \}$.
Thus all Hamiltonian chords of $H$ starting on
$\partial D(a) \times \{ 0-section \}$ and
contained in $\{ |p| \leq C \}$ will project
to the fibers.

Now for $\{ |p| \leq C \}$ and $(x,y) \in D(a)$ we estimate the
Euclidean norm of the differential,
\[
|\partial_{\theta} H| \leq |(\partial_x G, \partial_y G)| \cdot |n|
\cdot |(x,y)| \leq \frac{4m |n|}{\sqrt{\pi}}.
\]
This implies that all Hamiltonian chords
of $H$ starting in $\partial D(a) \times \{ 0-\mathrm{section} \}$
remain in the region $\{ |p| \leq C \}$ for times $t \in [0, a/m]$
by our choice of $C$. Therefore any chord of time-length $t \leq
a/m$ is contained in the fiber $\{ \theta = \mathrm{constant} \}$
and under the projection $D(k) \times T^*_k \SP^1 \to D(k)$ these
chords project to rotated chords of $G$. This proves that
any chord from $X_0$ to $X_1$ must have time-length
$T > a/m - \varepsilon$. This completes the proof of
part (A).

\medskip
\medskip
We prove part (B). We construct
a sequence of Hamiltonians that have chords from $X_0$ to
$X_1$ with time-length increasing to $+\infty$.

\begin{figure}[h]
    \centering
    \includegraphics[scale=0.75]{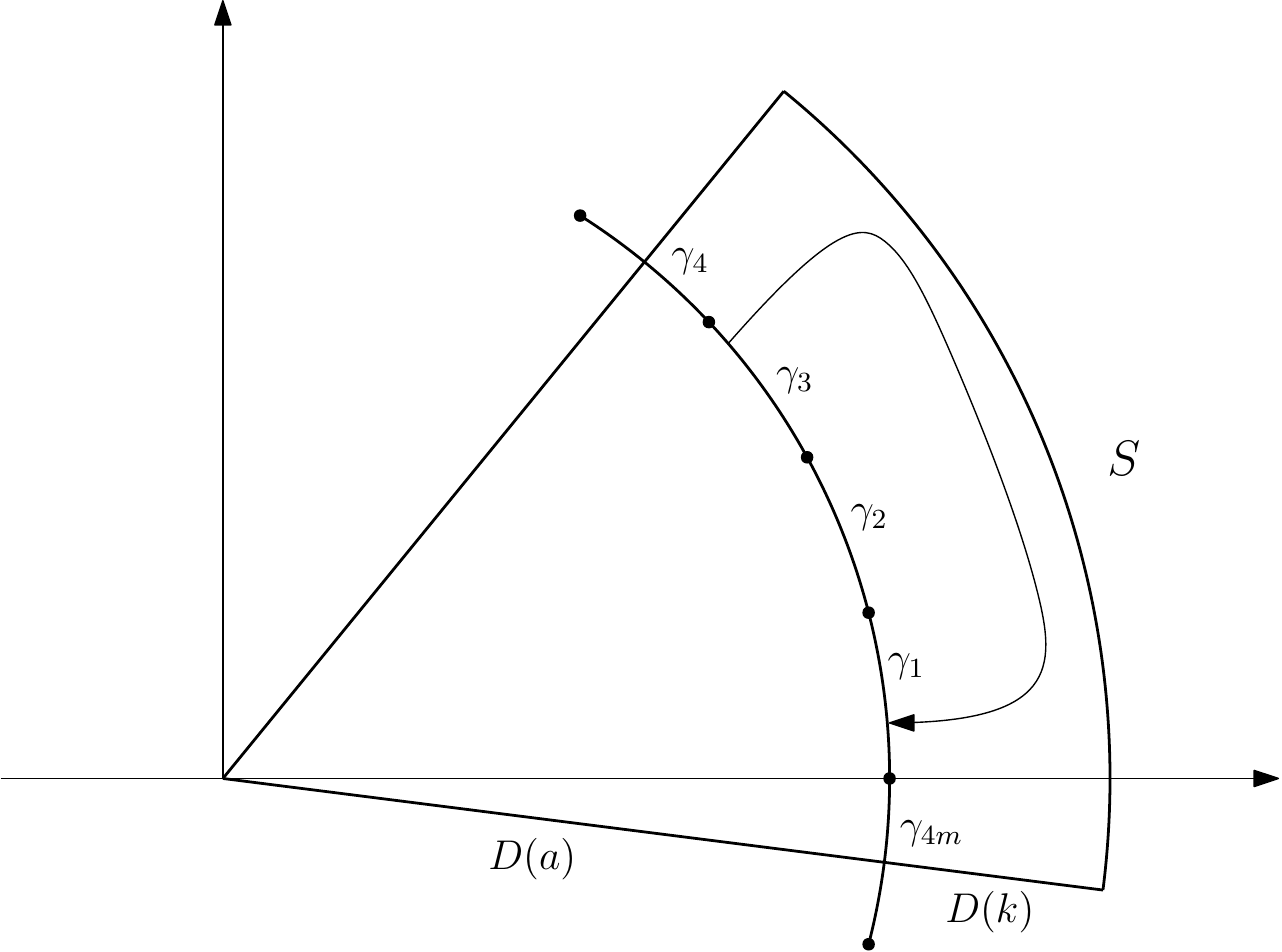}
    \caption{Hamiltonian chord of $G$ from $\gamma_3$ to $\gamma_1$
    in $\mathcal{S}$.}
    \label{figure-pizza-slice-out-traj}
\end{figure}

We consider the admissible quadruple associated to the
fibration $f_{\alpha}: \Theta_a \to \SP^1$, where $\alpha
= m\hGamma - n\hgamma$ with $m < 0$ and $n \in \Z$.
Following Remark~\ref{rem-bp-of-multiple-classes-defn}
we see that if $(X_0, X_1, Y_0, Y_1)$ is an admissible
quadruple for $\alpha$, then $(X_1, X_0, Y_0, Y_1)$ is
an admissible quadruple for $-\alpha$. Thus we may take
the admissible quadruple associated to $-m\hGamma + n\hgamma$
as constructed in \eqref{eqn-admissible-quadruple} and
interchange $X_0$ and $X_1$ to obtain an admissible
quadruple associated to $m\hGamma - n\hgamma$.

Following the construction in part (A), we find a
Hamiltonian $G: \mathcal{S} \to \R$ that satisfies
$G \equiv 0$ on $\gamma_{4m} \cup \gamma_4$ and
$G \equiv 1$ on $\gamma_2$ and consider its chords
from $\gamma_3$ to $\gamma_1$, see Figure~\ref{figure-pizza-slice-out-traj}.
In our sequence of symplectic neighborhoods we have $dx \wedge dy(
D(k)/D(a)) \to +\infty$. Hence we can choose for
all $k > a$ a sequence of Hamiltonians $G_k$ for which
the time-length $T$ of chords from $\gamma_3$ to $\gamma_1$
goes to $+\infty$. Repeating the construction as in
part (A) then gives us a sequence of Hamiltonians $H_k$
in $\C^2$ with the desired properties.
\Qed


\section{Lagrangian tori in $\C^n$ -- proofs}


\subsection{Basic properties of $\bp_\bx$, $\df_\bx$ for split Lagrangian tori in $\C^n$}
\label{sec-prop-basic-properties-of-pbL-for-split-tori-in-R2n}

\begin{prop}
\label{prop-basic-properties-of-pbL-for-split-tori-in-R2n} \

\medskip
\noindent
(A) {\bf Permutation invariance}: Let $\bx_\sigma\in\R^n$, $\alpha_\sigma
\in (\R^n)^*$ be the vectors obtained, respectively, from $\bx \in \R^n$,
$\alpha \in (\R^n)^*$ by a permutation $\sigma$ of the coordinates. Then
for any $\alpha$
\begin{equation}
\label{eqn-permutation-of-factors-in-split-torus}
\df_{\bx_\sigma} (\alpha_\sigma) = \df_{\bx} (\alpha), \ \
\bp_{\bx_\sigma} (\alpha_\sigma) = \bp_{\bx} (\alpha).
\end{equation}

\noindent
(B) {\bf Homogeneity in $\bx$}:
for any $c>0$
\[
\df_{c\bx} = \frac{1}{c} \df_{\bx},\ \
\bp_{c\bx} = \frac{1}{c} \bp_{\bx}.
\]

\noindent
(C) {\bf Semi-continuity with respect to $\bx$}:
\[
\{ \bx_i \} \to \bx\ \Longrightarrow\ \forall \, \alpha\in (\Z^n)^*:
\ \bp_{\bx} (\alpha) \leq \liminf\limits_{\bx_i} \bp_{\bx_i}(\alpha),
\]
\[
\{ \bx_i \} \to \bx\ \Longrightarrow\ \forall \, \alpha\in (\R^n)^*:
\ \df_{\bx} (\alpha) \leq \liminf\limits_{\bx_i} \df_{\bx_i} (\alpha).
\]

\noindent
(D) {\bf The product property}: For any $\alpha$
\[
\df_{(\bx, x_{n+1},\ldots, x_N)} (\alpha,0,\ldots,0)\geq \df_{\bx}
(\alpha).
\]
\[
\bp_{(\bx, x_{n+1},\ldots, x_N)} (\alpha,0,\ldots,0)\geq \bp_{\bx}
(\alpha).
\]
\end{prop}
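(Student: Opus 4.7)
The plan is to derive each property from standard symplectic constructions that intertwine with the preferred $\T^n$-action parametrizations used to identify $H^1(T^n(\bx))$ with $(\R^n)^*$. The only subtlety throughout is the compatibility of cohomology identifications, which will be automatic from the $\T^n$-equivariance of all the maps involved.

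For part (A), I would use the symplectomorphism of $\C^n$ that permutes the complex coordinates according to $\sigma$. It maps $T^n(\bx)$ to $T^n(\bx_\sigma)$ and intertwines the Hamiltonian $\T^n$-actions via the corresponding permutation on $\T^n$, so under the preferred parametrizations it induces $\sigma$ on $H^1$. The symplectic invariance of $\df_L$ and $\bp_L$ then gives \eqref{eqn-permutation-of-factors-in-split-torus}.

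For part (B), the key tool is the conformal rescaling $\Phi_c(z) = \sqrt{c}\, z$ of $\C^n$, which satisfies $\Phi_c^*\omega = c\omega$, maps $T^n(\bx)$ onto $T^n(c\bx)$, and commutes with the standard $\T^n$-action. Thus $\Phi_c$ is a symplectomorphism from $(\C^n, c\omega)$ to $(\C^n, \omega)$ identifying $T^n(\bx)$ with $T^n(c\bx)$ while inducing the identity on $H^1$. Combining symplectic invariance with the elementary scaling behavior of the two invariants under $\omega \mapsto c\omega$ -- the Lagrangian flux is linear in $\omega$, while the Poisson bracket scales inversely, and both rescalings follow by an elementary reparametrization argument applied to the definitions of $\df_L$ and $pb_4^+$ -- yields the stated homogeneity.

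For part (C), both inequalities reduce directly to the semi-continuity statements \eqref{eqn-def-semi-cont} and \eqref{eqn-Hausdorff-convergence} of Section~\ref{subsec-pb4-generalities}. When $\bx_i \to \bx$ in $\R_{>0}^n$, the tori $T^n(\bx_i)$ converge smoothly (hence in $C^1$) to $T^n(\bx)$, and one may connect them by obvious $\T^n$-equivariant radial Lagrangian isotopies whose induced identifications on $H^1$ agree with those from the preferred parametrizations, so the cited semi-continuity properties apply verbatim. For part (D), the identification $T^n(\bx) \times T^{N-n}(x_{n+1},\ldots,x_N) = T^N(\bx, x_{n+1},\ldots,x_N)$ inside $\C^N = \C^n \times \C^{N-n}$ shows that the class $(\alpha, 0,\ldots,0)$ corresponds to the product class $\alpha \times g$ appearing in Proposition~\ref{prop-product-property-of-pbL}, which immediately gives the inequality for $\bp$. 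For $\df$ the analogous inequality follows by extending any Lagrangian isotopy $\{\psi_t\}$ of $T^n(\bx) \subset \C^n$ with flux $-t\alpha$ to the product isotopy $\{\psi_t \times \mathrm{id}\}$ of $T^N(\bx, x_{n+1},\ldots,x_N) \subset \C^N$, whose flux is $(-t\alpha, 0)$ in $H^1(T^n) \oplus H^1(T^{N-n}) \cong H^1(T^N)$. No step presents a serious obstacle.
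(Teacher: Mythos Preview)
Your proposal is correct and follows essentially the same route as the paper: permutation symplectomorphisms for (A), the conformal rescaling $z\mapsto\sqrt{c}\,z$ for (B), the general semi-continuity properties \eqref{eqn-def-semi-cont} and \eqref{eqn-Hausdorff-convergence} for (C), and the product property \eqref{eqn-pb-products} (equivalently Proposition~\ref{prop-product-property-of-pbL}) for the $\bp$-part of (D). Your treatment of (D) is in fact slightly more complete than the paper's, since you also supply the easy product-isotopy argument for the $\df$-inequality, which the paper's one-line proof does not spell out.
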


\begin{proof}

Claim (A) holds, since any split torus obtained from $T^n (\bx)
= T^1 (x_1)\times\ldots\times T^1 (x_n)$ by a permutation of the
$T^1$-factors is Hamiltonian isotopic to $T^n (\bx)$.

Claim (B) follows easily from the fact that $(p,q)\mapsto \sqrt{c}
(p,q)$, $c>0$, is a conformal symplectomorphism.

Claim (C) follows from the semi-continuity property of $\bp_L$
(see \eqref{eqn-Hausdorff-convergence}) and $\df_L$ (see \eqref{eqn-def-semi-cont}).

Claim (D) follows from the product property of $pb_4^+$ (see
\eqref{eqn-pb-products}).
\end{proof}


\subsection{Proof of Theorem~\ref{thm-lagr-tori-in-Cn}}
\label{sec-lagr-tori-in-Cn-proofs}

We now prove that for certain Lagrangian tori $L \subset \C^n$
and specific cohomology classes $\alpha \in H^1(L; \R)$ there
are upper bounds for the associated Lagrangian isotopies. This
proof follows the same route as in the proof of
Theorem~\ref{thm-lagrangian-tori-in-R4}. We indicate the changes
here and refer the reader to the proof of
Theorem~\ref{thm-lagrangian-tori-in-R4} for more details.

Consider a Lagrangian isotopy
\[
\psi = \{ \psi_t: L \to \C^n \}, \ 0 \leq t \leq T, \
\psi_0 = \iota,
\]
such that
\[
[\omega]^{\psi}_t = [\omega]_L - t \partial\alpha,
\]
with $L \subset \C^n$ and $\alpha$ as in the statement
of the theorem.

For simplicity we consider the dual picture. Namely, for
$0 \leq t \leq T$ there exists a family of compactly supported
diffeomorphisms $\varphi_t: \C^n \to \C^n$, $\varphi_0 = \mathrm{Id}$,
such that $\varphi_t(L) = \psi_t(L)$. By pulling back our symplectic
form $\varphi_t^*\omega =: \omega_t$ we may consider a fixed Lagrangian
$L \subset (\C^n, \omega_t)$. We have $H_2(\C^n, L) \simeq
\Z\langle A_1,\ldots, A_n \rangle$ and $\omega_t(A_1) =
a - \sigma t$ and $\omega_t(A_i) = b - \rho t$
for $i = 2, \ldots, n$.

Let $\cJ_t$, {$t\in [0,T]$}, be the space of almost complex structures on $\C^n$ compatible
with the symplectic form $\omega_t$ and let $C \in H_2(\C^n, L)$ be a relative
homology class. Choose a point $p \in L$. We define the moduli spaces
$\cM_1(C, J)$, $\cM_1(C, J, p)$ and $\cM_1(C, \{J_t \}, p)$ as in the proof of
Theorem~\ref{thm-lagrangian-tori-in-R4}. An almost complex structure $J \in \cJ_t$
is called {\it regular}, if $\cM_1(C, J)$ is a (transversally cut out) smooth manifold of dimension
\[
\dim \cM_1(C, J) = \dim L + \mu(C) - 2,
\]
and {\it regular with respect to $p$}, if
the space $\cM_1(C, J, p)$ is a (transversally cut out) smooth
manifold of dimension
\[
\dim \cM_1(C, J, p) = \mu(C) - 2.
\]

Further on we will assume $n = \dim L$ is even -- the arguments for $n$ odd are similar.

Let $S\in (0,T]$. We will say that a family $\{ J_t \}$, $0 \leq t \leq S$, $J_t \in \cJ_t$, is
{\it regular with respect to $p$} if
\begin{enumerate}
\item[(1)] for any $t \in [0, S]$ the spaces $\cM_1(C, J_t)$ are empty for all $C$
with $\mu(C) < 2 - n$,
\item[(2)] $\cM_1(C, \{J_t \}, p)$ is a (transversally cut out) smooth manifold of dimension
\[
\dim \cM_1(C, \{J_t \}, p) = \dim \cM_1(C, J, p) + 1 = \mu(C) - 1
\]
with boundary $\cM_1(C, J_0, p) \cup \cM_1(C, J_S, p)$.
\end{enumerate}

Again, by the same standard regularity and transversality arguments, used in the proof of
Theorem~\ref{thm-lagrangian-tori-in-R4}, that for any $J_0 \in \cJ_0$ and
${J_S} \in {\cJ_S}$, that are regular with respect to $p$, a generic family $\{ J_t \}$,
$0 \leq t \leq S$, $J_t \in \cJ_t$, connecting $J_0$ and $J_S$ satisfies (2).
In order to show that condition (1) also holds for a generic family $\{ J_t \}$ note
that since $L$ is orientable,
the Maslov index of any disk is even and therefore, since $\dim L$ is even,
$\dim \cM_1(C, J)$ is even. Thus, if $\mu(C) < 2 - n$, then
$\mathrm{virtual} \dim \cM_1(C, J) \leq -2$ for any $J \in \cJ_t$ and $t \in [0,{S}]$, meaning that
the existence of \emph{somewhere injective} $J_t$-holomorphic disks of such
Maslov indices is a codimension-2 phenomenon and can be avoided by a
generic choice of $\{ J_t \}$.

We remark the following difference to the proof of Theorem~\ref{thm-lagrangian-tori-in-R4}.
For a general $C \in H_2(\C^n, L)$ with $\mu(C) < 2-n$, we cannot a priori exclude
the existence of {\it not somewhere injective} $J_t$-holomorphic disks in the class $C$.
However, we will show now that {\it for $t \in [0, a/\sigma)$} all $J_t$-holomorphic disks
of Maslov index lying in $[2-n,0]$, somewhere injective or not, can be excluded using our
assumptions on $A_1,\ldots, A_n$ and $\alpha$.

Set $a(t) := a - \sigma t$ and $b(t) := b - \rho t$.

\begin{lemma}
\label{lem-lagr-tori-in-Cn-no-Maslov-minus-2l-disks-of-area-smaller-than-a-t}
For $t \in [0, a/\sigma)$ there is no
$J_t$-holomorphic
disk with boundary in $L$
with Maslov index equal to $-2l$ for $l \in \{ 0, 1, \ldots, (n-2)/2 \}$ that has positive
symplectic area smaller than $\omega_t(A_1) = a(t)$.
\end{lemma}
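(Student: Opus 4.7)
The plan is to reduce the claim to an arithmetic statement on the lattice $H_2(\C^n, L) \cong \Z\langle A_1, \ldots, A_n \rangle$, using only that the Maslov index and the $\omega_t$-area of any $J_t$-holomorphic disk depend on its relative homology class alone, and that the area is positive for non-constant disks since $J_t$ is $\omega_t$-compatible. This reduction automatically handles multiply covered disks and requires no regularity assumption on $J_t$.

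First I would show that the ratio $r(t) := b(t)/a(t)$ is non-decreasing on $[0, a/\sigma)$. A direct computation gives
\[
\frac{d}{dt}\frac{b - \rho t}{a - \sigma t} = \frac{\sigma b - \rho a}{(a - \sigma t)^2},
\]
which is non-negative by the hypotheses $b/a \geq (n+2)/2 \geq \rho/\sigma$. Consequently $r(t) \geq b/a \geq (n+2)/2$ throughout the relevant time range, and in particular $b(t) > a(t) > 0$.

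Next I would perform the arithmetic reduction. Writing any relative class as $C = m A_1 + k_2 A_2 + \ldots + k_n A_n$ and setting $s := k_2 + \ldots + k_n$, one has $\mu(C) = 2(m+s)$ and $\omega_t(C) = m\, a(t) + s\, b(t)$; the Maslov condition $\mu(C) = -2l$ then forces $s = -l - m$. Dividing the prospective inequality $0 < \omega_t(C) < a(t)$ by $a(t)$ and rewriting in terms of $r = r(t)$, the question becomes whether there exists an integer $m$ in the open interval
\[
\Bigl( -l - \tfrac{1+l}{r-1},\; -l - \tfrac{l}{r-1} \Bigr),
\]
which has length $1/(r-1)$. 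Using $l \leq (n-2)/2$ and $r \geq (n+2)/2$, one verifies $l/(r-1) < 1$ and $(1+l)/(r-1) \leq 1$, so this interval sits inside $[-l-1,\, -l)$, with $-l$ and $-l-1$ as the only integer candidates; both fall on or outside the open boundary. Hence no such class $C$ exists, and a fortiori no such $J_t$-holomorphic disk.

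The delicate point I expect is the extremal case $l = (n-2)/2$ and $r(t) = (n+2)/2$, in which the left endpoint of the forbidden interval lands exactly on the integer $-l-1$; what saves the argument is the openness of the interval, coming from the strict inequality $0 < \omega_t(u)$. As a sanity check, the case $n = 2$ forces $l = 0$ and the resulting statement recovers Lemma~\ref{lem-lagr-tori-in-R4-no-non-const-maslov-0-disks}.
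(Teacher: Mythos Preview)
Your proof is correct and follows essentially the same arithmetic reduction as the paper's own proof: both observe that the class of such a disk is determined, modulo one integer parameter, by the Maslov condition $\mu(C) = -2l$, and then use the constraints $\rho/\sigma \leq (n+2)/2 \leq b/a$ to pin down that integer. The paper parametrizes by $k := -m$, writes $\omega_t(D) = k(b(t)-a(t)) - l\,b(t)$, and shows directly that $\omega_t(D) \geq a(t)$; you instead divide through by $a(t)$, work with the ratio $r(t) = b(t)/a(t)$, and show that no integer $m$ lies in the resulting open interval. Your monotonicity computation for $r(t)$ is a clean way to see that $r(t) \geq (n+2)/2$ throughout $[0,a/\sigma)$; the paper obtains the equivalent inequality $b(t) \geq \tfrac{n+2}{2}\,a(t)$ by noting that $b(t) - \tfrac{n+2}{2}\,a(t)$ is a linear function of $t$ with non-negative value at $t=0$ and non-negative slope, which is the same fact unpacked.
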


\bigskip
\noindent
{\bf Proof of Lemma~\ref{lem-lagr-tori-in-Cn-no-Maslov-minus-2l-disks-of-area-smaller-than-a-t}:}
Let $t \in [0, a/\sigma)$.
For $l \in \{ 0, 1, \ldots, (n-2)/2 \}$ let
$D \in H_2(\C^n, L)$ be a class such that $\mu(D) = -2l$ and
$D$ has positive symplectic area.

A brief calculation shows that $b(t) - a(t) > 0$ and $a(t)>0$, since $t \in [0, a/\sigma)$ and
$\rho \leq \tfrac{\sigma (n+2)}{2} \leq \tfrac{b \sigma}{a}$. Hence, we also have that $b(t)>0$.

A calculation of the symplectic area of $D$
reveals that, since $b(t) > 0$, $b(t)-a(t)>0$ and $\omega_t(D) >0$, for each $t \in [0, a/\sigma)$ we have
\[
\omega_t(D) = k(b(t) - a(t)) - lb(t)
\]
for some $k \in \Z$ such that
\[
k > \frac{lb(t)}{b(t) - a(t)} > l,
\]
where the last inequality actually means that
\[
k\geq l+1,
\]
because $k,l\in\Z$.

Hence,
\begin{equation}\label{eq-ineq-sympl-area-maslov-2l-disks}
\omega_t(D)\geq (l+1)(b(t) - a(t)) - lb(t) = b(t) - (l+1)a(t) \geq a(t).
\end{equation}
Here the last inequality in \eqref{eq-ineq-sympl-area-maslov-2l-disks} can be deduced from the assumptions $l \in \{ 0, 1, \ldots, (n-2)/2 \}$,
$\rho/\sigma\leq (n+2) / 2\leq b/a$.
Inequality \eqref{eq-ineq-sympl-area-maslov-2l-disks}
implies the lemma. \Qed

Continuing with the proof of the theorem, assume $0<S < a/\sigma$.
We will now show that for any almost complex structures
$J_0 \in \cJ_0$ and $J_S \in \cJ_s$ regular with respect to $p$
and any family $\{J_t\}$,
$0 \leq t \leq S$, $J_t \in \cJ_t$, that connects $J_0$ and $J_S$ and is regular with respect to $p$,
the moduli space $\cM_1(A_1, \{J_t\}, p)$
is compact and hence, in particular, the moduli spaces $\cM_1(A_1, J_0, p)$
and $\cM_1(A_1, J_S, p)$ are compact.

As in the proof of Theorem~\ref{thm-lagrangian-tori-in-R4} it suffices
to consider a sequence $\{ D_i \}$ in $\cM_1(A_1, J_{t_i}, p)$ with
$\{ t_i \} \to s\in [0,S]$
{and show that it has a subsequence converging to an element of $\cM_1(A_1, J_s, p)$}.
By Gromov compactness (see \cite{Gromov}), since $\pi_2 (\C^n)=0$, the sequence $\{ D_i \}$
has a subsequence converging to a bubbling configuration of non-constant
non-parameterized $J_s$-holomorphic disks $\cD_1, \ldots, \cD_l$ with
boundary in $L$ whose homology classes $[\cD_1], \ldots, [\cD_l]$
add up to $A_1$:
\[
[\cD_1] + \ldots + [\cD_l] = A_1.
\]
By the result of Kwon-Oh \cite{KO}, each disk $\cD_i$, viewed as a subset
of $\C^n$, is a finite union of non-parameterized somewhere injective
$J_s$-holomorphic disks $\cD_i^{(1)}, \ldots, \cD_i^{(i_n)}$ so that
\[
\mu(\cD_i) = k_1 \mu(\cD_i^{(1)}) + \ldots + k_{i_n} \mu(\cD_i^{(i_n)})
\]
where the coefficients $k_i$ are positive integers.
Now $0 < \omega_s([\cD_i]) \leq \omega_s(A_1)$ for all $i = 1, \ldots, l$ and
this implies that $0 < \omega_s([\cD_i^{(j)}]) \leq \omega_s(A_1)$ for all
somewhere injective disks.
By {Lemma~\ref{lem-lagr-tori-in-Cn-no-Maslov-minus-2l-disks-of-area-smaller-than-a-t}},
this means that $\mu(\cD_i^{(j)}) \notin \{ 2 -n, \ldots, -2, 0\}$.
Since the family $\{ J_t \}$ is regular with respect to $p$, somewhere
injective $J_s$-holomorphic
disks of Maslov index $< 2 - n$ do not exist. Putting everything
together we conclude that
for all $i,j$ $\mu(\cD_i^{(j)}) >0$,
which, in fact, means that
 $\mu(\cD_i^{(j)}) \leq 2$ (recall that $L$ is orientable and therefore the Maslov indices of disks with boundary in $L$ are all even). Therefore
$A_1 = [\cD_1]$, the disk $\cD_1$ lies
in $\cM_1(A_1, J_s, p)$ and the sequence $\{ D_i \}$ has a subsequence converging to an element of
$\cM_1(A_1, J_s, p)$. This finishes the proof that the smooth
manifold $\cM_1(A_1, \{J_t\}, p)$ is compact.

Thus for any $J_0\in\cJ_0$, $J_S\in\cJ_S$ that are regular with respect to $p$ and a family
$\{ J_t \}_{0\leq t\leq S}$ that connects $J_0$ and $J_S$ and is regular with respect to $p$
the moduli space $\cM_1(A_1, \{J_t\}, p)$ is a compact
smooth 1-dimensional cobordism between the compact 0-dimensional
manifolds $\cM_1(A_1, J_0, p)$ and $\cM_1(A_1, J_S, p)$, as long as $0<S<a/\sigma$.
(Note the difference with the proof of Theorem~\ref{thm-lagrangian-tori-in-R4} where a similar
claim was proved for {\it any} $S$ such that $\omega_t$ is symplectic for all $t\in [0,S]$).
This implies that $n_{A_1}(J_0, p) = n_{A_1}(J_S, p)$.

In a similar way one can show that given a (regular) path $\gamma (s)$, $0\leq s\leq 1$, in $L$,
$t\in [0,S]$
and a family $\{ J_s\}\subset \cJ_t$ regular with respect to $\gamma$, the moduli space
$\cM_1 (A, \{ J_s\}, \gamma)$ is a compact smooth
1-dimensional cobordism between the compact 0-dimensional manifolds $\cM_1 (A,J_0,\gamma (0))$ and
$\cM_1 (A,J_1,\gamma (1))$, and therefore $n_{A_1}(J_0, \gamma (0)) = n_{A_1}(J_1, \gamma (1))$. The latter claim proves part (A)
of the theorem for $n$ even. The case for $n$ odd is similar.

In order to prove part (B) we need to show that $T\leq a/\sigma$. Let us assume by contradiction that
$a/\sigma<T$.
As we have shown above, $n_{A_1}(J_0, p) = n_{A_1}(J_S, p)$ for any $S\in (0,a/\sigma)$.
Thus if $n_{A_1}(J_0, p)$ is non-zero, then so is $n_{A_1}(J_S, p)$.
This implies that for every $S < a/\sigma$ there exists a $J_S$-holomorphic
disk in $\C^n$ with boundary on $L$ in the relative homology class $A_1$.
Passing to the limit $S\to a/\sigma$ and applying again the Gromov compactness, we see that there must exist a configuration of $J_{a/\sigma}$-holomorphic disks whose total homology class is $A_1$ (and hence not all disks in the configuration are constant) and whose total $\omega_{a/\sigma}$-area is $\omega_{a/\sigma} (A_1) =0$, which is impossible since the area of each non-constant $J_{a/\sigma}$-holomorphic disk has to be positive.
Hence, we obtain a contradiction and this proves (B).
\Qed

\begin{rem}
{\rm We remark that the proof of Theorem~\ref{thm-lagr-tori-in-Cn}
does not generalize to the settings where $\omega (A_i)\neq \omega(A_j)$ for some $2\leq i,j\leq n$, $i\neq j$.
The reason
for this comes from the fact that if in the Lagrangian isotopy
there exist two Maslov-0 disks with positive symplectic areas that
are rationally independent (e.g. $A_2 - A_1$ and $A_3 - A_2$),
then one can always find a Maslov-0 disk with positive symplectic area
arbitrarily close to 0. Hence one can not exclude Maslov-0 disks
from any bubbling configuration.}
\end{rem}


\subsection{Proof of Theorem~\ref{thm-computations-for-split-tori-in-Cn}}
\label{sec-pf-thm-computations-for-split-tori-in-Cn}

\noindent
{\bf Proof of part (A).} The tori $T^n (\bx)$ are exactly the regular
orbits of the standard Hamiltonian $\T^n$-action on $\C^n$ and the
regular level sets of its moment map $\Phi :\C^n\to (\R^n)^*$ whose image is
the non-negative quadrant $\Delta\subset(\R^n)^*$.

For $\alpha=(m_1, \ldots, m_n)$ one readily sees that
$\cI(\bx,\alpha)$, which is the open part of the intersection of the
ray $\bx - t \alpha$, $0<t<+\infty$, and $\Delta$, is an infinite ray
if and only if all $m_i$ are non-positive. Otherwise, $\cI(\bx,\alpha)$
is an interval and its rational length is given by $\min_{i, m_i>0}\ x_i/
m_i$. By Theorem~\ref{thm-toric-case-low-bound}, this proves part
(A) of the theorem. \Qed

\bigskip
\noindent {\bf Proof of part (B).}
The upper bound $\df_\bx (k,\ldots,k)\leq x/k$ for $k \in \N$ follows from
Theorem~\ref{thm-partial-alpha-proportional-to-omega-upp-bound-on-bpL-alpha},
since $\C^n$ does not admit weakly exact closed Lagrangian submanifolds
by a famous result of Gromov \cite{Gromov}.

The lower bound $\bp_\bx (k,\ldots,k)\geq x/k$ follows immediately from
part (A) of the theorem. \Qed

\bigskip
\noindent {\bf Proof of part (C).}
Let $p_1,\ldots,p_n,q_1,\ldots,q_n$ be the standard Darboux coordinates
on $\R^{2n}=\C^n$ so that $z_j=p_j+i q_j$, $j=1,\ldots,n$.

According to the assumption, $x_i/k > x_{min}$, that is, $x_i/k > x_j$ for
some $j \neq i$. By \eqref{eqn-permutation-of-factors-in-split-torus},
we may assume without loss of generality, that $i=n, j=1$, that is,
$x_n/k > x_1$. Let us show that $\bp_{\bx} (k e_n) = +\infty$.

Observe that a circle bounding a round disk of a certain area in $\R^2$
can be mapped by an area-preserving map arbitrarily close to the boundary
of a square of the same area. Together with the semi-continuity, product
and symplectic invariance properties of $pb^+_4$ (see
Section~\ref{subsec-pb4-generalities}) this easily implies that it is enough
to prove
\begin{equation}
\label{eqn-pb4+-zero-for-four-parts-of-a-thin-torus}
pb_4^+ (X_0,X_1,Y_0,Y_1) = 0,
\end{equation}
where the admissible quadruple $X_0,X_1,Y_0,Y_1$ is defined as
follows: for $i = 1, \ldots, n-1$
denote by $\Pi_i$ the boundary of the square $[0, \sqrt{x_i}]
\times [0, \sqrt{x_i}] \subset (\R^2(p_i, q_i), dp_i \wedge dq_i)$.
For $i = n$ consider the rectangle $[0, x_n] \times [0, 1]
\subset (\R^2(p_n, q_n), dp_n \wedge dq_n)$. Now choose
$\varepsilon > 0$ such that
\begin{equation}\label{eq-ineq-pb4-nonprim-classes-split-tori}
\frac{x_n}{k} - 4 \varepsilon > x_1.
\end{equation}
As in the proof of Theorem~\ref{thm-pbL-surfaces} we choose
a partition of $[0, x_n] \times \{ 1 \}$ into $4k - 3$ intervals
$\gamma_1, \ldots, \gamma_{4k - 3}$, ordered from right to left,
such that for $i \equiv 0, 2, 3 \, \modd 4$ the intervals $\gamma_i$
have length $\varepsilon$, the interval $\gamma_1$ has length
$\tfrac{x_n}{k}$ and the remaining intervals have length
$\tfrac{x_n}{k} - 3\varepsilon$. Note that if $k = 1$ then
$\gamma_1 = [0, x_n] \times \{ 1 \}$. We define
\[
\hX_0 = [0, x_n]\times \{ 0 \} \cup \bigcup\limits_{i \equiv 3 \modd 4} \gamma_i,
\ \ \hX_1 = \bigcup\limits_{i \equiv 1 \modd 4} \gamma_i,
\]
\[
\hY_0 = [0,1] \times \{ 0 \} \cup \bigcup\limits_{i \equiv 2 \modd 4} \gamma_i,
\ \ \hY_1 = [0,1] \times \{ x_n \} \cup \bigcup\limits_{i \equiv 0 \modd 4} \gamma_i.
\]
We then set
\begin{align*}
X_0 & := \Pi_1 \times \ldots \times \Pi_{n-1} \times \hX_0, \\
X_1 & := \Pi_1 \times \ldots \times \Pi_{n-1} \times \hX_1, \\
Y_0 & := \Pi_1 \times \ldots \times \Pi_{n-1} \times \hY_0, \\
Y_1 & := \Pi_1 \times \ldots \times \Pi_{n-1} \times \hY_1,
\end{align*}
which gives us our admissible quadruple.

Now choose a constant $C$ that satisfies
\begin{equation}\label{eq-ineq-pb4-nonprim-classes-split-tori-nr2}
\left( \frac{x_n}{k} - 4 \varepsilon \right) \frac{1}{\sqrt{x_1}}
> C > \sqrt{x_1}.
\end{equation}
Such a constant exists by inequality~\eqref{eq-ineq-pb4-nonprim-classes-split-tori}.
On $\R^2(p_n, q_n)$ we define a piecewise-linear function
$G_{\varepsilon}: \R^2 \to \R$ that satisfies:
\begin{enumerate}
\item[(i)] $G_{\varepsilon} \equiv 0$ on $\widehat{Y}_0$ and $G_{\varepsilon} \equiv C$
on $\widehat{Y}_1$,
\item[(ii)] $G_{\varepsilon}$ only depends on $p_n$.
\end{enumerate}
One can choose $G_{\varepsilon}$ to satisfy $\partial_x G_{\varepsilon}
\leq C / (\tfrac{x_n}{k} - 3 \varepsilon)$ on $\pi(\hX_1) \times \R$, where
$\pi(p_n, q_n) = p_n$ is the projection. Therefore
there exists a smooth approximation $G: \R^2 \to \R$ that is complete,
satisfies (i) and (ii) and has slope $\partial_x G < C / (\tfrac{x_n}{k}
- 4 \varepsilon)$ on $\pi(\hX_1) \times \R$. This implies that all chords
of $G$ from $\widehat{X}_0$ to $\widehat{X}_1$ have time-length
$T > (\tfrac{x_n}{k} - 4\varepsilon)/ C$.

Now consider the Hamiltonian $H: \R^{2n} \to \R$ defined by
\[
H(p, q) := p_1 + G(p_n, q_n).
\]
$H$ is complete and satisfies
\[
\min_{Y_1}\ H = C, \ \ \max_{Y_0}\ H = \sqrt{x_1},
\]
and thus, by inequality~\eqref{eq-ineq-pb4-nonprim-classes-split-tori-nr2},
\[
\min_{Y_1}\ H -  \max_{Y_0}\ H > 0.
\]
Again by inequality~\eqref{eq-ineq-pb4-nonprim-classes-split-tori-nr2}
we have
\[
\left( \frac{x_n}{k} - 4 \varepsilon \right) \frac{1}{C}
> \sqrt{x_1}.
\]
Under the projection $\R^{2n} \to \R^2(p_n, q_n)$ the chords of $H$
map to the chords of $G$. All chords of $G$ from $\hX_0$ to $\hX_1$
have time-length $T > (\tfrac{x_n}{k} - 4\varepsilon)/ C$. Now under
the projection $\R^{2n} \to \R^2(p_1, q_1)$ the chords of $H$ map
to vertical lines. We see that for any $T > \sqrt{x_1}$ the image of
$\Pi_1 \subset \R^2(p_1, q_1)$ under the Hamiltonian flow at time $T$
does not intersect $\Pi_1$. Thus there are no chords of $H$ from
$X_0$ to $X_1$, which, by the dynamical characterization
of $pb_4^+$, proves \eqref{eqn-pb4+-zero-for-four-parts-of-a-thin-torus} and part (C).
\Qed


\section{Lagrangian tori in $\C P^n$ and $\SP^2 \times \SP^2$ -- proofs}

We now prove that for Lagrangian tori $L$ in $\C P^n$ or
$\SP^2 \times \SP^2$ and certain cohomology classes
$\alpha \in H^1(L; \R)$ there are upper bounds for
$\df_L(\alpha)$.


\subsection{Proof of Theorem~\ref{thm-tori-in-CPn}}
\label{sec-pf-thm-cpn-toric-fiber-upp-bound}

Consider a Lagrangian isotopy
\[
\psi=\{ \psi_t : L \to \C P^n\}, \ 0\leq t\leq T, \ \psi_0=\iota,
\]
such that
\[
[\omega]^\psi_t = [\omega]_{L} - t \partial\alpha.
\]
By contradiction assume $T > C$ and consider $L_C := \psi_C(L)$.
By the assumption of the theorem, we have $[\omega]^\psi_C \in
H^2(\C P^n, L; \frac{1}{n} \Z)$ and
therefore $\omega(H^2(\C P^n, L_C)) \subset \frac{1}{n}\Z$. Since
$L$ is a torus the group $\omega(H^2(\C P^n, L_C))$ has $n + 1$
generators $y_1, \ldots, y_n, 1$. Denote by $c\in [0,+\infty) \cap
\frac{1}{n}\Z$ its positive generator. However, by a theorem of
Cieliebak and Mohnke \cite{Ciel-Mohnke},
\[
c\leq \frac{1}{n+1}.
\]
We obtain a contradiction and therefore $T < C$.
\Qed


\subsection{Proof of Theorem~\ref{thm-upp-bound-bpL-S2timesS2}}
\label{sec-pf-thm-S2-times-S2-upp-bound}

Consider a Lagrangian isotopy
\[
\psi=\{ \psi_t : L \to \SP^2 \times \SP^2\}, \ 0\leq t\leq T, \ \psi_0=\iota,
\]
such that
\[
[\omega]^\psi_t = [\omega]_{L} - t \partial\alpha.
\]
By contradiction, assume $T > C$ and consider $L_C := \psi_C(L)$.
By the assumption of the theorem we have
$[\omega]^\psi_C \in H^2(\SP^2 \times \SP^2, L; \Z)$ and
therefore $\omega(H^2(\SP^2 \times \SP^2, L_C)) \subset \Z$. Therefore
the group $\omega(H^2(\SP^2 \times \SP^2, L_C))$
is generated by $1$. However, by a theorem of
Dimitroglou Rizell, Goodmann and Ivrii (\cite{DRizell-Goodman-Ivrii},
Proposition 5.3), for any torus $L \subset \SP^2 \times \SP^2$
the positive generator $c$ of $\omega(H^2(\SP^2 \times \SP^2, L))$ satisfies
\[
c\leq \frac{1}{2}.
\]
We obtain a contradiction and therefore $T < C$.
\Qed

\bigskip
\noindent
{\bf Acknowledgments.}
We are very grateful to Leonid Polterovich for enlightening discussions,
stimulating questions and crucial comments and corrections. We thank
Emmanuel Opshtein for useful discussions and for suggesting to us the
original idea of the proof of Theorem~\ref{thm-lagrangian-tori-in-R4}.
We thank Paul Biran, Georgios Dimitroglou Rizell and Yakov Eliashberg for
useful conversations and questions related to our work.
Finally, we thank the anonymous referee for pointing out
mistakes in the first version of the paper.

\bibliographystyle{alpha}

\begin{thebibliography}{99}

\bibitem{Atiyah} Atiyah, M. F., \emph{Convexity and commuting Hamiltonians}, Bull. LMS \textbf{14} (1982), no. 1, 1-15.

\bibitem{ALP-book} Audin, M., Lalonde, F., Polterovich, L.,
\emph{Symplectic rigidity: Lagrangian submanifolds}, in \emph{Holomorphic curves in symplectic geometry, 271-321},
Progr. Math., \textbf{117}, Birkh\"auser, Basel, 1994.

\bibitem{Audin-toric-book-2nd-ed} Audin, M., Torus actions on symplectic manifolds. 2nd edition. Birkhäuser, 2004.

\bibitem{BEP} Buhovsky, L., Entov, M., Polterovich, L., \emph{Poisson brackets and symplectic invariants},
Selecta Math. \textbf{18} (2012), 89-157.

\bibitem{Chekanov-tori-in-R2n} Chekanov, Y., \emph{Lagrangian tori in a symplectic vector space and global symplectomorphisms}, Math. Z. \textbf{223} (1996), 547-559.

\bibitem{Chekanov-Schlenk} Chekanov, Y., Schlenk, F., \emph{Notes on monotone Lagrangian twist tori},
Electron. Res. Announc. Math. Sci. \textbf{17} (2010), 104-121.

\bibitem{Ciel-Mohnke} Cieliebak, K., Mohnke, K., \emph{Punctured holomorphic curves and Lagrangian embeddings}, preprint, arXiv:1411.1870, 2014.

\bibitem{Cho-Oh} Cho, C.-H., Oh Y.-G., \emph{Floer cohomology and disc instantons of Lagrangian torus fibers in Fano toric manifolds}, Asian J. Math.
\textbf{10} (2006), 773-814.


\bibitem{Delzant} Delzant, T., \emph{Hamiltoniens p\'eriodiques et images convexes de l'application moment},
Bull. Soc. Math. France \textbf{116} (1988), 315-339.

\bibitem{DRizell-Goodman-Ivrii} Dimitroglou Rizell, G., Goodman E., Ivrii A., \emph{Lagrangian isotopy of tori in
$S^2 \times S^2$ and $\mathbb{C}P^2$}, Geom. and Funct. Analysis \textbf{26} (2016), 1297-1358.

\bibitem{Eliashberg} Eliashberg, Y., \emph{New invariants of open symplectic and contact manifolds}, J. Amer. Math. Soc. \textbf{4} (1991), 513-520.

\bibitem{Eliash-Gromov} Eliashberg, Y., Gromov, M., \emph{Convex symplectic manifolds}, in \emph{Several complex variables and complex geometry, Part 2 (Santa Cruz, CA, 1989), 135-162}, Proc. Sympos. Pure Math., \textbf{52}, Part 2, AMS, Providence, RI, 1991.

\bibitem{El-Polt-survey} Eliashberg, Y., Polterovich, L., \emph{The problem of Lagrangian knots in four-manifolds}, in \emph{Geometric topology (Athens, GA, 1993), 313-327}, AMS, Providence, RI (1997).

\bibitem{EP-tetragons} Entov, M., Polterovich, L., \emph{Lagrangian tetragons and instabilities in Hamiltonian dynamics}, Nonlinearity \textbf{30} (2017), 13-34.

\bibitem{Farrell} Farrell, F. T., \emph{The obstruction to fibering a manifold over a circle}, Indiana Univ. Math. J. \textbf{21} 1971/1972, 315-346.

\bibitem{Frauenfelder} Frauenfelder, U., \emph{Gromov convergence of pseudoholomorphic disks}, J. Fixed Point Theory Appl. \textbf{3} (2008), 215-271.

\bibitem{Gotay} Gotay, M.J., \emph{On coisotropic imbeddings of presymplectic manifolds}, Proc. Amer. Math. Soc. \textbf{84} (1982), 111-114.

\bibitem{Gordon}  Gordon, C., \emph{Orbifolds and their spectra}, in \emph{Spectral geometry, 49-71}, AMS, 2012.

\bibitem{Gromov} Gromov, M., \emph{Pseudoholomorphic curves in symplectic manifolds}, Invent. Math. \textbf{82} (1985), 307-347.

\bibitem{Guill-Sternb}  Guillemin, V., Sternberg, S., \emph{Convexity properties of the moment mapping}, Invent. Math. \textbf{67} (1982), 491-513.

\bibitem{Kinsey} Kinsey, L.C., \emph{Pseudoisotopies and submersions of a compact manifold to the circle}, Topology \textbf{26} (1987), 67-78.

\bibitem{KO} Kwon, D., Oh, Y.-G., \emph{Structure of the image of (pseudo)-holomorphic discs with totally real boundary condition},
Comm. Anal. Geom. \textbf{8} (2000), 31-82.

\bibitem{Latour} Latour, F., \emph{Existence de 1-formes ferm\'ees non singuli\`eres dans une classe de cohomologie de de Rham}, Inst. Hautes Études Sci. Publ. Math. No. \textbf{80} (1994), 135-194 (1995).

\bibitem{Laudenbach} Laudenbach, F., \emph{Submersions sur le cercle}, Bull. Soc. Math. France \textbf{104} (1976), 417-431.

\bibitem{Laudenbach-Blank} Laudenbach, F., Blank, S., \emph{Isotopie de formes fermées en dimension trois}, Invent. Math. \textbf{54} (1979), 103-177.

\bibitem{Lazz2000} Lazzarini, L., \emph{Existence of a somewhere injective pseudo-holomorphic disc}, Geom. Funct. Anal. \textbf{10} (2000), 829-862.

\bibitem{Lazz2011} Lazzarini, L., \emph{Relative frames on $J$-holomorphic curves}, J. Fixed Point Theory Appl. \textbf{9} (2011), 213-256.

\bibitem{Lerman-Tolman-TAMS97} Lerman, E., Tolman, S., \emph{Hamiltonian torus actions on symplectic orbifolds and toric varieties}, Trans. AMS \textbf{349} (1997), 4201-4230.

\bibitem{McD-Sal-psh-book} McDuff, D., Salamon, D., $J$-holomorphic curves and symplectic topology. 2nd edition. AMS, 2012.

\bibitem{Moser} Moser, J., \emph{On the volume elements on a manifold}, Trans. AMS \textbf{120} (1965), 286-294.

\bibitem{Oh-book} Oh, Y.-G., Symplectic topology and Floer homology. Cambridge Univ. Press, 2015.

\bibitem{Polterovich} Polterovich, L., \emph{Precise measurements in symplectic topology}, in \emph{European Congress of Mathematics, Vol. II (Budapest, 1996), 159-166}, Birkh\"auser, 1998.

\bibitem{Samvelyan} Samvelyan, K., \emph{Rigidity versus flexibility of the Poisson bracket with respect to the $L_p$-norm}, preprint, arXiv:1610.00675, 2016.

\bibitem{Sikorav} Sikorav, J.-C., \emph{Formes diff\'erentielles ferm\'ees non singuli\`eres sur le n-tore}, Comment. Math. Helv. \textbf{57} (1982), 79-106.

\bibitem{Sikorav2} Sikorav, J.-C., \emph{Rigidit\'e symplectique dans le cotangent de $\T^n$}, Duke Math. J. \textbf{59} (1989), 759-763.

\bibitem{Solomon} Solomon, J., \emph{The Calabi homomorphism, Lagrangian paths and special Lagrangians}, Math. Ann. \textbf{357} (2013), 1389-1424.

\bibitem{Thurston} Thurston, W.P., \emph{A norm for the homology of 3-manifolds}, Mem. Amer. Math. Soc. \textbf{59} (1986), no. 339, i-vi and 99-130.

\bibitem{YauML} Yau, M.-L., \emph{Monodromy and isotopy of monotone Lagrangian tori}, Math. Res. Lett. \textbf{16}, (2009), 531-541.

\end{thebibliography}

\bigskip
\medskip

\noindent Michael Entov \\
Department of Mathematics \\
Technion - Israel Institute of Technology \\
Haifa 32000, Israel \\
\texttt{entov@math.technion.ac.il} \\
\medskip

\noindent Yaniv Ganor \\
School of Mathematical Sciences \\
Tel Aviv University \\
Tel Aviv 69978, Israel \\
\texttt{yanivgan@post.tau.ac.il} \\
\medskip

\noindent Cedric Membrez \\
School of Mathematical Sciences \\
Tel Aviv University \\
Tel Aviv 69978, Israel \\
\texttt{cmembrez@post.tau.ac.il}  \\
\medskip

\end{document}